\newcommand{\mb}{\mathbf}
\newcommand{\mc}{\mathcal}
\renewcommand{\Re}{\mathrm{Re}}
\newtheorem{lemma}{Lemma}[section]
\newtheorem{proposition}[lemma]{Proposition}
\newtheorem{theorem}[lemma]{Theorem}
\newtheorem{corollary}[lemma]{Corollary}
\theoremstyle{remark}
\newtheorem{remark}[lemma]{Remark}
\theoremstyle{definition}
\newtheorem{definition}[lemma]{Definition}
\title{On stable self--similar blow up for equivariant wave maps: The linearized problem}
\author{Roland Donninger}
\address{University of Chicago, Department of Mathematics,
5734 South University Avenue, Chicago, IL 60637, U.S.A.}
\email{donninger@uchicago.edu}
\thanks{The first author 
is an Erwin Schr\"odinger Fellow of the 
FWF (Austrian Science Fund) Project No. J2843. He wants to thank Wilhelm Schlag for many interesting discussions and the ESI Vienna for hospitality during the workshop ``Quantitative studies of nonlinear wave phenomena''.}
\author{Birgit Sch\"orkhuber}
\address{Vienna University of Technology, Faculty of Mathematics and Geoinformation, Institute for Analysis and Scientific Computing,  Wiedner Hauptstr.~8, 1040 Wien, Austria}             
\email{schoerkhuber@asc.tuwien.ac.at}
\thanks{The second author acknowledges partial support from
the program ``Innovative Ideas'' of Vienna University of Technology;
the Austria-Croatia Project HR 10/2010;
the Austria-France Project FR 07/2010;
and the Austria-Spain Project ES 08/2010 of the Austrian Exchange Service
(\"OAD)}
\author{Peter C. Aichelburg}
\address{Universit\"at Wien, Fakult\"at f\"ur Physik, Gravitational Physics,
Boltzmanngasse 5, A-1090 Wien, Austria}
\email{aichelp8@univie.ac.at}
\thanks{The third author acknowledges partial support from the FUNDACION FEDERICO 
and the Erwin Schr\"odinger Institute for Mathematical Physics, Vienna, for
hosting a program on "Quantitative studies of nonlinear wave phenomena".}
\begin{document}

\begin{abstract}
We consider co--rotational wave maps from ($3+1$) Minkowski space into the three--sphere. This is an energy supercritical model which is known to exhibit finite time blow up via self--similar solutions. The ground state self--similar solution $f_0$ is known in closed form and based on numerics, it is supposed to describe the generic blow up behavior of the system. 
In this paper we develop a rigorous linear perturbation theory around $f_0$.
This is an indispensable prerequisite for the study of nonlinear stability of the self--similar blow up which is conducted in the companion paper \cite{wmnlin}.
In particular, we prove that $f_0$ is linearly stable if it is mode stable.
Furthermore, concerning the mode stability problem, we prove new results that exclude the existence of unstable eigenvalues with large imaginary parts and also, with real parts larger than $\frac{1}{2}$.
The remaining compact region is well--studied numerically and all available results strongly suggest the nonexistence of unstable modes.
\end{abstract}

\maketitle

\section{Introduction}
Let $(M,g)$ be a Lorentzian manifold and $(N,h)$ a Riemannian manifold with metrics $g$ and $h$, respectively.
Consider the geometric action functional
$$ S(u)=\int_M \mathrm{tr}_g (u^*h), $$
for a function $u: M \to N$, i.e., $S(u)$ is the integral over the trace (with respect to $g$) of the pullback $u^* h$.
In abstract index notation, this expression reads
$$ S(u)=\int_M g^{\mu \nu} (\partial_\mu u^a) (\partial_\nu u^b) h_{ab}\circ u $$
and \emph{formally}, in local coordinates $(x^\mu)$, the Euler--Lagrange equations associated to this action are given by
\begin{equation}
\label{eq:wm}
\Box_g u^a(x)+g^{\mu \nu}(x)\Gamma^a_{bc}(u(x))\partial_\mu u^b(x) \partial_\nu u^c(x)=0
\end{equation}
where Einstein's summation convention is in force. The indices $\mu,\nu$ and $a,b,c$ take the values $0,1,\dots,\dim M-1$ and $1,2,\dots,\dim N$, respectively, and $\Box_g$ denotes the Laplace--Beltrami operator on $M$.
Furthermore, $\Gamma^a_{bc}$ are the Christoffel symbols associated to the metric $h$ on the target manifold $N$.
Eq.~\eqref{eq:wm} is known as the \emph{wave maps equation} in the intrinsic formulation.
In order to develop some intuition, let us consider simple examples. 
If $N=\mathbb{R}^n$ with the standard Euclidean metric and $M$ is $(m+1)$--dimensional Minkowski space with coordinates $(t,x^1,x^2,\dots,x^m)$, then Eq.~\eqref{eq:wm} reduces to
$$ u^a_{tt}-\Delta_{\mathbb{R}^m}u^a=0, $$
the ordinary linear wave equation for a vector--valued function.
As a consequence, if the target is curved, the wave maps equation constitutes a nonlinear generalization of the wave equation and its geometric origin makes this generalization very natural.
Indeed, consider for instance $M=(0,1)$, an open interval, and $N$ any Riemannian manifold.
Then the corresponding wave maps equation reads
$$ \frac{d^2 u^a}{dt^2}(t)+\Gamma^a_{bc}(u(t))\frac{du^b}{dt}(t)\frac{du^c}{dt}(t)=0 $$
which is nothing but the geodesic equation on $N$. 
These simple examples already indicate that the wave maps functional is a rich source for interesting nonlinear equations.

In physics, wave maps first appeared in the 1960s as the SU(2) sigma model \cite{Gell-Mann} which corresponds to mappings from physical (that is, $(3+1)$--dimensional) Minkowski space to the three--sphere.
In fact, the present work deals exactly with this model subject to a certain symmetry reduction, see below.
Let us mention at least two more applications of wave maps in physics.
Due to its geometric nature, the wave maps equation is frequently used as a toy model for the much more involved Einstein equations of general relativity, in particular in the context of critical gravitational collapse. 
Furthermore, self--gravitating wave maps are used in general relativity to study the behavior of nonlinear matter fields coupled to Einstein's equations.
These works include the study of singularity formation (``hairy black holes'', ``naked singularities''), solitons with nonzero cosmological constant and, more recently, long--time asymptotics (``tails'') of global solutions (see e.g., \cite{clement-fabbri}, \cite{lechner-husa-pca}, \cite{bizonwm} and references therein). 
We also refer the reader to \cite{misner} for more (possible) applications of wave maps in physics.

From now on we restrict ourselves to wave maps on Minkowski space, i.e., we assume that $M=\mathbb{R}^{m+1}$ with the standard Minkowski metric $(g_{\mu \nu})=\mathrm{diag}(-1,1,\dots,1)$ and coordinates $(t,x^1,x^2,\dots,x^m)$.
In this case, Eq.~\eqref{eq:wm} reads
\begin{equation}
\label{eq:wm2} u^a_{tt}(t,x)-\Delta_{\mathbb{R}^m}u^a(t,x)=g^{\mu \nu}\Gamma^a_{bc}(u(t,x))\partial_\mu u^b(t,x)\partial_\nu u^c(t,x)
\end{equation}
and it is natural to consider the Cauchy problem, that is, one prescribes initial data $(u^a(0,\cdot),u_t^a(0,\cdot))$ and studies the future development.
The Cauchy problem for the wave maps equation has attracted a lot of interest in the past 20 years.
Eq.~\eqref{eq:wm2} is a system of semilinear wave equations and the standard theory immediately yields local well--posedness in the Sobolev space $H^s(\mathbb{R}^m) \times H^{s-1}(\mathbb{R}^m)$ provided that $s>\frac{m}{2}+1$, see e.g., \cite{klainerman-selberg}.
However, by exploiting the null structure of the wave maps nonlinearity, it is possible to improve this result to $s>\frac{m}{2}$ (``Klainerman--Machedon theory''), see the survey \cite{klainerman-selberg} and references therein. 
The wave maps equation \eqref{eq:wm2} is invariant under the scaling $u(t,x) \mapsto u_\lambda(t,x):=u(\lambda t,\lambda x)$ for a $\lambda>0$ and there exists 
a conserved energy given by
$$ E(u)=\frac{1}{2}\int_{\mathbb{R}^m} \left [h_{ab}(u(t,x))\partial_t u^a(t,x)\partial_t u^b(t,x)+h_{ab}(u(t,x))\sum_{i=1}^m \partial_i u^a(t,x)\partial_i u^b(t,x) \right ]dx $$
which scales like $E(u_\lambda)=\lambda^{2-m}E(u)$.
This shows that the wave maps equation is energy subcritical if $m=1$, critical if $m=2$ and supercritical if $m\geq 3$.
Based on a well--known heuristic principle one expects large data global well--posedness in the energy subcritical case and finite time blow up for energy supercritical equations.
Indeed, large data global well--posedness in the subcritical case has been proved in \cite{keel-tao}.
Also, small data global well--posedness in dimensions $m \geq 2$ for a large class of targets has been established in the last ten years, see e.g., \cite{tataru99}, \cite{tataru01}, \cite{tao01a}, \cite{tao01b}, \cite{klainerman-rodnianski02}, \cite{shatah-struwe02}, \cite{tataru05}, \cite{krieger03}, \cite{krieger04}, \cite{nahmod03}, \cite{nahmod02}.
We refer the reader to \cite{kriegersurv} for a survey on these results and a detailed list of references.
In the energy critical dimension $m=2$, the questions of large data global existence and blow up are related to the geometry of the target.
For spherical targets (and also more general surfaces of revolution), blow up solutions have been constructed in \cite{KST08}, \cite{rodnianski-sterbenz06} and, very recently, \cite{carstea}, \cite{rodnianski-raphael09}.
Again, we refer the reader to \cite{kriegersurv} for more references and earlier results related to this.
On the other hand, large data global well--posedness results for the energy critical case have been obtained in \cite{krieger-schlag09} as well as \cite{tataru-sterbenz09a}, \cite{tataru-sterbenz09b} and in the series \cite{tao09}.

In the present paper we study the energy supercritical case. More precisely, we consider wave maps from physical Minkowski space to the three--sphere, the original SU(2) sigma model of particle physics.
Furthermore, we impose a restrictive symmetry assumption: we require the wave map to be \emph{co--rotational}.
In order to explain what this means, choose standard spherical coordinates $(t,r,\theta,\varphi)$ on Minkowski space and hyperspherical coordinates $(\psi,\Theta,\Phi)$ on the three--sphere, i.e., the respective metrics are given by
$$ g=-dt^2+dr^2+r^2 (d\theta^2+\sin^2 \theta d\varphi^2) $$
and
$$ h=d\psi^2+\sin^2 \psi(d\Theta^2+\sin^2 \Theta d\Phi^2). $$
In these coordinates, a mapping $u: M \to N$ is described by the three functions $\psi(t,r,\theta,\varphi)$, $\Theta(t,r,\theta,\varphi)$ and $\Phi(t,r,\theta,\varphi)$.
The mapping $u$ is said to be co--rotational if $\Theta$ and $\Phi$ are trivial in the sense that $\Theta(t,r,\theta,\varphi)=\theta$, $\Phi(t,r,\theta,\varphi)=\varphi$ and $\psi$ is independent of $\theta$, $\varphi$.
The wave maps equation for co--rotational maps reduces to the single semilinear wave equation
\begin{equation}
\label{eq:main}
\psi_{tt}-\psi_{rr}-\frac{2}{r}\psi_r+\frac{\sin(2\psi)}{r^2}=0
\end{equation}
and requiring regularity (e.g., $C^2$) of $\psi$ at the origin yields the asymptotic behavior $\psi(t,r)=O(r)$ for any $t$ as $r \to 0+$ and in particular we have the boundary condition $\psi(t,0)=0$ for all $t$.
However, note carefully that smoothness of $\psi$ at $r=0$ does not imply that $\psi_r(t,0)$ has to vanish (observe the special cancellation by Taylor expansion).

A considerable amount of the wave maps literature is devoted to the study of the Cauchy problem in the presence of symmetries such as equivariance or spherical symmetry, see e.g., \cite{Chr1}, \cite{Chr2}, \cite{struwe03}, \cite{struwe04a}, \cite{struwe04b}, \cite{struwe99}, \cite{struwe01}, \cite{struwe98}. For the model 
under investigation, the papers \cite{sideris} and \cite{STZ94} are of particular interest.
In \cite{sideris}, small data global well--posedness in a sufficiently high Sobolev space is established whereas in \cite{STZ94}, many different aspects of the Cauchy problem are considered, in particular the minimal regularity requirements for local well--posedness.
It is well--known that Eq.~\eqref{eq:main} exhibits self--similar finite time blow up.
A solution $\psi^T$ of \eqref{eq:main} is said to be self--similar if it is of the form $\psi^T(t,r)=f(\frac{r}{T-t})$ for a constant $T>0$ and a smooth function $f$ \footnote{A priori, a self--similar solution is of the form $\psi(t,r)=f(\frac{r}{t})$ but one may immediately apply the time translation and reflection symmetries of the equation to obtain the one--parameter family $\psi^T(t,r)=f(\frac{r}{T-t})$.}.
The existence of a smooth self--similar solution $f_0$ for Eq.~\eqref{eq:main} has been proved in \cite{shatah88} by variational techniques and independently, it has been found in closed form \cite{TS90}, see also \cite{cazenave}.
We refer to $f_0$ as the \emph{fundamental} or \emph{ground state self--similar solution} and reserve the symbol $\psi^T$ for this solution, i.e., 
$$ \psi^T(t,r):=f_0\left (\tfrac{r}{T-t}\right )=2\arctan \left (\tfrac{r}{T-t} \right ). $$
By exploiting finite speed of propagation, $f_0$ can be used to construct a perfectly smooth solution of Eq.~\eqref{eq:main} with compactly supported initial data that breaks down at $t=T$.
However, a natural question to ask is how generic this break down is.
Does it only happen for very special initial data or is there an ``open'' set of data that lead to this type of self--similar blow up?
Numerical experiments \cite{bizon99} indicate that the latter is true, i.e., the blow up described by $f_0$ is conjectured to be stable.
More precisely, one observes that the future development of sufficiently large generic initial data converges to $\psi^T$ in the backward lightcone of the spacetime point $(T,0)$ as $t \to T-$.

It is worth mentioning that the blow up behavior of the corresponding energy critical model of equivariant wave maps on $(2+1)$ Minkowski space is fundamentally different.
Due to a result of Struwe \cite{struwe03}, it is known that the blow up in the energy critical case
cannot be self--similar.
In this respect it is interesting to note that
Struwe's result does not imply that blow up actually happens.
However, as already mentioned, blow up solutions for equivariant wave maps from (2+1) Minkowski space to the two--sphere have been constructed in \cite{KST08}, \cite{rodnianski-sterbenz06}, \cite{rodnianski-raphael09}.

\subsection{Outline of the paper}
In the present paper we develop a functional analytic linear perturbation theory around the self--similar solution $\psi^T$.
Since $\psi^T$ is time--dependent, we introduce adapted coordinates $(\tau,\rho)$ (``similarity variables'') such that $\psi^T$ becomes independent of the new time coordinate $\tau$.
The coordinates $(\tau,\rho)$ cover the backward lightcone of the blow up point $(t,r)=(T,0)$ which is mapped to $\tau=\infty$.
The linearization of Eq.~\eqref{eq:main} around the solution $\psi^T$ in the adapted coordinates is written as a system of the form
\begin{equation} 
\label{eq:intro}
\frac{d}{d\tau}\Phi(\tau)=L\Phi(\tau)  
\end{equation}
where $L$ is a linear spatial differential operator which is realized as an unbounded operator on a suitable Hilbert space.
However, the linearized operator $L$ is not self--adjoint (in fact, not even normal) and thus, the analysis of this equation is highly nontrivial since one cannot resort to standard methods from self--adjoint spectral theory.
Consequently, we apply semigroup theory to study the linearized time evolution of perturbations of $\psi^T$.
In Sec.~\ref{sec:ss}, we present a heuristic discussion that motivates the choice of the underlying Hilbert space.
Then, in Sec.~\ref{sec:mode}, we review known theoretical and numerical results on the point spectrum of $L$, that is, we consider mode solutions $\Phi(\tau)=e^{\lambda \tau}\mb{u}$ of Eq.~\eqref{eq:intro} and prove a new result that excludes unstable eigenvalues with real parts larger than $\frac{1}{2}$.
In Sec.~\ref{sec:wp}, we show that $L$ generates a strongly continuous one--parameter semigroup $S(\tau)$.
This yields the well--posedness of the Cauchy problem for Eq.~\eqref{eq:intro} and provides the rigorous spectral theoretic basis for the mode analysis in Sec.~\ref{sec:mode}.
It turns out that the time translation symmetry of the wave maps equation induces a single unstable eigenvalue in the spectrum of $L$.
This introduces an additional difficulty since the instability is ``artificial'' and one is actually interested in the flow ``modulo this instability''.
In order to make this precise, we construct an appropriate spectral projection that decomposes the underlying Hilbert space in a stable and an unstable part.
We show that the unstable subspace is spanned by the single eigenvector associated to the aforementioned unstable eigenvalue of the linearized operator $L$.
Finally, we prove an appropriate growth bound for the linear time evolution $S(\tau)$ restricted to the stable subspace which shows that $\psi^T$ is indeed linearly stable, provided that there are no unstable modes. As a consequence, we obtain a thorough understanding of the linear stability problem for $\psi^T$. 
Remarkably, we identify many features that are known from analogous self--adjoint problems.
As a by--product, we also show that unstable eigenvalues with large imaginary parts do not occur. This result justifies some of the numerical methods that have been employed to study mode stability of $\psi^T$, in particular the shooting method.
The results of this paper are of fundamental importance for the study of the full nonlinear stability of $\psi^T$ which is conducted in \cite{wmnlin}.

\subsection{Notations and conventions}
For Banach spaces $X,Y$ we denote by $\mc{B}(X,Y)$ the Banach space of bounded linear operators from $X$ to $Y$. As usual, we write $\mc{B}(X)$ if $X=Y$.
Vectors are denoted by bold letters and the individual components are numbered by lower indices, e.g., $\mb{u}=(u_1,u_2)$. We do not distinguish between row and column vectors.
Furthermore, we denote the spectrum and resolvent set of a linear operator $A$ by $\sigma(A)$ and $\rho(A)$, respectively.
In particular, we write $\sigma_p(A)$ for the point spectrum, i.e., the set of all eigenvalues.
For $\lambda \in \rho(A)$ 
we set $R_A(\lambda):=(\lambda-A)^{-1}$, i.e., $R_A$ is the resolvent of $A$.
Finally, for $a,b \in \mathbb{R}$ we use the notation $a \lesssim b$ if there exists a constant $c>0$ such that $a \leq cb$ and we write $a \simeq b$ if $a \lesssim b$ and $b \lesssim a$. 
The symbol $\sim$ is reserved for asymptotic equality.
Also, the letter $C$ (possibly with indices) denotes a generic nonnegative constant which is not supposed to have the same value at each occurrence.

\section{Self--similar blow up}
\label{sec:ss}
\label{sec:bunorm}
Our strategy is to treat the nonlinearity in Eq.~\eqref{eq:main} as a perturbation of a suitable ``free'' equation.
Note that the nonlinearity is singular at $r=0$ and thus, we have to split off the singular part first and assign it to the differential operator, i.e., the free equation.
In order to do so, we simply rewrite Eq.~\eqref{eq:main} as
\begin{equation} 
\label{eq:mainreg}
\psi_{tt}-\psi_{rr}-\frac{2}{r}\psi_r+\frac{2}{r^2}\psi+\frac{\sin(2\psi)-2\psi}{r^2}=0.  
\end{equation}
Formally, the nonlinear term $\frac{\sin(2\psi)-2\psi}{r^2}$ is now regular at $r=0$ since we assume $\psi(t,r)=O(r)$ as $r \to 0+$.
Our goal is to study Eq.~\eqref{eq:mainreg} in the backward lightcone 
$$\mc{C}_T:=\{(t,r): t\in (0,T), r \in [0,T-t]\}$$ 
of the blow up point $(t,r)=(T,0)$.
The conserved energy of the \emph{free equation}
\begin{equation}
 \label{eq:free}
\psi_{tt}-\psi_{rr}-\frac{2}{r}\psi_r+\frac{2}{r^2}\psi=0
\end{equation}
is given by
$$ \frac{1}{2}\int_0^\infty \left (r^2 \psi_t^2(t,r)+r^2 \psi_r^2(t,r)+2\psi^2(t,r) \right )dr $$
as follows immediately by multiplying the equation by $r^2 \psi_t$ and integrating by parts.
Naively one might expect this energy to yield a natural norm suitable for the study of the stability of $\psi^T$.
We give an argument why this is not the case.
Consider the local energy in the backward lightcone $\mc{C}_T$ given by
$$ E_\psi(t):=\frac{1}{2}\int_0^{T-t} \left (r^2 \psi_t^2(t,r)+r^2 \psi_r^2(t,r)+2\psi^2(t,r) \right )dr. $$
By energy conservation, one has the estimate $E_\psi(t) \lesssim 1$ and for a generic energy solution living on $\mc{C}_T$ this \emph{cannot} be improved to obtain polynomial decay of the form $E_\psi(t) \lesssim (T-t)^\gamma$ as $t \to T-$ for some constant $\gamma>0$. To see this, consider the function
$$\psi_\varepsilon(t,r)=(T-t)^{-\frac{1}{2}+\varepsilon}u_\varepsilon\left (\tfrac{r}{T-t} \right ) $$ where
$$ u_\varepsilon(\rho):=\frac{1-(\frac{1}{2}+\varepsilon)\rho}{\rho^2}(1+\rho)^{\frac{1}{2}+\varepsilon}
-\frac{1+(\frac{1}{2}+\varepsilon)\rho}{\rho^2}(1-\rho)^{\frac{1}{2}+\varepsilon} $$
for arbitrary $\varepsilon>0$
which
is a solution of the free equation inside the lightcone $\mc{C}_T$.
Its local energy satisfies $E_{\psi_\varepsilon}(t)\simeq (T-t)^{2\varepsilon}$ and, since $\varepsilon>0$ is arbitrary,
this shows that there cannot exist a $\gamma>0$ such that $E_\psi(t)\lesssim (T-t)^\gamma$ as $t \to T-$ for generic solutions $\psi$ of the free equation.
On the other hand, for the self--similar function $\psi^T$ we have
$E_{\psi^T}(t) \lesssim (T-t)$ and thus, \emph{the local energy of $\psi^T$ decays faster than the local energy of a generic free solution}.
This, of course, renders a perturbative approach hopeless from the very beginning.
As a consequence, the local energy does not yield a suitable norm for studying the blow up of $\psi^T$ because \emph{it does not blow up}.
Obviously, the same is true for the energy of the full nonlinear equation \eqref{eq:main}.
Thus, in order to make a perturbative approach feasible, we need to find a norm $\|\cdot\|$ which is derived from a conserved quantity of the free equation and
$\|(\psi^T(t,\cdot),\psi^T_t(t,\cdot))\| \to \infty$ as $t \to T-$.
The idea is obvious: we have to consider the gradient instead of the function itself since hitting the self--similar solution $\psi^T$ with $\partial_r$ brings out the singular factor $(T-t)^{-1}$.
Thus, we need a convenient equation for $\psi_r$.
However, differentiating Eq.~\eqref{eq:free} directly yields a true third--order equation for $\psi_r$ due to the term $\frac{2}{r^2}\psi$ which is not desirable.
Hence, we have to remove this term first by transforming to a new dependent variable $\tilde{\psi}$ of the form $\psi(t,r)=r^k \tilde{\psi}(t,r)$ for a suitable $k$.
We obtain
$$ \tilde{\psi}_{tt}-\tilde{\psi}_{rr}-\frac{2(k+1)}{r}\tilde{\psi}_r-\frac{(k-1)(k+2)}{r^2}\tilde{\psi}=0 $$
and this shows that there are two possibilities, either $k=1$ or $k=-2$. 
Let us investigate those two possible paths.
At first glance one might expect $k=1$ to be the more appealing choice since it leads to the radial $5$--dimensional wave equation
$$ \tilde{\psi}_{tt}-\tilde{\psi}_{rr}-\frac{4}{r}\tilde{\psi}_r=0 $$
and differentiating with respect to $r$ yields
$$ \tilde{\psi}_{rtt}-\tilde{\psi}_{rrr}-\frac{4}{r}\tilde{\psi}_{rr}+\frac{4}{r^2}\tilde{\psi}_r=0. $$ 
Multiplying by $r^4 \tilde{\psi}_{rt}$ and integrating by parts we obtain the conserved quantity
\begin{equation}
\label{eq:hienwrong} \frac{1}{2}\int_0^\infty \left (r^4 \tilde{\psi}_{rt}(t,r)^2+r^4 \tilde{\psi}_{rr}(t,r)^2+4 r^2 \tilde{\psi}_r(t,r)^2 \right )dr. 
\end{equation}
This suggests to use $\tilde{\psi}_t$ and $\tilde{\psi}_r$ as evolution variables.
However, there is a serious disadvantage of this choice: we do not have a boundary condition for $\tilde{\psi}(t,r)$ (recall that $\tilde{\psi}=\frac{\psi}{r}$ and $\psi(t,r)=O(r)$ as $r \to 0+$).
As a consequence, \emph{at a fixed time slice $t=\mathrm{const}$}, we cannot reconstruct the orginal field $\psi$ from the proposed evolution variables $\tilde{\psi}_t$, $\tilde{\psi}_r$.
This shows that the full equation \eqref{eq:main} cannot be written in terms of the variables $\tilde{\psi}_t$, $\tilde{\psi}_r$.
Another way to put this is to note that the prospective local energy ``norm'' derived from the quantity \eqref{eq:hienwrong} is in fact only a seminorm on the space of functions we are interested in which is too weak for our purposes.
Consequently, we dismiss the choice $k=1$ and follow the second possible path.

Choosing $k=-2$ we obtain the equation
$$ \tilde{\psi}_{tt}-\tilde{\psi}_{rr}+\frac{2}{r}\tilde{\psi}_r=0. $$
Note carefully that this is not a radial wave equation since the first order term has the wrong sign!
Differentiating with respect to $r$ yields
$$ \tilde{\psi}_{rtt}-\tilde{\psi}_{rrr}+\frac{2}{r}\tilde{\psi}_{rr}-\frac{2}{r^2}\tilde{\psi}_r=0. $$
In order to obtain a conserved quantity (``energy'') for this equation, one notes that 
$\tilde{\psi}_{rrr}-\frac{2}{r}\tilde{\psi}_{rr}=r^2 \partial_r (r^{-2}\tilde{\psi}_{rr})$, multiplies by $\frac{1}{r^2}\tilde{\psi}_{rt}$ and integrates by parts.
However, the resulting expression is not positive definite due to the wrong sign of the first--order term.
Thus, we make another transformation $\tilde{\psi}_r(t,r)=:r^k \hat{\psi}(t,r)$ to obtain
$$ \hat{\psi}_{tt}-\hat{\psi}_{rr}-2\frac{k-1}{r}\hat{\psi}_r-\frac{(k-1)(k-2)}{r^2}\hat{\psi}=0 $$
and by choosing $k=1$ we end up with the one--dimensional wave equation
$$ \hat{\psi}_{tt}-\hat{\psi}_{rr}=0 $$ 
on the half--line $r \geq 0$.
In terms of the original field $\psi$ we have
$$ \hat{\psi}(t,r)=r\psi_r(t,r)+2\psi(t,r) $$
and this implies the boundary condition $\hat{\psi}(t,0)=0$ for all $t$.
As a consequence, we obtain the conserved quantity
\begin{equation}
 \label{eq:enhi}
\frac{1}{2}\int_0^\infty \left (\hat{\psi}_t(t,r)^2+\hat{\psi}_r(t,r)^2 \right )dr 
\end{equation}
and this suggests to use $\hat{\psi}_{t}$ and $\hat{\psi}_r$ as evolution variables.
This is indeed possible now since the boundary condition $\hat{\psi}(t,0)=0$ allows us to express $\hat{\psi}$ in terms of $\hat{\psi}_r$ as
$$ \hat{\psi}(t,r)=\int_0^r \hat{\psi}_r(t,r')dr' $$
and as a consequence, for the original field $\psi$, we obtain
$$ \psi(t,r)=\frac{1}{r^2}\int_0^r r' \hat{\psi}(t,r')dr'=\frac{1}{r^2}\int_0^r r' \int_0^{r'} \hat{\psi}_r(t,s)ds dr'. $$
Now observe further that for the self--similar solution 
$$ \hat{\psi}^T(t,r)=r \psi^T_r(t,r)+2 \psi^T(t,r)=\frac{2\rho}{1+\rho^2}+4 \arctan \rho $$
we have
$$ \hat{\psi}^T_r(t,r)=(T-t)^{-1} \frac{2(3+\rho^2)}{(1+\rho^2)^2}=:(T-t)^{-1}g(\rho) $$
and
$$ \hat{\psi}^T_t(t,r)=(T-t)^{-1}\rho g(\rho) $$
where $\rho=\frac{r}{T-t}$.
This implies
\begin{align*} 
 \int_0^{T-t} \left ( \hat{\psi}^T_t(t,r)^2+\hat{\psi}^T_r(t,r)^2 \right )dr &=
(T-t)^{-2} \int_0^{T-t} \left [ \left (\tfrac{r}{T-t}\right )^2 g\left (\tfrac{r}{T-t} \right )^2+
g\left (\tfrac{r}{T-t} \right )^2 \right ] dr \\
&=(T-t)^{-1} \int_0^1 \left (\rho^2 g(\rho)^2+g(\rho)^2 \right )d\rho \\
&\simeq (T-t)^{-1}
\end{align*}
as $t \to T-$ and thus, the local ``energy'' norm derived from the conserved quantity \eqref{eq:enhi} blows up. Therefore, it is a promising candidate for a suitable norm to study the stability of $\psi^T$.
In this paper we develop the linearized perturbation theory about $\psi^T$ in the topology given by this higher energy norm.

\section{Mode stability}
\label{sec:mode}

\subsection{Similarity coordinates}
In this section we study so--called \emph{mode solutions} of the linearized equation around the fundamental self--similar solution $\psi^T$. We linearize Eq.~\eqref{eq:main} by inserting the ansatz $\psi=\psi^T+\varphi$ and neglecting terms of order $\varphi^2$ or higher. 
This yields the \emph{linearized equation}
\begin{equation}
\label{eq:mainrellin}
\varphi_{tt}-\varphi_{rr}-\frac{2}{r}\varphi_r+\frac{2\cos(2\psi^T)}{r^2}\varphi=0 \mbox{ in } \mc{C}_T
\end{equation}
with appropriate initial data.
The concrete form of the data is irrelevant for the mode analysis and therefore, we omit them in this section.
Since the solution $\psi^T$ is self--similar, it is natural to switch to similarity coordinates $(\tau,\rho)$ defined by $\tau:=-\log(T-t)$, $\rho:=\frac{r}{T-t}$.
\begin{figure}[h]
 \includegraphics{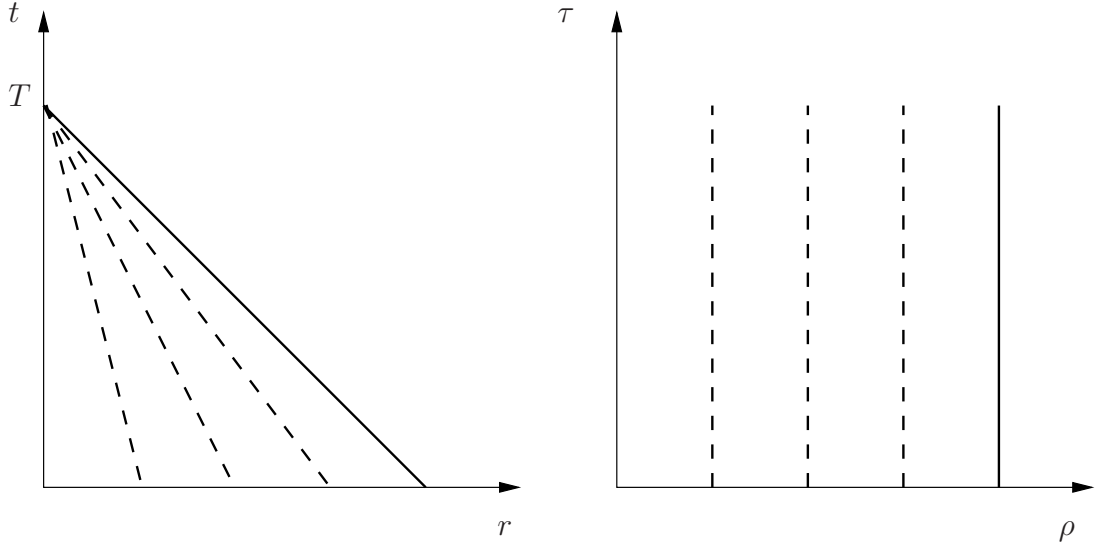}
 \caption{The backward lightcone $\mc{C}_T$ in $(t,r)$ and $(\tau,\rho)$ coordinates. We have also included some lines of constant $\rho$ (dashed).}
  \label{fig:css}
\end{figure}
Observe that this change of variables transforms the cone $\mc{C}_T$ to the infinite cylinder $$ \mc{Z}_T:=\{(\tau,\rho): \tau>-\log T, \rho \in [0,1]\}, $$ see Fig.~\ref{fig:css}.
In particular, the blow up of $\psi^T$ takes place as $\tau$ goes to infinity.
According to the chain rule, the derivatives transform as $\partial_t=e^{\tau}(\partial_\tau+\rho \partial_\rho)$ and $\partial_r=e^\tau \partial_\rho$.
In similarity coordinates $(\tau,\rho)$,
the evolution equation \eqref{eq:mainrellin} reads
\begin{equation}
\label{eq:linearcssscalar}
\phi_{\tau \tau}+\phi_\tau+2\rho \phi_{\tau \rho}-(1-\rho^2)\phi_{\rho \rho}-2\frac{1-\rho^2}{\rho}\phi_\rho+\frac{V(\rho)}{\rho^2}\phi=0 
\end{equation}
where $\phi(\tau,\rho):=\varphi(T-e^{-\tau}, e^{-\tau}\rho)$
and the ``potential'' $V$ is given by
\begin{equation} 
\label{eq:V}
V(\rho)=2 \cos(4 \arctan(\rho))=\frac{2(1-6 \rho^2+\rho^4)}{(1+\rho^2)^2}. 
\end{equation}
The first step in a stability analysis of such an equation is to look for \emph{mode solutions}, i.e., solutions of the form $\phi(\tau,\rho; \lambda)=e^{\lambda \tau}u(\rho,\lambda)$ for a complex number $\lambda$ and a sufficiently regular function $u(\cdot,\lambda)$.
The nonexistence of mode solutions with $\mathrm{Re}\lambda \geq 0$ is a necessary (though, in general, not sufficient) requirement for linear stability of the fundamental self--similar solution $\psi^T$.

\subsection{The eigenvalue equation}
In order to decide whether there are unstable mode solutions or not, we insert the ansatz $\phi(\tau,\rho; \lambda)=e^{\lambda \tau}u(\rho,\lambda)$ into Eq.~\eqref{eq:linearcssscalar} to derive the ordinary differential equation
\begin{equation}
\label{eq:evode}
-(1-\rho^2)u''(\rho,\lambda)-2\frac{1-\rho^2}{\rho}u'(\rho,\lambda)+2\lambda \rho u'(\rho,\lambda)+\frac{V(\rho)}{\rho^2}u(\rho,\lambda)+\lambda (\lambda+1)u(\rho,\lambda)=0
\end{equation}
which constitutes a generalized eigenvalue problem for the function $u(\cdot,\lambda)$ where $'$ always means derivative with respect to $\rho$.
Unfortunately, due to the complexity of the potential $V$, this equation cannot be solved explicitly and it seems to be a hard problem to exclude unstable mode solutions rigorously. 
Indeed, there are only partial results available, see below.
The rest of this section is devoted to the analysis of Eq.~\eqref{eq:evode}.

We start off with some basic observations. Eq.~\eqref{eq:evode} has two regular singular points at $\rho=0$ and $\rho=1$ (which is the location of the lightcone).
Furthermore, a brief inspection of the potential $V$ shows that additionally there are regular singular points outside the ``physical'' domain at $\rho=-1, \pm i$ as well as $\rho=\infty$.
By the change of variables $\rho \mapsto \rho^2$ one can reduce the number of regular singular points from six to four which shows that the general solution is given in terms of Heun's functions.
This, however, is not of much use for us since there is no rigorous theory of Heun's functions and most of the results are based on some sort of numerics.
Nevertheless, we can at least get information on the asymptotic behavior of the solutions around the singular points $\rho=0$ and $\rho=1$ by applying Frobenius' method (see e.g., \cite{miller}, \S 6.2).
The indices at $\rho=0$ are $\{1,-2\}$ and at $\rho=1$ we have $\{0,1-\lambda\}$ which shows that analytic solutions behave like $\rho$ around $0$ and like $1$ at $\rho=1$.
However, note carefully that the nonanalytic solution around $\rho=1$ behaves like $(1-\rho)^{1-\lambda}$ and it therefore becomes more and more regular as $\mathrm{Re}\lambda$ decreases.
It is not clear at this stage (and in fact it is false) that only analytic solutions of Eq.~\eqref{eq:evode} are relevant.
The precise regularity requirements can only be derived once one has a well--posed initial value formulation of Eq.~\eqref{eq:linearcssscalar} and a well--defined spectral problem.
This will be postponed to Sec.~\ref{sec:wp} and for the moment we only look for analytic solutions of Eq.~\eqref{eq:evode}. 
Thus, we say that $\lambda \in \mathbb{C}$ is an eigenvalue iff Eq.~\eqref{eq:evode} has a nontrivial analytic solution $u(\rho,\lambda)$.
Note also that it is sufficient to consider smooth solutions (i.e., solutions in $C^\infty[0,1]$) since any smooth solution of Eq.~\eqref{eq:evode} is automatically analytic by Frobenius' method.

\subsection{The gauge mode}
By a direct computation one immediately checks that the function $g(\rho)=\frac{\rho}{1+\rho^2}$ is a solution of Eq.~\eqref{eq:evode} with $\lambda=1$ and thus, there exists an unstable mode.
However, it is well--known that this instability emerges from the fact that $\psi^T$ is a family of solutions (depending on the free parameter $T$) rather than a single one.
Hence, the time translation symmetry of the original wave map equation \eqref{eq:main} is reflected by this unstable mode of the linearized operator and therefore, the mode $g$ is sometimes called a \emph{gauge mode}.
As a consequence, we really want to study stability ``modulo'' this gauge instability.
We will make this precise in Sec.~\ref{sec:wp}. 
For the moment we just ignore this gauge mode and look for other unstable modes.
Note also that there are no other ``gauge modes'' since all the remaining symmetries of the equation are eliminated by our restriction to co--rotational wave maps (of course, there is still the scaling symmetry but, when acting on a self--similar solution, scaling and time 
translation are equivalent).

\subsection{Known results}
The eigenvalue equation \eqref{eq:evode} has been extensively studied by several authors. First, there are a number of numerical works where essentially three different methods have been applied to show that there are no unstable modes \footnote{We always mean unstable modes apart from the gauge mode.}.
Probably the easiest thing to do is to employ a so--called shooting and matching technique where one integrates the equation \eqref{eq:evode} away from the singular points by using a standard numerical method like Runge--Kutta or variants thereof. One then tries to match the two solutions at $\rho=\frac{1}{2}$. Such a numerical analysis has been performed in \cite{ichdipl}, \cite{ich0}, \cite{bizon99}, \cite{liebling}. Special care has to be taken since one has to solve singular initial value problems but this can be done in a satisfactory way. A more serious disadvantage of this method is that one has to have good initial guesses and therefore, it is in principle possible to ``miss'' eigenvalues.
That is why one would like to have additional (theoretical) arguments that there are no eigenvalues far away from the real axis. In fact, we will prove such a statement, see Lemma \ref{lem:faraway} below.
Second, one can numerically integrate the time evolution equation \eqref{eq:linearcssscalar} 
and remove the gauge instability by a suitable projection \cite{ichdipl}, \cite{ich0}.
With this approach one cannot ``miss'' eigenvalues with large imaginary parts but it is numerically more delicate since one has to solve a PDE instead of an ODE.
Finally, it is possible to rewrite the problem \eqref{eq:evode} as a difference equation and use continued fractions expansions which eventually leads to a root finding problem for a transcendental function.
This is a highly accurate method and it has been successfully applied to Eq.~\eqref{eq:evode} in \cite{bizon05}.
The outcome of all these numerical studies is that the eigenvalue with the largest real part (apart from the gauge mode) is $\lambda \approx -0.542466$ and this strongly suggests the mode stability of the solution $\psi^T$.

As far as rigorous arguments are concerned, there are only partial results.
In \cite{ichpca2}, Eq.~\eqref{eq:evode} is transformed into a self--adjoint Sturm--Liouville form and this approach can be used to show that there are no unstable eigenvalues $\lambda$ with $\mathrm{Re}\lambda>1$.
Furthermore, by ODE techniques it has been proved that there are no \emph{real} unstable eigenvalues except for $\lambda=1$ \cite{ichpca1}.
However, there is no result so far that excludes unstable eigenvalues $\lambda$ with $0 \leq \mathrm{Re}\lambda <1$ and $\mathrm{Im}\lambda\not= 0$.
In what follows we push the boundary a bit further and prove a result that excludes eigenvalues $\lambda$ with $\mathrm{Re}\lambda \geq \frac{1}{2}$.

\subsection{The dual eigenvalue problem}
In \cite{ichpca2} there is given a factorization of the linear operator which can be used to obtain a ``dual'' eigenvalue problem which gives important new insights into the problem of mode stability.

\begin{lemma}
 \label{lem:dual}
Let $u(\cdot,\lambda) \in C^\infty[0,1]$ be a nontrivial solution of Eq.~\eqref{eq:evode}. Then either $\lambda=1$ and $u(\rho,1)=\frac{c\rho}{1+\rho^2}$ for a constant $c \in \mathbb{C}$ (i.e., $u(\cdot,\lambda)$ is a multiple of the gauge mode) or the equation
\begin{equation}
\label{eq:evodedual} -(1-\rho^2)v''(\rho,\lambda)-2\frac{1-\rho^2}{\rho}v'(\rho,\lambda)+2\lambda \rho v'(\rho,\lambda)+\frac{\tilde{V}(\rho)}{\rho^2}v(\rho,\lambda)+\lambda (\lambda+1)v(\rho,\lambda)=0, 
\end{equation} 
with the potential
$$ \tilde{V}(\rho):=\frac{6-2 \rho^2}{1+\rho^2}$$
has a nontrivial solution $v(\cdot,\lambda) \in C^\infty[0,1]$.
\end{lemma}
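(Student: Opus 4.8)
The strategy is to exploit the factorization of the linearized operator recorded in \cite{ichpca2}. Write $\mathcal{L}_\lambda$ for the differential operator on the left--hand side of \eqref{eq:evode} and $\tilde{\mathcal{L}}_\lambda$ for the one in \eqref{eq:evodedual}, and introduce the first--order operator
\[
\mathcal{A}u:=u'-\frac{g'}{g}\,u,\qquad \frac{g'(\rho)}{g(\rho)}=\frac{1-\rho^2}{\rho(1+\rho^2)},
\]
where $g(\rho)=\frac{\rho}{1+\rho^2}$ is the gauge mode. By construction $\mathcal{A}g=0$, and on $(0,1]$ the kernel of $\mathcal{A}$ consists exactly of the constant multiples of $g$, since $\mathcal{A}u=0$ is a first--order linear ODE with $g$ as a nonvanishing solution. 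Given a nontrivial $u:=u(\cdot,\lambda)\in C^\infty[0,1]$ solving \eqref{eq:evode}, one sets $v:=\mathcal{A}u$, and the plan is to show that either $v\equiv 0$, in which case $u$ is forced to be a multiple of $g$ and $\lambda$ to equal $1$, or $v\not\equiv 0$, in which case $v$ is a nontrivial $C^\infty[0,1]$ solution of the dual equation \eqref{eq:evodedual}.

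The core of the argument is the intertwining property: \emph{if} $\mathcal{L}_\lambda u=0$, \emph{then} $\tilde{\mathcal{L}}_\lambda v=0$ for $v=\mathcal{A}u$. I would establish this directly. From $v=\mathcal{A}u$ one gets $u'=v+\frac{g'}{g}u$ and $u''=v'+\frac{g'}{g}v+\frac{g''}{g}u$; substituting into \eqref{eq:evode} and using that $g$ solves \eqref{eq:evode} with $\lambda=1$ --- which allows one to replace $-(1-\rho^2)\frac{g''}{g}-2\frac{1-\rho^2}{\rho}\frac{g'}{g}+\frac{V}{\rho^2}$ by $-2\rho\frac{g'}{g}-2$ --- equation \eqref{eq:evode} reduces to the first--order relation
\[
-(1-\rho^2)v'+\Bigl(2\lambda\rho-2\tfrac{1-\rho^2}{\rho}-(1-\rho^2)\tfrac{g'}{g}\Bigr)v+(\lambda-1)\Bigl(\tfrac{2(1-\rho^2)}{1+\rho^2}+\lambda+2\Bigr)u=0 .
\]
For $\lambda\ne 1$ this expresses $u$ algebraically in terms of $v$ and $v'$; differentiating once and re-inserting $u'=v+\frac{g'}{g}u$ then closes the system into a second--order ODE for $v$ which, after simplification, must be precisely \eqref{eq:evodedual} with $\tilde V(\rho)=\frac{6-2\rho^2}{1+\rho^2}$. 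I expect this final simplification --- in particular checking that the zeroth--order coefficient collapses exactly to $\frac{\tilde V}{\rho^2}+\lambda(\lambda+1)$ --- to be the main, though entirely elementary, obstacle; it is exactly the content of the factorization in \cite{ichpca2}.

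It remains to treat regularity and the degenerate case. A nontrivial $C^\infty[0,1]$ solution of \eqref{eq:evode} is analytic by Frobenius' method and, since the indices at $\rho=0$ are $\{1,-2\}$, has the form $u(\rho)=a_1\rho+O(\rho^2)$ with $a_1\neq 0$; inspecting the lowest order of the recursion even forces $u(\rho)=a_1\rho+O(\rho^3)$. Hence $\frac{g'}{g}u=\frac{1-\rho^2}{1+\rho^2}\cdot\frac{u}{\rho}$ is smooth on $[0,1]$ --- it is smooth at $\rho=1$ as well, where $\frac{g'}{g}$ vanishes --- so $v=\mathcal{A}u\in C^\infty[0,1]$, with $v(\rho)=O(\rho^2)$, consistent with the indices $\{2,-3\}$ of \eqref{eq:evodedual} at $\rho=0$; by Frobenius applied to \eqref{eq:evodedual}, such a $v$ is then automatically analytic. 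Finally, if $v\equiv 0$ then $u\in\ker\mathcal{A}$, so $u(\rho)=\frac{c\rho}{1+\rho^2}$ for some $c\in\mathbb{C}$; substituting this into \eqref{eq:evode} and subtracting the $\lambda=1$ identity for $g$ yields $\mathcal{L}_\lambda g=(\lambda-1)\bigl(\tfrac{2(1-\rho^2)}{1+\rho^2}+\lambda+2\bigr)g$, and since the bracket is a non--constant function of $\rho$ this vanishes identically only for $\lambda=1$. Combining the two cases gives exactly the dichotomy in the statement.
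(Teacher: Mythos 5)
Your overall strategy---a Darboux--type first--order substitution built on the gauge mode---is exactly the idea behind the paper's proof, and your reduction of \eqref{eq:evode} to the first--order relation in $u,v,v'$ is correct. However, the step you flag as ``entirely elementary'', namely that eliminating $u$ ``must'' produce precisely \eqref{eq:evodedual}, is false: the operator $\mathcal{A}=\frac{d}{d\rho}-\frac{g'}{g}$ is not the right intertwiner, and $v=\mathcal{A}u$ does \emph{not} solve \eqref{eq:evodedual} for $\lambda\neq 1$. This can be seen concretely from the Frobenius coefficients. Writing the smooth solution of \eqref{eq:evode} as $u=\rho+a_3\rho^3+a_5\rho^5+\cdots$, the recursion gives $10a_3=\lambda^2+3\lambda-14$ and $28a_5=(\lambda^2+7\lambda-4)a_3+32$, while any analytic solution $c_2\rho^2+c_4\rho^4+\cdots$ of \eqref{eq:evodedual} must satisfy $14c_4=(\lambda^2+5\lambda-2)c_2$. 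Your $v=\mathcal{A}u$ has $c_2=2a_3+2$ and $c_4=4a_5+2a_3-2$, and one computes $14c_4-(\lambda^2+5\lambda-2)c_2=\tfrac{2}{5}(\lambda-1)(\lambda^2+5\lambda-16)$, which is nonzero for generic $\lambda$: e.g.\ for $\lambda=2$ one gets $v=\tfrac{6}{5}\rho^2+\tfrac{34}{35}\rho^4+\cdots$, whereas \eqref{eq:evodedual} would force the $\rho^4$--coefficient $\tfrac{36}{35}$. So your elimination does produce \emph{some} second--order ODE for $v$ (on the set where the coefficient $(\lambda-1)\bigl[\tfrac{2(1-\rho^2)}{1+\rho^2}+\lambda+2\bigr]$ is nonzero), but it is not the dual equation, and the lemma does not follow.

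The repair is exactly the paper's route: because of the $\lambda$--dependent drift term $2\lambda\rho u'$, one must first conjugate, $\tilde u=(1-\rho^2)^{\lambda/2}u$, which puts the equation in the $\lambda$--independent factorized form $\hat{\beta}\beta\tilde u=-(1-\lambda)^2\tilde u$ with $\beta=(1-\rho^2)\frac{d}{d\rho}-\frac{1-3\rho^2}{\rho(1+\rho^2)}$, whose kernel is spanned by the \emph{conjugated} gauge mode $\frac{\rho\sqrt{1-\rho^2}}{1+\rho^2}$, not by $g$ itself. The correct substitute for your $\mathcal{A}$ is thus
\begin{equation*}
u\;\longmapsto\;(1-\rho^2)^{-\lambda/2}\,\beta\bigl((1-\rho^2)^{\lambda/2}u\bigr)
=(1-\rho^2)u'-\lambda\rho\,u-\frac{1-3\rho^2}{\rho(1+\rho^2)}\,u,
\end{equation*}
which differs from $(1-\rho^2)\mathcal{A}u$ by exactly $(1-\lambda)\rho u$---this is why your ansatz is consistent only at $\lambda=1$ (and indeed the obstruction above carries the factor $\lambda-1$). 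With this choice, the SUSY swap $\beta\hat{\beta}(\beta\tilde u)=-(1-\lambda)^2\beta\tilde u$ yields \eqref{eq:evodedual} after undoing the weight, and no division by $\lambda-1$ is ever needed; that also closes a secondary gap in your case analysis, namely the sub--case $\lambda=1$, $\mathcal{A}u\not\equiv 0$, which your argument (requiring $\lambda\neq1$ to solve for $u$) leaves uncovered and would otherwise have to be excluded separately, e.g.\ via uniqueness of the Frobenius index--$1$ solution at $\rho=0$. Your regularity discussion ($v\in C^\infty[0,1]$, indices $\{2,-3\}$ at $\rho=0$, and the $\mathcal{A}u\equiv0$ branch forcing $u=cg$ and $\lambda=1$) is fine as far as it goes; note only that in the corrected scheme one must additionally track, as the paper does, the $(1-\rho)^{\lambda/2}$ behavior at $\rho=1$ introduced by the conjugation.
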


\begin{proof}
 Let $u(\cdot,\lambda) \in C^\infty[0,1]$ be a nontrivial 
solution of Eq.~\eqref{eq:evode}.
We define a new variable $\tilde{u}(\rho,\lambda):=(1-\rho^2)^{\lambda/2}u(\rho,\lambda)$.  Then $\tilde{u}(\cdot,\lambda)$ satisfies
$$ -(1-\rho^2)^2 \tilde{u}''(\rho,\lambda)-2\frac{(1-\rho^2)^2}{\rho}\tilde{u}'(\rho,\lambda)+\frac{(1-\rho^2)V(\rho)-\rho^2}{\rho^2}\tilde{u}(\rho,\lambda)
=-(1-\lambda)^2\tilde{u}(\rho,\lambda) $$
and this equation can be written as
\begin{equation}
\label{eq:BB}
\hat{\beta}\beta\tilde{u}(\cdot,\lambda)=-(1-\lambda)^2 \tilde{u}(\cdot,\lambda) 
\end{equation}
where the (formal) differential operators $\beta$ and $\hat{\beta}$ are given by
$$ \beta:=(1-\rho^2)\frac{d}{d\rho}-\frac{1-3\rho^2}{\rho(1+\rho^2)} $$
and
$$ \hat{\beta}:=-(1-\rho^2)\frac{d}{d\rho}-\frac{3-\rho^2}{\rho(1+\rho^2)}, $$
see \cite{ichpca2}.
The solutions of the first--order ODE $\beta \tilde{u}=0$ can be given explicitly and the one--dimensional solution space is spanned by the function $\rho \mapsto \frac{\rho \sqrt{1-\rho^2}}{1+\rho^2}$.
Thus, if $\beta \tilde{u}(\cdot,\lambda)=0$ then Eq.~\eqref{eq:BB} implies $\lambda=1$ and $u(\cdot,\lambda)$ is a multiple of the gauge mode.
Suppose $\beta\tilde{u}(\cdot,\lambda)\not=0$. 
Applying the operator $\beta$ to Eq.~\eqref{eq:BB} yields
\begin{equation}
\label{eq:BBsusy}
\beta\hat{\beta}\beta\tilde{u}(\cdot,\lambda)=-(1-\lambda)^2 \beta\tilde{u}(\cdot,\lambda) 
\end{equation}
and this is a second order ODE for the nonzero function $\tilde{v}(\cdot,\lambda):=\beta\tilde{u}(\cdot,\lambda)$.
The operator $\beta$ is singular at $\rho=0$ but due to the asymptotics $\tilde{u}(\rho,\lambda)\simeq \rho$ as $\rho \to 0+$ (Frobenius' method) we still have $\tilde{v}(\cdot,\lambda)\in C^\infty[0,1)$.
Furthermore, as $\rho \to 1-$, $\tilde{v}(\rho,\lambda)$ behaves like $(1-\rho)^{\lambda/2}$. 
Explicitly, Eq.~\eqref{eq:BBsusy} reads
$$ -(1-\rho^2)^2 \tilde{v}''(\rho,\lambda)-2\frac{(1-\rho^2)^2}{\rho}\tilde{v}'(\rho,\lambda)+\frac{(1-\rho^2)\tilde{V}(\rho)-\rho^2}{\rho^2}\tilde{v}(\rho,\lambda)
=-(1-\lambda)^2\tilde{v}(\rho,\lambda) $$
with the new potential $\tilde{V}(\rho)=\frac{6-2\rho^2}{1+\rho^2}$.
Setting $v(\rho,\lambda):=(1-\rho^2)^{-\lambda/2}\tilde{v}(\rho,\lambda)$ we obtain $v(\cdot,\lambda)\in C^\infty[0,1]$ and $v(\cdot,\lambda)$ satisfies Eq.~\eqref{eq:evodedual}. 
\end{proof}

\subsection{Nonexistence of eigenvalues with $\mathrm{Re}\lambda \geq \frac{1}{2}$}

Based on Lemma \ref{lem:dual} we can now prove that the gauge mode is the only eigenmode with $\mathrm{Re}\lambda \geq \frac{1}{2}$. 

\begin{proposition}
\label{prop:modstab}
Let $u(\cdot,\lambda) \in C^\infty[0,1]$ be a nontrivial solution of Eq.~\eqref{eq:evode} with $\mathrm{Re}\lambda \geq \frac{1}{2}$. Then $\lambda=1$ and there exists a (nonzero) constant $c \in \mathbb{C}$ such that $u(\rho,1)=\frac{c\rho}{1+\rho^2}$. 
\end{proposition}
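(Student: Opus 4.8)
The plan is to combine Lemma \ref{lem:dual} with a weighted energy identity for the dual equation \eqref{eq:evodedual}, the decisive new input being that the dual potential $\tilde{V}(\rho)=\frac{6-2\rho^2}{1+\rho^2}$ is \emph{strictly positive} on $[0,1]$; in fact $\tilde{V}(\rho)\geq 2$ there (since $6-2\rho^2\geq 4$ and $1+\rho^2\leq 2$), whereas the original potential $V$ changes sign. First, if $u(\cdot,\lambda)\in C^\infty[0,1]$ is a nontrivial solution of \eqref{eq:evode} with $\lambda=1$, then Lemma \ref{lem:dual} already gives $u(\rho,1)=\frac{c\rho}{1+\rho^2}$, with $c\neq 0$ because $u$ is nontrivial; this is the assertion. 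Otherwise Lemma \ref{lem:dual} produces a nontrivial $v(\cdot,\lambda)\in C^\infty[0,1]$ solving \eqref{eq:evodedual}, and it suffices to derive a contradiction assuming $\mathrm{Re}\lambda\geq\frac12$, $\lambda\neq 1$. For $\mathrm{Re}\lambda>1$ this is the self-adjoint Sturm--Liouville argument of \cite{ichpca2}, so the real work is confined to the strip $\frac12\leq\mathrm{Re}\lambda\leq 1$, $\lambda\neq 1$.

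For such $\lambda$ I would multiply \eqref{eq:evodedual} by $\rho^2\overline{v(\rho,\lambda)}$ and integrate over $[0,1]$. The weight $\rho^2$, together with the factor $1-\rho^2$ multiplying the principal part, makes the boundary contributions of the second- and first-order terms vanish at both endpoints (using that $v$ is smooth and $v(0,\lambda)=0$), so a single integration by parts yields an identity of the form
\[
\int_0^1(1-\rho^2)\rho^2|v'|^2\,d\rho+2(\lambda-1)\int_0^1\rho^3 v'\overline{v}\,d\rho+\int_0^1\tilde{V}|v|^2\,d\rho+\lambda(\lambda+1)\int_0^1\rho^2|v|^2\,d\rho=0.
\]
Writing $\lambda=a+ib$ with $a\geq\frac12$, splitting into real and imaginary parts, and using $2\,\mathrm{Re}\int_0^1\rho^3 v'\overline{v}\,d\rho=|v(1,\lambda)|^2-3\int_0^1\rho^2|v|^2\,d\rho$ (integration by parts, $v(0,\lambda)=0$), one eliminates $\mathrm{Im}\int_0^1\rho^3 v'\overline{v}\,d\rho$ between the two relations. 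This leaves a single identity relating the \emph{nonnegative} quantities $\int_0^1(1-\rho^2)\rho^2|v'|^2$, $\int_0^1\tilde{V}|v|^2$ (which is $\geq 2\int_0^1|v|^2>0$ by positivity of $\tilde{V}$) and $\int_0^1\rho^2|v|^2$ to the boundary term $|v(1,\lambda)|^2$, with coefficients that are explicit rational functions of $a$ and $b$.

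To close the argument I would use that, by Frobenius' method at $\rho=1$, the companion solution of \eqref{eq:evodedual} behaves like $(1-\rho)^{1-\lambda}$, which is not $C^\infty$ at $\rho=1$ for $\mathrm{Re}\lambda\in[\frac12,1]\setminus\{1\}$; hence $v(\cdot,\lambda)$ is a nonzero multiple of the analytic solution, so $v(1,\lambda)\neq 0$, and evaluating \eqref{eq:evodedual} at $\rho=1$ expresses $v'(1,\lambda)$ through $v(1,\lambda)$. Feeding a lower bound for $\int_0^1\rho^2|v|^2$ in terms of $|v(1,\lambda)|^2$ --- obtained from the local behavior of $v$ near the lightcone: $\tilde{V}>0$ (with $\tilde{V}(1)=2$) forces $|v|$ to remain comparable to $|v(1,\lambda)|$, and for large $|b|$ the rapid variation of $v$ near $\rho=1$ makes $\int_0^1\rho^2|v|^2$ large relative to $|v(1,\lambda)|^2$ --- into the identity above, one should obtain an inequality that fails precisely when $\mathrm{Re}\lambda\geq\frac12$, hence $v\equiv 0$, a contradiction.

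The main obstacle is exactly the control of the boundary term $|v(1,\lambda)|^2$: for $\mathrm{Re}\lambda<1$ it enters the energy identity with the ``wrong'' sign, and the weight $(1-\rho^2)\rho^2$ in the gradient term degenerates at $\rho=1$ at precisely the rate at which a crude Cauchy--Schwarz bound for $|v(1,\lambda)|$ fails. Resolving it requires genuinely exploiting the ODE near $\rho=1$ --- either via a sharp trace-type estimate bounding $|v(1,\lambda)|^2$ by the bulk energies, or by replacing the multiplier by one that vanishes at $\rho=1$ (which removes the boundary term but reintroduces a sign-indefinite bulk term that must then be absorbed using $\tilde{V}\geq 2$). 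It is in this last step that both the strict positivity of the dual potential and the specific value $\frac12$ of the threshold are used.
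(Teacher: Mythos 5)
Your reduction via Lemma \ref{lem:dual} and the observation that the dual potential $\tilde{V}$ is strictly positive are exactly the paper's starting point, but the core of the argument --- excluding $\lambda$ with $\tfrac12\leq\Re\lambda\leq 1$, $\lambda\neq 1$ --- is not actually established. Your stationary multiplier identity (multiply \eqref{eq:evodedual} by $\rho^2\bar v$ and integrate) is correct as stated, but after splitting into real and imaginary parts you are left, as you yourself note, with the boundary term $|v(1,\lambda)|^2$ entering with the wrong sign for $\Re\lambda<1$, and the two devices you propose to close the argument (a sharp trace estimate, or a multiplier vanishing at $\rho=1$) are not carried out; the sentence ``one should obtain an inequality that fails precisely when $\Re\lambda\geq\tfrac12$'' is the whole content of the proof and it is missing. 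Note that whatever closes the gap cannot be a purely local statement near $\rho=1$ of the kind you sketch: the actual spectrum contains eigenvalues with $\Re\lambda\approx-0.54$ whose dual eigenfunctions have exactly the same Frobenius structure at both endpoints, so any successful estimate must see the value $\tfrac12$ quantitatively, and it is not visible in the proposed combination of the quadratic-form identity with crude bounds.

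The paper's proof uses a genuinely different, dynamical mechanism which is the missing idea. From the dual eigenfunction $v(\cdot,\lambda)$ one forms $\chi(\tau,\rho)=e^{\lambda\tau}v(\rho,\lambda)$, transforms back to the physical coordinates $(t,r)$, where $\chi$ solves $\chi_{tt}-\chi_{rr}-\tfrac{2}{r}\chi_r+\tfrac{\tilde V(r/(T-t))}{r^2}\chi=0$, and considers the local energy
$F_\chi(t)=\tfrac12\int_0^{T-t}\bigl[r^2|\chi_t|^2+r^2|\chi_r|^2+\tilde V\bigl(\tfrac{r}{T-t}\bigr)|\chi|^2\bigr]dr$
on the shrinking lightcone slices. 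Positivity of $\tilde V$ makes $F_\chi>0$; monotonicity $\tilde V'\leq 0$ together with the outgoing flux at $r=T-t$ gives, after integration by parts and Cauchy's inequality, $F_\chi'(t)\leq -|\chi(t,T-t)|^2=-(T-t)^{-2\Re\lambda}|v(1)|^2$. Since $v(1)\neq 0$ (Frobenius, as you also argue) and $\int_0^T(T-t')^{-2\Re\lambda}dt'=\infty$ precisely when $\Re\lambda\geq\tfrac12$, integrating in time drives $F_\chi$ to $-\infty$, a contradiction. This is where the threshold $\tfrac12$ really comes from --- non-integrability of the accumulated boundary flux --- not from absorbing an indefinite bulk term via $\tilde V\geq 2$; without this (or some equally effective substitute) your argument does not prove the proposition.
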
 

\begin{proof}
Let $u(\cdot,\lambda) \in C^\infty[0,1]$ be a nonzero solution of Eq.~\eqref{eq:evode} with $\mathrm{Re}\geq \frac{1}{2}$ and assume that $\lambda \not=1$.
According to Lemma \ref{lem:dual} there exists a nonzero $v(\cdot,\lambda) \in C^\infty[0,1]$ such that
$$ -(1-\rho^2)v''(\rho,\lambda)-2\frac{1-\rho^2}{\rho}v'(\rho,\lambda)+2\lambda \rho v'(\rho,\lambda)+\frac{\tilde{V}(\rho)}{\rho^2}v(\rho,\lambda)+\lambda (\lambda+1)v(\rho,\lambda)=0.$$
Thus, $\chi(\tau,\rho;\lambda):=e^{\lambda \tau} v(\rho,\lambda)$ is a smooth solution of the evolution equation
$$\chi_{\tau \tau}+\chi_\tau+2\rho \chi_{\tau \rho}-(1-\rho^2)\chi_{\rho \rho}-2\frac{1-\rho^2}{\rho}\chi_\rho+\frac{\tilde{V}(\rho)}{\rho^2}\chi=0 $$
and transforming back to the physical coordinates $(t,r)$ this simply reads
$$ \chi_{tt}-\chi_{rr}-\frac{2}{r}\chi_r+\frac{\tilde{V}(\frac{r}{T-t})}{r^2}\chi=0. $$
Motivated by the energy of this equation, consider the positive smooth function
$$ F_\chi(t):=\frac{1}{2}\int_0^{T-t}\left [r^2 |\chi_t(t,r;\lambda)|^2+r^2|\chi_r(t,r;\lambda)|^2
+\tilde{V}\left(\frac{r}{T-t}\right )|\chi(t,r;\lambda)|^2 \right ]dr. $$
Differentiation yields
\begin{align*}
F_\chi'(t)&= \mathrm{Re}\int_0^{T-t} \left [r^2 \chi_{tt} \overline{\chi_t}+r^2 \chi_r \overline{\chi_{rt}}+
\tilde{V}\left(\frac{r}{T-t}\right )\chi \overline{\chi_t}+
\frac{1}{2}
\underbrace{\tilde{V}'\left(\frac{r}{T-t}\right )}_{\leq 0}\frac{r}{(T-t)^2}|\chi|^2 dr \right ]\\
&\quad -\frac{1}{2}(T-t)^2 |\chi_t(t,T-t;\lambda)|^2-\frac{1}{2}(T-t)^2 |\chi_r(t,T-t;\lambda)|^2-\frac{1}{2}\tilde{V}(1)|\chi(t,T-t;\lambda)|^2 \\
&\leq  \mathrm{Re}\int_0^{T-t} \left [\partial_r(r^2 \chi_r)\overline{\chi_t}
+r^2 \chi_r \overline{\chi_{rt}} \right ] dr \\
&\quad -\frac{1}{2}(T-t)^2 |\chi_t(t,T-t;\lambda)|^2-\frac{1}{2}(T-t)^2 |\chi_r(t,T-t;\lambda)|^2-\frac{1}{2}\tilde{V}(1)|\chi(t,T-t;\lambda)|^2 \\
& \leq -|\chi(t,T-t;\lambda)|^2
\end{align*}
by integration by parts and Cauchy's inequality.
Inserting $\chi(t,T-t;\lambda)=(T-t)^{-\lambda}v(1)$ and integrating from $0$ to $t$ for $t<T$ yields
$$ F_\chi(t)\leq -|v(1)|^2 \int_0^t (T-t')^{-2\mathrm{Re}\lambda}dt'+F_\chi(0). $$
However, from the asymptotic behavior of $v$ we know that $v(1)\not=0$ and therefore we have a contradiction since $F_\chi$ is positive but the right--hand side of the above inequality goes to $-\infty$ as $t \to T-$ (recall that we assume $\mathrm{Re}\lambda \geq \frac{1}{2}$).
\end{proof}

\subsection{Reformulation as a resonance problem}
We conclude this section by showing that the mode stability problem is in fact equivalent to finding resonances of the $\ell=2$ Schr\"odinger operator with a potential that has inverse square decay towards infinity.
We formulate the problem for the ``dual'' eigenvalue equation
$$
-(1-\rho^2)v''(\rho,\lambda)-2\frac{1-\rho^2}{\rho}v'(\rho,\lambda)+2\lambda \rho v'(\rho,\lambda)+\frac{\tilde{V}(\rho)}{\rho^2}v(\rho,\lambda)+\lambda (\lambda+1)v(\rho,\lambda)=0 
$$
with potential $\tilde{V}(\rho)=\frac{6-2\rho^2}{1+\rho^2}$.
We are interested in smooth solutions and according to numerics we expect the eigenvalue with the largest real part to be $\lambda \approx -0.54$.
Now suppose $v(\cdot,\lambda)$ is an eigenfunction, i.e., a smooth solution to the above equation.
By Frobenius' method we have the asymptotic behavior $v(\rho,\lambda)\simeq \rho^2$ as $\rho \to 0+$ and $v(\rho,\lambda)\simeq 1$ as $\rho \to 1-$.
Defining a new variable $\tilde{v}$ by 
$$ \tilde{v}(x,\mu):=(1-\tanh^2 x)^{\lambda/2}\sinh x \:u(\tanh x,1-i\mu),$$ 
the eigenvalue problem transforms into
\begin{equation}
\label{eq:SG}
-\tilde{v}''(x,\lambda)+\frac{6}{x^2}\tilde{v}(x,\lambda)+Q(x)\tilde{v}(x,\lambda)=\mu^2 \tilde{v}(x,\lambda) 
\end{equation}
on the half--line $x \geq 0$ with the potential
$$ Q(x)=\frac{\tilde{V}(\tanh x)}{\sinh^2 x}-\frac{6}{x^2}. $$
The outgoing Jost solution $f_+(\cdot,\mu)$ is defined as the solution of Eq.~\eqref{eq:SG} that behaves like $f_+(x,\mu)\sim e^{i\mu x}$ as $x \to \infty$.
Furthermore, we denote by $f_0(\cdot,\mu)$ the solution of Eq.~\eqref{eq:SG} which is square--integrable near 
$x = 0$.
The eigenfunction $\tilde{v}(\cdot,\mu)$ has the asymptotic behavior $\tilde{v}(x,\mu) \simeq x^3$ as $x \to 0+$ and $\tilde{v}(x,\mu)\simeq e^{i\mu x}$ as $x \to \infty$.
Thus, finding eigenvalues amounts to matching the outgoing Jost solution $f_+(\cdot,\mu)$ to $f_0(\cdot,\mu)$ or, in other words, $\lambda$ is an eigenvalue iff $\mu=i(\lambda-1)$ 
is a zero of the Wronskian 
$$ W(f_0,f_+)(\mu)=f_0(\cdot,\mu)f_+'(\cdot,\mu)-f_0'(\cdot,\mu)f_+(\cdot,\mu). $$ 
We are mainly interested in eigenvalues $\lambda$ with $\mathrm{Re}\lambda<1$ and this translates into the condition $\mathrm{Im}\mu<0$ since $\mu=-\mathrm{Im}\lambda+i\mathrm{Re}(\lambda-1)$.
However, zeros of the Wronskian $W(f_0,f_+)(\mu)$ with $\mathrm{Im}\mu<0$ are precisely the resonances or scattering poles of the Schr\"odinger operator defined by Eq.~\eqref{eq:SG}.

\section{The linear perturbation theory}
\label{sec:wp}

\subsection{First--order formulation}
We intend to rewrite the wave maps equation \eqref{eq:mainreg} as a system which is first--order in time.
From now on we are a bit more careful and specify the full Cauchy problem we want to study, including the initial data and the domain.
Recall the definition of the backward lightcone 
$$ \mc{C}_T:=\{(t,r): t \in (0,T), r \in [0,T-t]\}. $$
Precisely, we are interested in the evolution problem
\begin{equation}
 \label{eq:maincauchy}
\left \{ \begin{array}{l}
\psi_{tt}(t,r)-\psi_{rr}(t,r)-\frac{2}{r}\psi_r(t,r)+\frac{\sin(2\psi(t,r))}{r^2}=0 \mbox{ for } (t,r) \in \mc{C}_T \\
\psi(0,r)=f(r), \psi_t(0,r)=g(r) \mbox{ for }r \in [0,T]
         \end{array} \right .
\end{equation}
with given initial data $(f,g)$.
For the remainder of this section we assume that a solution $\psi$ of the Cauchy problem \eqref{eq:maincauchy} exists and it is sufficiently regular so that all of the following computations are justified.
In particular, we assume that $\psi$ satisfies the regularity condition $\psi(t,0)=0$ which implies $f(0)=g(0)=0$.
Actually, we want to study the flow about the self--similar solution $\psi^T$ and thus, we are interested in ``small'' perturbations $\varphi$ of $\psi^T$.
Thus, it is convenient to rewrite the problem relative to $\psi^T$ by setting $\psi=\psi^T+\varphi$ and expanding the nonlinearity in Eq.~\eqref{eq:maincauchy} as
$$ \sin(2(\psi^T+\varphi))=\sin(2\psi^T)+2\cos(2\psi^T)\varphi+N_T(\varphi) $$
where $N_T(x)=O(x^2)$ for $x \to 0$ is the nonlinear remainder.
The Cauchy problem \eqref{eq:maincauchy} becomes
\begin{equation}
\label{eq:mainrel}
\left \{ \begin{array}{l}
\varphi_{tt}-\varphi_{rr}-\frac{2}{r}\varphi_r+\frac{2}{r^2}\varphi+\frac{2\cos(2\psi^T)-2}{r^2}\varphi+\frac{N_T(\varphi)}{r^2}=0 \mbox{ in } \mc{C}_T \\
\varphi(0,r)=f(r)-\psi^T(0,r), \varphi_t(0,r)=g(r)-\psi^T_t(0,r) \mbox{ for }r \in [0,T]
         \end{array} \right .
\end{equation}
where we have suppressed the arguments of $\varphi$ and $\psi^T$ so as not to overload the equation with too many symbols.
From the regularity requirement $\psi(t,0)=0$ we obtain the boundary condition $\varphi(t,0)=0$ for all $t \in [0,T)$.
Next, motivated by the discussion in Sec.~\ref{sec:bunorm}, we introduce the new variable $\tilde{\varphi}(t,r):=r^2 \varphi(t,r)$ which yields
\begin{equation}
\left \{ \begin{array}{l}
\tilde{\varphi}_{tt}-\tilde{\varphi}_{rr}+\frac{2}{r}\tilde{\varphi}_r
+\frac{2\cos(2\psi^T)-2}{r^2}\tilde{\varphi}+N_T(r^{-2}\tilde{\varphi})=0 \mbox{ in } \mc{C}_T  \\
\tilde{\varphi}(0,r)=r^2 [f(r)-\psi^T(0,r)], \tilde{\varphi}_t(0,r)=r^2 [g(r)- \psi^T_t(0,r)] \mbox{ for }r \in [0,T]
         \end{array} \right . 
\end{equation}
and again, we occasionally omit the arguments of $\tilde{\varphi}$ and $\psi^T$.
In order to write this Cauchy problem as a first--order system in time, we introduce the variables $$ \varphi_1(t,r):=\frac{\tilde{\varphi}_t(t,r)}{T-t},\quad \varphi_2(t,r):=\frac{\tilde{\varphi}_r(t,r)}{r}. $$
This choice is motivated by the fact that
$\varphi_j$, $j=1,2$, scale like the original field $\varphi$ or, in other words, they have the same physical dimension.
Furthermore,
the higher energy \eqref{eq:enhi} is a simple expression in terms of $\varphi_j$.
Note also that the field $\tilde{\varphi}$ can be reconstructed from $\varphi_2$ by
$$ \tilde{\varphi}(t,r)=\int_0^r r' \varphi_2(t,r')dr' $$
thanks to the boundary condition $\tilde{\varphi}(t,0)=0$ for all $t \in [0,T)$.
We obtain the system
\begin{equation}
\label{eq:main1st}
\left \{ \begin{array}{l}
\left .\begin{array}{l}
\partial_t \varphi_1=(T-t)^{-1}\left [\varphi_1+r\partial_r \varphi_2
-\varphi_2-\frac{2\cos(2\psi^T)-2}{r^2}\int r\varphi_2-N_T \left ( r^{-2}\int r \varphi_2 \right ) \right ]
 \\
\partial_t \varphi_2=\frac{T-t}{r}\partial_r \varphi_1 \end{array} \right \}
\mbox{ in } \mc{C}_T \\
\left. \begin{array}{l}
\varphi_1(0,r)=\frac{1}{T}r^2\left [g(r)-\psi^T_t(0,r) \right ] \\
\varphi_2(0,r)=r^{-1}\partial_r r^2 \left [f(r)-\psi^T(0,r) \right ] 
\end{array} \right \}
\mbox{ for } r \in [0,T]
\end{array} \right .
\end{equation} 
where $\int r \varphi_2$ is shorthand for $\int_0^r r' \varphi_2(t,r')dr'$.
Now we switch to similarity coordinates 
$$ \tau=-\log (T-t),\quad \rho=\tfrac{r}{T-t}. $$
Then we have
$$ \int_0^r r'\varphi_2(t,r')dr'=e^{-2\tau}\int_0^\rho \rho' \varphi_2(t, e^{-\tau}\rho')d\rho' $$
and $t=0$ corresponds to $\tau=-\log T$.
Thus, the system \eqref{eq:main1st} transforms into
\begin{equation}
\label{eq:main1stcss}
\left \{ \begin{array}{l}
\left. \begin{array}{l}
\partial_\tau \phi_1=-\rho \partial_\rho \phi_1+\phi_1+\rho \partial_\rho \phi_2
-\phi_2-\frac{2\cos(2 f_0)-2}{\rho^2}\int \rho \phi_2-N_T \left ( \rho^{-2}\int \rho \phi_2 \right ) 
 \\
\partial_\tau \phi_2=\frac{1}{\rho}\partial_\rho \phi_1-\rho \partial_\rho \phi_2 \end{array} \right \}
\mbox{ in }\mc{Z}_T \\
\left. \begin{array}{l}
\phi_1(-\log T,\rho)=T\rho^2\left [g(T\rho)-\psi^T_t(0,T\rho) \right ] \\
\phi_2(-\log T,\rho)=T\rho \left [f'(T\rho)-\psi^T_r(0,T\rho) \right ] +2 \left [f(T\rho)-\psi^T(0,T\rho)
\right ] 
\end{array} \right \}
\mbox{ for } \rho \in [0,1]
\end{array} \right .
\end{equation} 
where $\phi_j(\tau,\rho):=\varphi_j(T-e^{-\tau}, e^{-\tau}\rho)$, $j=1,2$ and $f_0(\rho)=2 \arctan(\rho)$ is the fundamental self--similar solution in similarity coordinates.
Tracing back the above transformations, we see that 
$(\phi_1, \phi_2)$ is a (sufficiently regular) solution of \eqref{eq:main1stcss} if and only if $\psi$ given by
 $$ \psi(t,r)=\psi^T(t,r)+\frac{1}{r^2}\int_0^r r' \phi_2\left (-\log(T-t), \tfrac{r'}{T-t} \right )dr' $$
 is a (sufficiently regular) solution of \eqref{eq:maincauchy}.
From now on we restrict ourselves to the linearized problem, i.e., we drop the nonlinearity in Eq.~\eqref{eq:main1stcss} and consider the system
\begin{equation}
\label{eq:main1stlinear}
\left \{ \begin{array}{l}
\left. \begin{array}{l}
\partial_\tau \phi_1=-\rho \partial_\rho \phi_1+\phi_1+\rho \partial_\rho \phi_2
-\phi_2-\frac{2\cos(2 f_0)-2}{\rho^2}\int \rho \phi_2 
 \\
\partial_\tau \phi_2=\frac{1}{\rho}\partial_\rho \phi_1-\rho \partial_\rho \phi_2 \end{array} \right \}
\mbox{ in }\mc{Z}_T \\
\left. \begin{array}{l}
\phi_1(-\log T,\rho)=T\rho^2\left [g(T\rho)-\psi^T_t(0,T\rho) \right ] \\
\phi_2(-\log T,\rho)=T\rho \left [f'(T\rho)-\psi^T_r(0,T\rho) \right ] +2 \left [f(T\rho)-\psi^T(0,T\rho)
\right ] 
\end{array} \right \}
\mbox{ for } \rho \in [0,1]
\end{array} \right .
\end{equation} 
Our aim is to study Eq.~\eqref{eq:main1stlinear} by means of semigroup theory, i.e., we rewrite the problem as an ordinary differential equation on a suitable Hilbert space.

\subsection{Function spaces and operator formulation}

We set
$$\tilde{H}_1:=\{u \in C^2[0,1]: u(0)=u'(0)=0\}, \quad
\tilde{H}_2:=\{u \in C^1[0,1]: u(0)=0\} $$
and define two sesquilinear forms $(\cdot|\cdot)_1$, $(\cdot|\cdot)_2$ by
$$ (u|v)_1:=\int_0^1 u'(\rho)\overline{v'(\rho)}\frac{d\rho}{\rho^2}, \quad 
(u|v)_2:=\int_0^1 u'(\rho)\overline{v'(\rho)}d\rho. $$

\begin{lemma}
 The sesquilinear form $(\cdot|\cdot)_j$ defines an inner product on $\tilde{H}_j$, $j=1,2$.
\end{lemma}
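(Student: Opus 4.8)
\emph{Plan.} The sesquilinearity and conjugate symmetry of $(\cdot|\cdot)_j$ are immediate from the definition (the conjugate symmetry because $u'\overline{v'}$ and $v'\overline{u'}$ are complex conjugates of one another and the weights $\rho^{-2}$, resp.\ $1$, are real). So the only substantive points are (i) that the integrals defining $(u|v)_j$ actually converge for $u,v\in\tilde H_j$, and (ii) definiteness, i.e.\ that $(u|u)_j=0$ forces $u=0$.

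For the convergence in the case $j=2$ there is nothing to do: if $u,v\in C^1[0,1]$ then $u'\overline{v'}$ is continuous on $[0,1]$, hence bounded, and the integral over $[0,1]$ is finite. For $j=1$ the weight $\rho^{-2}$ is singular at $\rho=0$, so here I would use the boundary conditions. If $u\in C^2[0,1]$ with $u(0)=u'(0)=0$, then Taylor's theorem gives $u'(\rho)=u'(0)+\rho\int_0^1 u''(t\rho)\,dt=O(\rho)$ as $\rho\to0+$, so $\rho\mapsto u'(\rho)/\rho$ extends continuously to $[0,1]$; consequently $|u'(\rho)|^2/\rho^2$ is bounded on $(0,1]$ and, by Cauchy--Schwarz for the pointwise product, $|u'(\rho)\overline{v'(\rho)}|/\rho^2$ is integrable. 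Thus $(\cdot|\cdot)_1$ is well defined on $\tilde H_1$.

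Positivity is clear since $(u|u)_j=\int_0^1 |u'(\rho)|^2 w_j(\rho)\,d\rho\ge0$ with $w_1(\rho)=\rho^{-2}>0$, $w_2\equiv1$. For definiteness, suppose $(u|u)_j=0$. The integrand $\rho\mapsto|u'(\rho)|^2 w_j(\rho)$ is nonnegative and continuous on $(0,1]$ (and, in the case $j=1$, by the above it extends continuously to $[0,1]$), so it must vanish identically on $(0,1]$; since $w_j>0$ there, this gives $u'\equiv0$ on $(0,1]$, hence $u$ is constant on $[0,1]$ by continuity, and the boundary condition $u(0)=0$ (valid for both $j$) forces $u\equiv0$. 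Therefore $(\cdot|\cdot)_j$ is an inner product on $\tilde H_j$ for $j=1,2$.

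\emph{Main obstacle.} There is no real obstacle; the only point requiring care is the integrability at $\rho=0$ for $j=1$, which is precisely why the space $\tilde H_1$ is defined with the double vanishing condition $u(0)=u'(0)=0$ rather than just $u(0)=0$.
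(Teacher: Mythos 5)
Your proof is correct. The structure matches the paper's: the only nontrivial points are well--definedness of $(\cdot|\cdot)_1$ near $\rho=0$ and definiteness via the boundary condition $u(0)=0$, and your definiteness argument is the same as the paper's. The one place where you genuinely diverge is the treatment of the singular weight for $j=1$: the paper invokes Hardy's inequality, $\int_0^1 |u'(\rho)|^2\rho^{-2}\,d\rho \lesssim \int_0^1|u''(\rho)|^2\,d\rho$, which is legitimate precisely because $u'(0)=0$, whereas you argue pointwise via Taylor's theorem that $u'(\rho)/\rho$ extends continuously to $[0,1]$, so the integrand is bounded. Your route is more elementary and, for the classical space $\tilde H_1\subset C^2[0,1]$, actually gives slightly more (continuity of $u'/\rho$ up to $\rho=0$); the paper's Hardy--inequality route has the advantage of being the estimate that recurs later in the paper (e.g.\ in the boundedness and compactness proofs for $L'$), so stating it here sets up that machinery. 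Either argument is fully adequate for the lemma as stated.
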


\begin{proof}
 Obviously, $(\cdot|\cdot)_2$ is well--defined on all of $\tilde{H}_2 \times \tilde{H}_2$. 
 Furthermore, for $u\in \tilde{H}_2$, $(u|u)_2=0$ is equivalent to $u'=0$ which, in turn, is equivalent to $u=0$ thanks to the boundary condition $u(0)=0$. This proves the assertion for $j=2$.
 For $j=1$ we apply Hardy's inequality to see that
 $$ (u|u)_1=\int_0^1 \frac{|u'(\rho)|^2}{\rho^2}d\rho \lesssim \int_0^1 |u''(\rho)|^2 d\rho < \infty $$
 for $u \in \tilde{H}_1$ which is justified by the boundary condition $u'(0)=0$.
 As a consequence, $(\cdot|\cdot)_1$ is well--defined on all of $\tilde{H}_1 \times \tilde{H}_1$.
The rest of the proof is identical to the case $j=2$ from above.
\end{proof}

As usual, the inner product on $\tilde{H}_j$ induces a norm $\|\cdot\|_j:=\sqrt{(\cdot|\cdot)_j}$, $j=1,2$.
Moreover, we denote by $(\cdot|\cdot)$ the canonically induced inner product on 
$\tilde{\mc{H}}:=\tilde{H}_1 \times \tilde{H}_2$, i.e., 
$$ (\mb{u}|\mb{v}):=(u_1|v_1)_1+(u_2|v_2)_2 $$
and $\|\cdot\|:=\sqrt{(\cdot|\cdot)}$.
The vector space $\tilde{\mc{H}}$ equipped with $(\cdot|\cdot)$ is a pre--Hilbert space and we denote by $\mc{H}=H_1 \times H_2$ its completion.
We have the following convenient density result.

\begin{lemma}
\label{lem:dense}
 The set $C_c^\infty (0,1]=\{u \in C^\infty [0,1]: \mathrm{supp}(u) \subset (0,1]\}$ is dense in $H_j$, $j=1,2$.
\end{lemma}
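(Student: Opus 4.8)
The plan is to prove density of $C_c^\infty(0,1]$ in $H_j$ by first establishing density of $C_c^\infty(0,1]$ in the pre-Hilbert space $(\tilde H_j, (\cdot|\cdot)_j)$ and then invoking the fact that $H_j$ is, by definition, the completion of $\tilde{\mc H}$ (so anything dense in $\tilde H_j$ is automatically dense in $H_j$). Thus it suffices to approximate an arbitrary $u \in \tilde H_j$ by functions in $C_c^\infty(0,1]$ in the norm $\|\cdot\|_j$. The key feature of both norms is that they only see $u'$ (weighted by $\rho^{-2}$ for $j=1$, unweighted for $j=2$), and both $\tilde H_j$ impose $u(0)=0$ (plus $u'(0)=0$ for $j=1$), so the natural strategy is to approximate $u'$ rather than $u$ directly and then integrate from $0$.

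The main step is the cutoff near $\rho=0$. First I would fix a smooth cutoff $\eta \in C^\infty[0,1]$ with $\eta(\rho)=0$ for $\rho\le 1$, $\eta(\rho)=1$ for $\rho\ge 2$, and set $\eta_\varepsilon(\rho):=\eta(\rho/\varepsilon)$. For $j=2$: put $u_\varepsilon(\rho):=\int_0^\rho \eta_\varepsilon(s)u'(s)\,ds$; then $u_\varepsilon$ vanishes near $0$, is $C^1$ (indeed $C^\infty$ can be arranged after a further mollification of $u'$, see below), and $\|u-u_\varepsilon\|_2^2=\int_0^1 |1-\eta_\varepsilon(\rho)|^2|u'(\rho)|^2\,d\rho \le \int_0^{2\varepsilon}|u'(\rho)|^2\,d\rho \to 0$ by absolute continuity of the integral, since $u'\in L^2(0,1)$ (as $u\in C^1$). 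For $j=1$ the same construction works: $\|u-u_\varepsilon\|_1^2 \le \int_0^{2\varepsilon}\frac{|u'(\rho)|^2}{\rho^2}\,d\rho$, which tends to $0$ because the integrand is in $L^1(0,1)$ — this is exactly the finiteness $(u|u)_1<\infty$ established via Hardy's inequality in the previous lemma, so the tail of a convergent integral vanishes. Note that $u_\varepsilon(1)=\int_0^1\eta_\varepsilon u' \, ds$ need not equal $u(1)$, but that is irrelevant: we only need support in $(0,1]$, not any boundary condition at $\rho=1$, and $\|u-u_\varepsilon\|_j$ controls the difference regardless of endpoint values since the norm ignores additive constants... actually more carefully, $u-u_\varepsilon$ has derivative $(1-\eta_\varepsilon)u'$ so the norm computation above is literally correct with no boundary term.

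The remaining step is smoothing: after the cutoff, $u_\varepsilon$ is supported in $[\varepsilon,1]$ but only as regular as $u$ (so $C^1$ for $j=2$, $C^2$ for $j=1$). Since $u_\varepsilon'=\eta_\varepsilon u'$ is continuous and supported in $[\varepsilon,1]$, and $H_j$ is a space of $L^2$-type functions away from the boundary, I would replace $u'$ by a mollification $(\eta_\varepsilon u')*\varphi_\delta$ (standard Friedrichs mollifier, with $\delta\ll\varepsilon$ so support stays inside $(0,1]$, extending by $0$ outside $[\varepsilon,1]$), then integrate from $0$ to get a $C^\infty(0,1]$ function with support in $(0,1]$; mollification converges in $L^2(d\rho)$ and also in $L^2(\rho^{-2}d\rho)$ since on $[\varepsilon/2,1]$ the weight $\rho^{-2}$ is bounded above and below. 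A diagonal argument in $(\varepsilon,\delta)$ finishes the approximation. The only mild subtlety — and the place to be careful — is ensuring the smoothed function still lands in the completion $H_j$ rather than merely in some larger $L^2$ space; but since each approximant is genuinely in $C_c^\infty(0,1]\subset \tilde H_j$, this is automatic, and the whole argument reduces to the two elementary "vanishing tail of a convergent integral" estimates above, the $j=1$ case being underwritten by the Hardy inequality already in hand.
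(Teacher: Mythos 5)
Your proof is correct. It rests on the same structural observations as the paper's — the norm $\|\cdot\|_j$ only sees the (weighted) derivative, and the boundary condition at $\rho=0$ lets you recover an approximant by integrating the approximated derivative from $0$, after which density of $\tilde{H}_j$ in the completion finishes the job — but you execute the $L^2$ approximation step differently. The paper absorbs the weight into the test function: since $u\in\tilde{H}_1$ gives $u'/\rho\in L^2(0,1)$, it picks $v\in C_c^\infty(0,1)$ with $\|u'/\rho-v\|_{L^2(0,1)}<\varepsilon$ by citing the standard density of test functions in $L^2$, and sets $\tilde{u}(\rho)=\int_0^\rho \rho' v(\rho')\,d\rho'$, so that $\tilde{u}'/\rho=v$ exactly and $\|u-\tilde{u}\|_1=\|u'/\rho-v\|_{L^2(0,1)}$, with the $j=2$ case identical without the weight. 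You instead work directly in the weighted space and reprove that density fact by hand: a cutoff near $\rho=0$, whose error is the tail of the convergent integral $\int_0^1|u'(\rho)|^2\rho^{-2}\,d\rho$ (finite by the preceding Hardy-inequality lemma, as you correctly note), followed by a Friedrichs mollification with $\delta\ll\varepsilon$, where the weight $\rho^{-2}$ is harmless because all functions involved are supported in $[\varepsilon/2,1]$. Your checks at the delicate points are right: the error of the cutoff is computed exactly with no boundary term, the mollified derivative stays supported away from $0$, and the final primitives genuinely lie in $C_c^\infty(0,1]\subset\tilde{H}_j$, so there is no issue of landing outside the completion. What your route buys is a self-contained, more elementary argument (it is in effect the standard proof of the cited $L^2$ density, adapted to the weight); what the paper's isometry trick buys is brevity, since dividing by the weight makes the unweighted $L^2$ density theorem apply verbatim and avoids the support and regularity bookkeeping for the intermediate cutoff function.
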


\begin{proof}
 Let $u \in \tilde{H}_1$. By definition, this implies $\frac{u'}{w} \in L^2(0,1)$ where $w(\rho):=\rho$.
 It is well--known that $C^\infty_c(0,1)$ is dense in $L^2(0,1)$ (see e.g., \cite{adams}, p.~31, Theorem 2.19) and thus, for any given $\varepsilon>0$, there exists a $v \in C^\infty_c(0,1)$ such that
$\|\frac{u'}{w}-v\|_{L^2(0,1)}<\varepsilon$.
Set 
$$ \tilde{u}(\rho):=\int_0^\rho \rho' v(\rho')d\rho'. $$
Then $\tilde{u} \in C^\infty_c(0,1]$ and $\frac{\tilde{u}'(\rho)}{\rho}=v(\rho)$.
Consequently, we obtain
$\|u-\tilde{u}\|_1=\|\frac{u'}{w}-v \|_{L^2(0,1)}<\varepsilon$ and, since
$\tilde{H}_1$ is dense in $H_1$ by construction, this implies the claim for $H_1$. The proof for $H_2$ is analogous.
\end{proof}

For later reference it is useful to note the following embedding and the fact that the boundary conditions for the functions ``survive'' the completion procedure.

\begin{lemma}
 \label{lem:HinC}
We have the embedding $\mc{H} \hookrightarrow C[0,1] \times C[0,1]$ and $\mb{u} \in \mc{H}$ implies $\mb{u}(0)=0$.
\end{lemma}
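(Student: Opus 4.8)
The plan is to exhibit, for each component, a pointwise bound of the form $|u(\rho)| \lesssim \|u\|_j$ valid on all of $[0,1]$ with a constant independent of $\rho$, which immediately gives that the identity map $\tilde H_j \to C[0,1]$ is bounded and hence extends to the completion $H_j \hookrightarrow C[0,1]$. Since elements of $\tilde H_j$ all satisfy $u(0)=0$ and this is preserved under uniform limits, the boundary condition survives the completion. First I would handle $\tilde H_2$: for $u \in \tilde H_2$ we have $u(\rho)=\int_0^\rho u'(s)\,ds$ by the fundamental theorem of calculus together with $u(0)=0$, so by Cauchy--Schwarz $|u(\rho)| \le \rho^{1/2}\|u'\|_{L^2(0,1)} \le \|u\|_2$ uniformly in $\rho \in [0,1]$. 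In particular $|u(0)|=0$ is stable, and any Cauchy sequence in $\|\cdot\|_2$ is Cauchy in $C[0,1]$, so its $H_2$-limit is represented by a continuous function vanishing at $0$.

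Next I would treat $\tilde H_1$. The norm $\|u\|_1^2 = \int_0^1 |u'(\rho)|^2 \rho^{-2}\,d\rho$ controls $u'/w$ in $L^2(0,1)$ with $w(\rho)=\rho$, but for the pointwise bound on $u$ I want to bound $u'$ in $L^1$ and then integrate. Writing $|u'(\rho)| = \frac{|u'(\rho)|}{\rho}\cdot \rho$ and applying Cauchy--Schwarz gives $\int_0^\rho |u'(s)|\,ds \le \big(\int_0^1 |u'(s)|^2 s^{-2}\,ds\big)^{1/2}\big(\int_0^\rho s^2\,ds\big)^{1/2} \lesssim \|u\|_1$, using $u(0)=0$ to write $u(\rho)=\int_0^\rho u'(s)\,ds$. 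Hence $|u(\rho)| \lesssim \|u\|_1$ uniformly on $[0,1]$, which is exactly the boundedness of $\tilde H_1 \hookrightarrow C[0,1]$; the extension to $H_1$ and the persistence of $u(0)=0$ follow as before. Combining the two components yields $\mc{H} = H_1 \times H_2 \hookrightarrow C[0,1]\times C[0,1]$ and $\mb{u}(0)=0$ for all $\mb{u}\in\mc{H}$.

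The one subtlety — and the only point that needs a little care rather than a routine estimate — is the logical structure of ``surviving the completion''. An element of $H_j$ is a priori an abstract equivalence class of Cauchy sequences $(u_n)$ from $\tilde H_j$; I need to argue that the uniform bound $\|u_n - u_m\|_{C[0,1]} \lesssim \|u_n - u_m\|_j$ makes $(u_n)$ Cauchy in the Banach space $C[0,1]$, hence convergent there to some $\tilde u \in C[0,1]$, and that this $\tilde u$ depends only on the equivalence class and consistently represents the $H_j$-element (two $\|\cdot\|_j$-equivalent sequences have the same $C[0,1]$-limit, again by the same inequality). Since each $u_n(0)=0$ and $u_n \to \tilde u$ uniformly, $\tilde u(0)=0$. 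This identifies $H_j$ with a subspace of $C[0,1]$ consisting of functions vanishing at the origin, which is the assertion.
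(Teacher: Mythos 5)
Your proposal is correct and follows essentially the same route as the paper: a pointwise bound $|u(\rho)|\lesssim \|u\|_j$ obtained from the fundamental theorem of calculus and Cauchy--Schwarz, then extension to the completion by density and uniform convergence, which also preserves the vanishing at $\rho=0$. The only cosmetic difference is that for $H_1$ you apply a weighted Cauchy--Schwarz with the factors $|u'(s)|/s$ and $s$, whereas the paper simply reuses the $H_2$ computation together with $\|u\|_2\leq\|u\|_1$; your more explicit treatment of the abstract completion is fine and changes nothing essential.
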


\begin{proof}
Note first that for $u \in \tilde{H}_2$ we have
 $$ |u(\rho)|= \left |\int_0^\rho u'(\rho)d\rho \right |\leq \int_0^1 |u'(\rho)|d\rho \leq \left (\int_0^1 |u'(\rho)|^2 d\rho \right )^{1/2}=\|u'\|_{L^2(0,1)}=\|u\|_2 $$
 for all $\rho \in [0,1]$ and this computation also implies $|u(\rho)|\leq \|u\|_1$ for all $u \in \tilde{H}_1$ and $\rho \in [0,1]$.
 By approximation, these estimates extend to $H_2$ and $H_1$, respectively, and we obtain
$$ \|\mb{u}\|_{L^\infty(0,1) \times L^\infty(0,1)}^2=\|u_1\|_{L^\infty(0,1)}^2+\|u_2\|_{L^\infty(0,1)}^2 \leq \|\mb{u}\|^2 $$ 
for all $\mb{u} \in \mc{H}$ by taking the supremum over $\rho \in [0,1]$.
Now let $\mb{u} \in \mc{H}$ be arbitrary. By construction, there exists a sequence $(\mb{u}_j) \subset \tilde{\mc{H}}$ such that $\|\mb{u}-\mb{u}_j\| \to 0$ as $j \to \infty$.
The above inequality implies
$$ \|\mb{u}-\mb{u}_j\|_{L^\infty(0,1)\times L^\infty(0,1)} \lesssim \|\mb{u}-\mb{u}_j\| \to 0 $$
as $j \to \infty$ and, 
since the uniform limit of continuous functions is continuous, this shows that
$\mb{u} \in C[0,1] \times C[0,1]$.
In particular, $\mb{u}_j \to \mb{u}$ pointwise and, since $\mb{u}_j(0)=0$ for all $j$, we obtain $\mb{u}(0)=0$.
\end{proof}

Now we define a linear operator $\tilde{L}_0: \mc{D}(\tilde{L}_0) \subset \mc{H} \to \mc{H}$ that represents the spatial differential operator in Eq.~\eqref{eq:main1stlinear} as follows.
We set 
$$ \mc{D}(\tilde{L}_0):=\{u \in C^3[0,1]: u(0)=u'(0)=u''(0)=0\} \times \{u \in C^3[0,1]: u(0)=0\} $$ and
$$ \tilde{L}_0 \mb{u}(\rho):=\left (\begin{array}{c}
-\rho u_1'(\rho)+u_1(\rho)+\rho u_2'(\rho)-u_2(\rho) \\
\frac{1}{\rho}u_1'(\rho)-\rho u_2'(\rho) 
                       \end{array} \right ). $$
It should be emphasized that the definition of $\mc{D}(\tilde{L}_0)$ 
is a crucial point in the whole construction. 
One has to be very careful with the boundary conditions so as not to ``lose'' solutions one is actually interested in.
To see that this is not the case, recall that, in terms of the original perturbation $\varphi$, the field $\varphi_1$ is given by
$$ \varphi_1(t,r)=\frac{\tilde{\varphi}_t(t,r)}{T-t}=\frac{r^2 \varphi_t(t,r)}{T-t} $$
and by regularity, we may assume that $\varphi_t(t,r)=O(r)$ as $r \to 0+$.
Since $\phi_1$ is nothing but $\varphi_1$ in similarity coordinates, we may assume $\phi_1(\tau,\rho)=O(\rho^3)$ as $\rho \to 0+$.
Analogously, one obtains $\phi_2(\tau,\rho)=O(\rho)$ as $\rho \to 0+$. 
This justifies the chosen boundary conditions.

\begin{lemma}
\label{lem:tildeL0}
For any $\mb{u} \in \mc{D}(\tilde{L}_0)$ we have $\tilde{L}_0 \mb{u} \in \tilde{\mc{H}}$.
\end{lemma}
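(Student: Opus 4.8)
The plan is to split $\tilde{L}_0\mb{u}=(v_1,v_2)$ with
\[ v_1(\rho):=-\rho u_1'(\rho)+u_1(\rho)+\rho u_2'(\rho)-u_2(\rho),\qquad v_2(\rho):=\tfrac{1}{\rho}u_1'(\rho)-\rho u_2'(\rho), \]
and to check separately that $v_1\in\tilde{H}_1$ and $v_2\in\tilde{H}_2$. The only genuinely delicate point is the behavior of $v_2$ near $\rho=0$, where one divides by $\rho$; this is exactly where the boundary conditions encoded in $\mc{D}(\tilde{L}_0)$ are used, and checking it cleanly is what I regard as the main obstacle. To handle it I would first record the elementary ``division lemma'': if $f\in C^k[0,1]$ with $f(0)=0$, then $\rho\mapsto f(\rho)/\rho$ extends to a function in $C^{k-1}[0,1]$ taking the value $f'(0)$ at $\rho=0$. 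This follows from the representation $f(\rho)/\rho=\int_0^1 f'(\rho t)\,dt$ together with differentiation under the integral sign, which is legitimate because $f'\in C^{k-1}$.

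For the first component I would argue as follows. Since $u_1,u_2\in C^3[0,1]$, each of the functions $\rho u_1'$, $u_1$, $\rho u_2'$, $u_2$ lies in $C^2[0,1]$, hence $v_1\in C^2[0,1]$. The boundary conditions $u_1(0)=u_2(0)=0$ give $v_1(0)=0$, and a short computation yields $v_1'(\rho)=\rho\bigl(u_2''(\rho)-u_1''(\rho)\bigr)$, so that $v_1'(0)=0$. Therefore $v_1\in\tilde{H}_1$.

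For the second component I would apply the division lemma to $f:=u_1'\in C^2[0,1]$, which satisfies $f(0)=u_1'(0)=0$; this gives $\tfrac{1}{\rho}u_1'\in C^1[0,1]$ with value $u_1''(0)=0$ at the origin. Since $\rho u_2'\in C^2[0,1]\subset C^1[0,1]$, it follows that $v_2\in C^1[0,1]$, and $v_2(0)=u_1''(0)-0=0$, i.e.\ $v_2\in\tilde{H}_2$. Combining the two parts, $\tilde{L}_0\mb{u}\in\tilde{H}_1\times\tilde{H}_2=\tilde{\mc{H}}$. It is worth emphasizing that the step requiring care — and the reason $\mc{D}(\tilde{L}_0)$ is defined with the extra conditions $u_1'(0)=u_1''(0)=0$ — is precisely this application of the division lemma to $u_1'$: the vanishing $u_1'(0)=0$ is what makes $u_1'/\rho$ continuous at $\rho=0$, and $u_1''(0)=0$ is what makes its value there equal to $0$, so that the boundary condition defining $\tilde{H}_2$ is met.
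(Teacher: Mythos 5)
Your proof is correct and follows essentially the same route as the paper: a componentwise check in which the first component is immediate and the delicate point is the behavior of $\tfrac{1}{\rho}u_1'$ at $\rho=0$, handled via the boundary conditions $u_1'(0)=u_1''(0)=0$. The only cosmetic difference is that you extract the regularity of $u_1'/\rho$ from the Hadamard-type division lemma $f(\rho)/\rho=\int_0^1 f'(\rho t)\,dt$, whereas the paper applies de l'H\^opital's rule to the limits of $[\tilde{L}_0\mb{u}]_2$ and $[\tilde{L}_0\mb{u}]_2'$ at $\rho=0$; both yield the same conclusion.
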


\begin{proof}
 Let $\mb{u} \in \mc{D}(\tilde{L}_0)$. 
The first component $[\tilde{L}_0 \mb{u}]_1$ of $\tilde{L}_0 \mb{u}$ is obviously twice continuously differentiable and satisfies the boundary conditions $[\tilde{L}_0 \mb{u}]_1(0)=[\tilde{L}_0 \mb{u}]_1'(0)=0$.
 Thus, we have $[\tilde{L}_0 \mb{u}]_1 \in \tilde{H}_1$.
For the second component we obviously have $[\tilde{L}_0 \mb{u}]_2 \in C^1(0,1]$ and by 
applying de l'H\^ opital's rule we obtain
$$ \lim_{\rho \to 0+}[\tilde{L}_0 \mb{u}]_2(\rho)=\lim_{\rho \to 0+} \frac{u_1'(\rho)}{\rho}=\lim_{\rho \to 0+}u_1''(\rho)=0 $$ as well as
$$ \lim_{\rho \to 0+}[\tilde{L}_0 \mb{u}]_2'(\rho)=\lim_{\rho \to 0+}\frac{\rho u_1''(\rho)-u_1'(\rho)}{\rho^2}-u_2'(0)=\lim_{\rho \to 0+}\frac{u_1'''(\rho)}{2}-u_2'(0)=\frac{u_1'''(0)}{2}-u_2'(0) 
$$
which shows that $[\tilde{L}_0\mb{u}]_2 \in \tilde{H}_2$ as claimed.
\end{proof}

As a consequence of Lemma \ref{lem:tildeL0}, $\tilde{L}_0$ can be viewed as an unbounded operator 
$$  \tilde{L}_0: \mc{D}(\tilde{L}_0)\subset \mc{H} \to \mc{H} $$
and Lemma \ref{lem:dense} implies that $\tilde{L}_0$ is densely defined.
We also define another linear operator $L'$ that represents the ``potential term'' in Eq.~\eqref{eq:main1stlinear} by $\mc{D}(L')=\tilde{\mc{H}}$ and
$$ L' \mb{u}(\rho):=\left (\begin{array}{c}
-V_1(\rho)\int_0^\rho \rho' u_2(\rho')d\rho' \\ 0
                           \end{array} \right )
$$
where we have set
\begin{equation} 
\label{eq:V1}
V_1(\rho):=\frac{V(\rho)-2}{\rho^2}=\frac{2\cos(2f_0(\rho))-2}{\rho^2}=-\frac{16}{(1+\rho^2)^2},  
\end{equation}
cf.~Eq.~\eqref{eq:V}.
Obviously, we have $L'\mb{u} \in \tilde{\mc{H}}$ for any $\mb{u} \in \mc{D}(L')$ and thus, $L'$ may be viewed as a linear operator
$$ L': \mc{D}(L') \subset \mc{H} \to \mc{H}. $$ 

\begin{lemma}
\label{lem:L'bounded}
 The operator $L': \mc{D}(L')\subset \mc{H} \to \mc{H}$ is bounded. As a consequence, it uniquely extends to a bounded linear operator $L': \mc{H} \to \mc{H}$.
\end{lemma}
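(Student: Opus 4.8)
The plan is to exploit that the second component of $L'\mb{u}$ vanishes identically, so that for $\mb{u}\in\mc{D}(L')=\tilde{\mc{H}}$ one has $\|L'\mb{u}\|=\|[L'\mb{u}]_1\|_1$, with $[L'\mb{u}]_1\in\tilde H_1$ already known; hence it suffices to bound $\int_0^1|[L'\mb{u}]_1'(\rho)|^2\rho^{-2}\,d\rho$ by a constant times $\|u_2\|_2^2\le\|\mb{u}\|^2$. First I would record that $V_1(\rho)=-16(1+\rho^2)^{-2}$ is real--analytic on $[0,1]$, so both $V_1$ and $V_1'$ are bounded there, say $|V_1(\rho)|+|V_1'(\rho)|\le C$ for $\rho\in[0,1]$.

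Next, writing $I(\rho):=\int_0^\rho\rho'u_2(\rho')\,d\rho'$ so that $[L'\mb{u}]_1=-V_1 I$, differentiation gives $[L'\mb{u}]_1'=-V_1' I-\rho V_1 u_2$, whence
$$\rho^{-1}[L'\mb{u}]_1'(\rho)=-V_1'(\rho)\,\rho^{-1}I(\rho)-V_1(\rho)u_2(\rho).$$
The second summand is bounded in $L^2(0,1)$ by $C\|u_2\|_{L^2(0,1)}\le C\|u_2\|_2$, using the pointwise estimate $|u_2(\rho)|\le\|u_2\|_2$ valid for $u_2\in\tilde H_2$ (this is the computation carried out in the proof of Lemma~\ref{lem:HinC}). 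For the first summand the same pointwise bound yields $|I(\rho)|\le\|u_2\|_2\int_0^\rho\rho'\,d\rho'=\tfrac12\rho^2\|u_2\|_2$, hence $|\rho^{-1}I(\rho)|\le\tfrac12\rho\|u_2\|_2\le\tfrac12\|u_2\|_2$ on $[0,1]$; this is a bounded function and therefore lies in $L^2(0,1)$ with norm at most $\tfrac12\|u_2\|_2$. Adding the two contributions gives $\|L'\mb{u}\|=\|[L'\mb{u}]_1\|_1\le C\|u_2\|_2\le C\|\mb{u}\|$ for all $\mb{u}\in\tilde{\mc{H}}$, i.e.\ boundedness of $L'$ on its domain.

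Finally, the extension is automatic: $\tilde{\mc{H}}$ is dense in $\mc{H}$ by construction (indeed even $C_c^\infty(0,1]$ is, by Lemma~\ref{lem:dense}), and a bounded linear operator defined on a dense subspace of a Banach space extends uniquely to a bounded linear operator on the whole space. There is essentially no obstacle here; the only point deserving a little attention is the integrability of $\rho^{-1}[L'\mb{u}]_1'$ near $\rho=0$, which works precisely because the weight $\rho^{-2}$ in $\|\cdot\|_1$ is offset by the fact that the antiderivative $I(\rho)$ vanishes to second order as $\rho\to0+$, and because $V_1,V_1'$ have no singularity on $[0,1]$.
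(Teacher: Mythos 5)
Your proof is correct and follows essentially the same route as the paper: the same product--rule decomposition $[L'\mb{u}]_1'=-V_1'\int_0^\rho\rho'u_2(\rho')\,d\rho'-\rho V_1u_2$, sup bounds on $V_1,V_1'$, and reduction to $\|u_2\|_2\le\|\mb{u}\|$, followed by the standard density extension. The only (harmless, slightly more elementary) deviation is that the paper controls $\int_0^1\rho^{-2}\bigl|\int_0^\rho\rho'u_2\bigr|^2d\rho$ via Hardy's inequality, whereas you use the pointwise bound $|u_2(\rho)|\le\|u_2\|_2$ from Lemma~\ref{lem:HinC} to get $\bigl|\int_0^\rho\rho'u_2\bigr|\le\tfrac12\rho^2\|u_2\|_2$ directly.
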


\begin{proof}
By Lemma \ref{lem:HinC} we have
 \begin{equation}
 \label{eq:proofL'bound}
\|u\|_{L^2(0,1)}^2=\int_0^1 |u(\rho)|^2 d\rho \leq \int_0^1 \|u\|_2^2 d\rho=\|u\|_2^2 
\end{equation}
for all $u \in H_2$.
Now let $\mb{u} \in \tilde{\mc{H}}$. Then we have
\begin{align*}
 \|L'\mb{u}\|^2&=\int_0^1 \left |V_1'(\rho)\int_0^\rho \rho' u_2(\rho')d\rho' +V_1(\rho)\rho u_2(\rho) \right |^2 \frac{d\rho}{\rho^2} \\
 &\lesssim \|V_1'\|_{L^\infty(0,1)}^2\int_0^1 \frac{1}{\rho^2}\left |\int_0^\rho \rho' u_2(\rho')d\rho' \right |^2 d\rho +\|V_1\|_{L^\infty(0,1)}^2 \int_0^1 |u_2(\rho)|^2 d\rho \\
 &\lesssim \int_0^1 |\rho u_2(\rho)|^2 d\rho+\|u_2\|_2^2 \lesssim \|u_2\|_2^2 \\
 &\lesssim \|\mb{u}\|^2
\end{align*}
by Hardy's inequality and the estimate \eqref{eq:proofL'bound}.
Since $\tilde{\mc{H}}$ is dense in $\mc{H}$, $L'$ can be extended to all of $\mc{H}$ by an elementary result of functional analysis.
\end{proof}

In what follows we assume that $L'$ is defined on all of $\mc{H}$.
A preliminary operator formulation of Eq.~\eqref{eq:main1stlinear} is given by
\begin{equation}
 \label{eq:linoppre}
 \left \{ \begin{array}{l}
\frac{d}{d\tau}\Phi(\tau)=(\tilde{L}_0+L')\Phi(\tau) \mbox{ for }\tau>-\log T \\
\Phi(-\log T)=\mb{u}_T
          \end{array} \right .
\end{equation}
where $\Phi: [-\log T, \infty) \to \mc{H}$ and 
$$ \mb{u}_T(\rho):=\left ( \begin{array}{c} 
T\rho^2\left [g(T\rho)-\psi^T_t(0,T\rho) \right ] \\
T\rho \left [f'(T\rho)-\psi^T_r(0,T\rho) \right ] +2 \left [f(T\rho)-\psi^T(0,T\rho)
\right ]
                          \end{array} \right ). $$

\subsection{Well--posedness of the linearized evolution}

For simplicity, we consider the free evolution
\begin{equation}
 \label{eq:freeoppre}
 \left \{ \begin{array}{l}
\frac{d}{d\tau}\Phi(\tau)=\tilde{L}_0\Phi(\tau) \mbox{ for }\tau>-\log T \\
\Phi(-\log T)=\mb{u}_T
          \end{array} \right .
\end{equation}
first.
Note that the formal solution to this problem is ``$\Phi(\tau)=\exp(\tau \tilde{L}_0)\mb{u}_T$'', 
however, it is not clear how to define the exponential in this context since the operator $\tilde{L}_0$ is unbounded.
We apply semigroup theory to assign a precise meaning to ``$\exp(\tau \tilde{L}_0)$''.

\begin{proposition}
\label{prop:gen0}
 The operator $\tilde{L}_0$ is closable and its closure $L_0$ generates a strongly continuous one--parameter semigroup $S_0: [0,\infty) \to \mc{B}(\mc{H})$ satisfying
 $$ \|S_0(\tau)\|\leq e^{-\frac{1}{2}\tau} $$
 for all $\tau \geq 0$.
 In particular, the Cauchy problem
\begin{equation*}
 \left \{ \begin{array}{l}
\frac{d}{d\tau}\Phi(\tau)=L_0\Phi(\tau) \mbox{ for }\tau>-\log T \\
\Phi(-\log T)=\mb{u}_T
          \end{array} \right .
\end{equation*}
for $\mb{u}_T \in \mc{D}(L_0)$ has a unique solution which is given by
$$ \Phi(\tau)=S_0(\tau+\log T)\mb{u}_T $$
for all $\tau \geq -\log T$.
\end{proposition}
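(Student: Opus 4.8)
The natural tool is the Lumer--Phillips theorem, applied to the shifted operator $\tilde{L}_0+\tfrac12$. The plan is to verify its two hypotheses: (i) $\tilde{L}_0+\tfrac12$ is dissipative, and (ii) the range of $\mu-\tilde{L}_0$ is dense in $\mc{H}$ for some $\mu>\tfrac12$. Granting these, the version of the Lumer--Phillips theorem for operators with merely dense range shows that $\tilde{L}_0+\tfrac12$ is closable and that its closure generates a contraction semigroup $T(\tau)$; consequently $L_0:=\overline{\tilde{L}_0}$ is well defined, $L_0+\tfrac12$ generates $T(\tau)$, and $S_0(\tau):=e^{-\tau/2}T(\tau)$ is a strongly continuous semigroup generated by $L_0$ with $\|S_0(\tau)\|=e^{-\tau/2}\|T(\tau)\|\leq e^{-\tau/2}$. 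Density of $\mc{D}(\tilde{L}_0)$ is already provided by Lemma \ref{lem:dense}, so this covers the generation part of the proposition.

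\textbf{Dissipativity.} I would fix $\mb{u}\in\mc{D}(\tilde{L}_0)$ and compute $\mathrm{Re}(\tilde{L}_0\mb{u}|\mb{u})=\mathrm{Re}([\tilde{L}_0\mb{u}]_1|u_1)_1+\mathrm{Re}([\tilde{L}_0\mb{u}]_2|u_2)_2$ directly from the definitions of $(\cdot|\cdot)_1$ and $(\cdot|\cdot)_2$, integrating by parts in $\rho$. The boundary conditions $u_1(0)=u_1'(0)=u_1''(0)=0$ and $u_2(0)=0$ make every boundary contribution at $\rho=0$ vanish; in particular the extra requirement $u_1''(0)=0$ built into $\mc{D}(\tilde{L}_0)$ is exactly what forces $u_1'(\rho)/\sqrt{\rho}\to0$, which is needed to discard a boundary term coming from the weight $\rho^{-2}$. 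Collecting terms, I expect an exact identity of the form $\mathrm{Re}(\tilde{L}_0\mb{u}|\mb{u})=-\tfrac12\|\mb{u}\|^2-\tfrac12|u_1'(1)-u_2'(1)|^2$, so that $\mathrm{Re}((\tilde{L}_0+\tfrac12)\mb{u}|\mb{u})\leq0$. The constant $\tfrac12$ should emerge from the balance between the scaling term $-\rho\partial_\rho$ and the zeroth-order term $+u_1$ in the first component of $\tilde{L}_0$, while the mixed contributions involving both components telescope against one another up to a single boundary term at $\rho=1$ which completes the square.

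\textbf{Density of the range.} Here I would pick $\mu>\tfrac12$ and, for $\mb{f}$ in the dense subset $C_c^\infty(0,1]\times C_c^\infty(0,1]$ of $\mc{H}$ (Lemma \ref{lem:dense}), solve the resolvent system $(\mu-\tilde{L}_0)\mb{u}=\mb{f}$ by ODE methods. This is a linear first-order $2\times2$ system; the matrix multiplying $\partial_\rho$ has determinant $\rho^2-1$, so the only singular points in $[0,1]$ are the regular singular points $\rho=0$ and $\rho=1$. Diagonalizing that coefficient matrix (its eigenvalues are $1-\rho$ and $-1-\rho$) decouples the system into two scalar first-order equations, which can be solved by integrating factors; Frobenius analysis at $\rho=0$ selects the solution with the correct vanishing ($u_1\sim\rho^3$, $u_2\sim\rho$), and the analysis at $\rho=1$ selects the branch that stays smooth at the lightcone. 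One then verifies $\mb{u}\in C^3[0,1]^2$ with the required boundary behavior, i.e.\ $\mb{u}\in\mc{D}(\tilde{L}_0)$, which yields $\mathrm{ran}(\mu-\tilde{L}_0)\supseteq C_c^\infty(0,1]^2$ and hence density.

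\textbf{Main obstacle and conclusion.} The delicate step is the last one: $\rho=1$ is a singular point of the resolvent ODE, so one must show that the solution uniquely determined by the correct behavior at $\rho=0$ is automatically smooth up to and including $\rho=1$ — that is, that the ``regular'' branches at the two singular points are compatible. This is precisely where the special structure of $\tilde{L}_0$ (it is the free wave operator in similarity variables, governed by the one-dimensional wave equation for $\hat\psi$) enters. Finally, the statement about the Cauchy problem is the standard fact that for a strongly continuous semigroup with generator $L_0$ and datum $\mb{u}_T\in\mc{D}(L_0)$ the orbit $\tau\mapsto S_0(\tau)\mb{u}_T$ is the unique classical solution of the abstract equation; one only has to translate the time variable, using $S_0(0)=I$ and $\tau+\log T\geq0$ for $\tau\geq-\log T$.
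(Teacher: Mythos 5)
Your overall strategy coincides with the paper's: the generation part is proved via the Lumer--Phillips theorem, using (i) the dissipativity estimate with the constant $-\tfrac12$ and (ii) density of the range of $\mu-\tilde{L}_0$ for one value of $\mu$, together with Lemma \ref{lem:dense}; the translation of the semigroup statement into the statement about the Cauchy problem is standard, as you say. Your dissipativity step is correct: integrating by parts does yield exactly the identity $\Re(\tilde{L}_0\mb{u}|\mb{u})=-\tfrac12\|\mb{u}\|^2-\tfrac12|u_1'(1)-u_2'(1)|^2$, and the boundary terms at $\rho=0$ vanish because of the boundary conditions (small caveat: $u_1'(0)=0$ already gives $u_1'/\sqrt{\rho}\to 0$; the condition $u_1''(0)=0$ is what is really needed, namely for the cross term $\Re\,u_2'\overline{u_1'}/\rho$ at $\rho=0$, which requires $u_1'/\rho\to 0$).

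The genuine gap is in the range--density step. Diagonalizing the principal coefficient matrix does \emph{not} decouple the system: the diagonalizing transformation depends on $\rho$ and does not simultaneously diagonalize the zeroth--order part. Concretely, in the characteristic variables $v_1=\frac{u_1+\rho u_2}{2\rho}$, $v_2=\frac{u_1-\rho u_2}{2\rho}$ the system $(\mu-\tilde{L}_0)\mb{u}=\mb{f}$ becomes
$$(\rho-1)v_1'+\mu v_1-\tfrac{1}{\rho}v_2=g_1,\qquad (\rho+1)v_2'+\tfrac{1}{\rho}v_1+\mu v_2=g_2,$$
with $\mb{g}$ a fixed linear image of $\mb{f}$, so the off--diagonal couplings $\mp\frac1\rho$ survive and the ``two scalar equations solved by integrating factors'' plan collapses. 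The system is equivalent to a single second--order ODE (the equation of Lemma \ref{lem:actionL} with $V_1=0$, i.e.\ Eq.~\eqref{eq:evode} with $V\equiv 2$ and an inhomogeneity), which has regular singular points at $\rho=0$ and $\rho=1$; for generic $\mu$ one would have to construct a solution by variation of constants and then verify that it is $C^3$ up to $\rho=1$ and lies in $\mc{D}(\tilde{L}_0)$ --- precisely the compatibility issue at the lightcone that you flag but do not resolve. The paper sidesteps all of this by choosing the spectral parameter equal to $1$ (any single admissible value suffices for Lumer--Phillips): for this value the second--order equation degenerates to the first--order equation $-\rho(1-\rho^2)u_2'+(1+\rho^2)u_2=f_1+\rho^2 f_2$, solved explicitly by $u_2(\rho)=\frac{\rho}{1-\rho^2}\int_\rho^1\bigl(f_1(\rho')/\rho'^2+f_2(\rho')\bigr)d\rho'$ and $u_1(\rho)=\rho^2u_2(\rho)-\int_0^\rho\rho'\bigl[u_2(\rho')+f_2(\rho')\bigr]d\rho'$, and the only delicate point, $C^3$--regularity at $\rho=1$, is checked by Taylor--expanding the integrand at $\rho=1$. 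If you prefer to keep a general $\mu$, you must instead exhibit the (elementary, wave--equation type) homogeneous solutions of the free second--order ODE, build the solution regular at both endpoints, and carry out the domain--membership check at $\rho=1$; as written, your argument does not do this.
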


\begin{proof}
Let $\mb{u} \in \mc{D}(\tilde{L}_0)$.
In the following calculation, all integrals run from $0$ to $1$ and for simplicity we omit the arguments of the functions.
Integrating by parts, we obtain
\begin{align*}
 \Re(\tilde{L}_0 \mb{u}|\mb{u})&=\Re \int \frac{1}{\rho^2} \left [ -\rho u_1'+u_1+\rho u_2'-u_2 \right ]'\overline{u_1'} +\Re \int \left [\frac{1}{\rho}u_1'-\rho u_2' \right ]'\overline{u_2'} \\
&=-\Re \int \frac{1}{\rho}u_1'' \overline{u_1'}+\Re \int \frac{1}{\rho}u_2'' \overline{u_1'} \\
&\quad -\Re \int \frac{1}{\rho^2}u_1' \overline{u_2'}
+\Re \int \frac{1}{\rho}u_1'' \overline{u_2'}
-\Re \int \rho u_2'' \overline{u_2'}
-\int |u_2'|^2 \\
&=-\frac{1}{2}\left . \frac{|u_1'(\rho)|^2}{\rho} \right |_0^1-\frac{1}{2}\int \frac{1}{\rho^2}|u_1'|^2
+\Re \left.\frac{u_2'(\rho)\overline{u_1'(\rho)}}{\rho} \right |_0^1-\Re \int \frac{1}{\rho}u_2' \overline{u_1''}+\Re \int \frac{1}{\rho^2}u_2' \overline{u_1'} \\
&\quad -\Re \int \frac{1}{\rho^2}u_1' \overline{u_2'}
+\Re \int \frac{1}{\rho}u_1'' \overline{u_2'}
-\frac{1}{2} |u_2'(1)|^2-\frac{1}{2}\int |u_2'|^2 \\
&\leq -\frac{1}{2}\|\mb{u}\|^2
\end{align*}
by Cauchy's inequality. Note carefully that the apparently singular boundary terms at $\rho=0$ vanish by de l'H\^ opital's rule thanks to the boundary condition $u_1''(0)=0$.

Now let $\mb{f} \in C^\infty_c(0,1] \times C^\infty_c(0,1]$ and define
$$ f(\rho):=\frac{f_1(\rho)}{\rho^2}+f_2(\rho). $$
Then $f \in C^\infty_c(0,1]$.
Set
$$ u_2(\rho):=\frac{\rho}{1-\rho^2}\int_\rho^1 f(\rho')d\rho'. $$
Obviously, $u_2 \in C^\infty[0,1)$ and $u_2(0)=0$.
Expanding $f$ around $\rho=1$ yields
$$ f(\rho)=f(1)-f'(1)(1-\rho)+\frac{f''(1)}{2}(1-\rho)^2+R(\rho) $$
where the remainder $R \in C^\infty[0,1]$ satisfies the estimate $|R(\rho)|\leq C(1-\rho)^3$ for all $\rho \in [0,1]$.
Performing an analogous expansion for $f'$ and $f''$, one sees that $|R^{(k)}(\rho)|\leq C_k (1-\rho)^{3-k}$ for $k=0,1,2,3$ and all $\rho \in [0,1]$.
Thus, for $\tilde{R}(\rho):=\int_\rho^1 R(\rho')d\rho'$, we have the estimate
$$ \left |\frac{d^k}{d\rho^k}\frac{\tilde{R}(\rho)}{1-\rho}\right |\leq C_k (1-\rho)^{3-k} $$
for $k=0,1,2,3$ and $\rho \in [0,1]$ by the product rule.
This shows that $u_2$ can be written as
$$ u_2(\rho)=\frac{\rho}{1+\rho}\left [f(1)-\frac{f'(1)}{2}(1-\rho)+\frac{f''(1)}{6}(1-\rho)^2+\frac{\tilde{R}(\rho)}{1-\rho} \right ]$$
which implies $u_2 \in C^3[0,1]$.
Furthermore, we set
$$ u_1(\rho):=\rho^2 u_2(\rho)-\int_0^\rho \rho' \left [u_2(\rho')+f_2(\rho') \right ]d\rho' $$
and the above implies $u_1 \in C^3[0,1]$. Clearly, we have $u_1(0)=u_1'(0)=u_1''(0)=0$ and putting everything together we see that $\mb{u}=(u_1, u_2)\in \mc{D}(\tilde{L}_0)$.
By straightforward differentiation one verifies that
$$ (1-\tilde{L}_0)\mb{u}=\mb{f} $$
and, since $\mb{f}$ was arbitrary, we conclude that the range of $(1-\tilde{L}_0)$ is dense in $\mc{H}$ by Lemma \ref{lem:dense}.
The claim now follows from the Lumer--Phillips Theorem, see \cite{engel}, p.~83, Theorem 3.15.
 \end{proof}

\begin{remark}
 Note that, strictly speaking, we have not solved the free equation \eqref{eq:freeoppre} but a slightly more general problem where $\tilde{L}_0$ is substituted by its closure $L_0$. This is typical for the approach to PDEs with operator methods.
\end{remark}

\begin{remark}
The operator $S_0(\tau)$ is bounded for any $\tau \geq 0$. Therefore, we can apply it to general elements of $\mc{H}$ and not only to elements in the subspace $\mc{D}(L_0)$.
By doing so, we obtain more general ``solutions'' of the free equation.
Although those ``solutions'' do not actually solve the free equation (they solve it only in a weak sense), they are called \emph{mild solutions}, cf.~\cite{engel}, p.~146, Definition 6.3.
\end{remark}

We immediately obtain a result on the structure of the spectrum $\sigma(L_0)$ which will be useful later on.

\begin{corollary}
\label{cor:specL0}
For the spectrum $\sigma(L_0)$ of the generator $L_0$ we have
$$ \sigma(L_0)\subset \left \{\lambda \in \mathbb{C}: \Re \lambda \leq -\tfrac{1}{2} \right \} $$
and the resolvent of $L_0$ satisfies the estimate
$$ \|R_{L_0}(\lambda)\|\leq \frac{1}{\Re \lambda+\frac{1}{2}} $$
for all $\lambda \in \mathbb{C}$ with $\Re \lambda>-\frac{1}{2}$.
\end{corollary}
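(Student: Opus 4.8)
This is a direct corollary of Proposition~\ref{prop:gen0} together with the standard Laplace transform representation of the resolvent of a semigroup generator. The plan is as follows. First, I would invoke Proposition~\ref{prop:gen0}, which tells us that $L_0$ generates a strongly continuous semigroup $S_0$ with the uniform bound $\|S_0(\tau)\| \leq e^{-\tau/2}$ for all $\tau \geq 0$. In particular, the growth bound of $S_0$ is at most $-\tfrac12$.

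Next, I would recall the elementary fact from semigroup theory (see e.g.\ \cite{engel}, p.~55, Theorem~1.10) that if $A$ generates a $C_0$-semigroup $S$ with $\|S(\tau)\| \leq M e^{\omega \tau}$, then every $\lambda \in \mathbb{C}$ with $\Re\lambda > \omega$ belongs to the resolvent set $\rho(A)$, and the resolvent is given by the absolutely convergent Bochner integral
$$ R_A(\lambda) = \int_0^\infty e^{-\lambda \tau} S(\tau)\, d\tau. $$
Applying this with $A = L_0$, $M=1$, $\omega = -\tfrac12$, I would conclude first that $\{\lambda : \Re\lambda > -\tfrac12\} \subset \rho(L_0)$, hence $\sigma(L_0) \subset \{\lambda : \Re\lambda \leq -\tfrac12\}$, establishing the spectral inclusion.

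Finally, for the resolvent bound, I would estimate the integral directly: for $\Re\lambda > -\tfrac12$,
$$ \|R_{L_0}(\lambda)\| \leq \int_0^\infty e^{-\Re\lambda\, \tau}\, \|S_0(\tau)\|\, d\tau \leq \int_0^\infty e^{-(\Re\lambda + \frac12)\tau}\, d\tau = \frac{1}{\Re\lambda + \tfrac12}, $$
which is exactly the claimed estimate. There is really no hard step here; the only point requiring a moment's care is to make sure the hypothesis of the Laplace transform formula is met, namely that $\Re\lambda$ strictly exceeds the exponential growth rate $-\tfrac12$ of $S_0$, which is precisely the range $\Re\lambda > -\tfrac12$ in which we are asserting the bound.
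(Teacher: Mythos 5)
Your proposal is correct and follows essentially the same route as the paper, which likewise deduces the corollary from the growth bound $\|S_0(\tau)\|\leq e^{-\tau/2}$ of Proposition \ref{prop:gen0} together with the standard semigroup result in \cite{engel}, p.~55, Theorem 1.10; you have merely spelled out the Laplace transform representation and the integral estimate that underlie that cited theorem.
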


\begin{proof}
 This follows from the growth estimate $\|S_0(\tau)\|\leq e^{-\frac{1}{2}\tau}$ in Proposition \ref{prop:gen0} and \cite{engel}, p.~55, Theorem 1.10.
\end{proof}

By a standard result from semigroup theory we can now conclude the well--posedness of the linearized problem.

\begin{corollary}
\label{cor:gen}
 The operator $L_0+L'$ generates a strongly continuous one--parameter semigroup $S: [0,\infty) \to \mc{B}(\mc{H})$ satisfying
 $$ \|S(\tau)\|\leq e^{(-\frac{1}{2}+\|L'\|)\tau} $$
 for all $\tau \geq 0$.
 In particular, the Cauchy problem
\begin{equation*}
 \left \{ \begin{array}{l}
\frac{d}{d\tau}\Phi(\tau)=(L_0+L')\Phi(\tau) \mbox{ for }\tau>-\log T \\
\Phi(-\log T)=\mb{u}_T
          \end{array} \right .
\end{equation*}
for $\mb{u}_T \in \mc{D}(L_0)$ has a unique solution which is given by
$$ \Phi(\tau)=S(\tau+\log T)\mb{u}_T $$
for all $\tau \geq -\log T$.
\end{corollary}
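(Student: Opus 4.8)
The plan is to derive Corollary~\ref{cor:gen} as an immediate consequence of Proposition~\ref{prop:gen0}, Lemma~\ref{lem:L'bounded}, and the standard bounded perturbation theorem for generators of strongly continuous semigroups (see \cite{engel}, p.~158, Theorem~1.3). First I would note that, since $L'$ is defined on all of $\mathcal{H}$ and bounded by Lemma~\ref{lem:L'bounded}, the sum $L_0+L'$ is a well-defined closed, densely defined operator with $\mathcal{D}(L_0+L')=\mathcal{D}(L_0)$. By Proposition~\ref{prop:gen0}, $L_0$ generates a strongly continuous semigroup $S_0$ with $\|S_0(\tau)\|\leq e^{-\tau/2}$, i.e.\ with stability constant $M=1$ and growth exponent $\omega=-\tfrac{1}{2}$. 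The bounded perturbation theorem then yields that $L_0+L'$ generates a strongly continuous semigroup $S$ satisfying
\[ \|S(\tau)\|\leq M e^{(\omega+M\|L'\|)\tau}=e^{(-\frac{1}{2}+\|L'\|)\tau} \]
for all $\tau\geq 0$, which is exactly the claimed bound.

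For the second assertion I would invoke the general well-posedness result for the abstract Cauchy problem associated with a semigroup generator (see \cite{engel}, p.~146): if $A$ generates a strongly continuous semigroup $T(\cdot)$, then for every $x\in\mathcal{D}(A)$ the map $t\mapsto T(t)x$ is the unique continuously differentiable $\mathcal{D}(A)$-valued solution of $u'(t)=Au(t)$, $u(0)=x$. Applying this with $A=L_0+L'$ and $x=\mathbf{u}_T\in\mathcal{D}(L_0)=\mathcal{D}(L_0+L')$, and translating the time origin from $0$ to $-\log T$ (the equation is autonomous), shows that $\Phi(\tau)=S(\tau+\log T)\mathbf{u}_T$ is the unique solution of the stated Cauchy problem.

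I do not expect a genuine obstacle: this is a textbook perturbation argument once one has Proposition~\ref{prop:gen0} and Lemma~\ref{lem:L'bounded} in hand. The only point deserving a line of care is to confirm that the perturbation does not enlarge the domain — immediate from $\mathcal{D}(L')=\mathcal{H}$, so that $\mathcal{D}(L_0+L')=\mathcal{D}(L_0)$ and the hypothesis $\mathbf{u}_T\in\mathcal{D}(L_0)$ is precisely what the abstract theorem requires. Alternatively, one could give a self-contained proof via the Dyson--Phillips series $S(\tau)=\sum_{n\geq 0}S_n(\tau)$ with $S_{n+1}(\tau)=\int_0^\tau S_0(\tau-s)L'S_n(s)\,ds$, estimating $\|S_n(\tau)\|\leq e^{-\tau/2}(\|L'\|\tau)^n/n!$ termwise to obtain both convergence and the growth bound, but citing \cite{engel} is cleaner.
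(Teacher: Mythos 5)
Your argument is exactly the paper's: the authors also deduce the corollary directly from Proposition \ref{prop:gen0}, the boundedness of $L'$ (Lemma \ref{lem:L'bounded}), and the bounded perturbation theorem of \cite{engel}, p.~158, Theorem 1.3, with the well-posedness statement following from the standard theory of the abstract Cauchy problem. Your added remarks on $\mc{D}(L_0+L')=\mc{D}(L_0)$ and the Dyson--Phillips alternative are fine but not needed.
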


\begin{proof}
 This follows from the bounded perturbation theorem, see \cite{engel}, p.~158, Theorem 1.3.
\end{proof}

\subsection{Spectral analysis of the generator}

We analyse the spectrum of the generator $L_0+L'$ in more detail which will eventually allow us to sharpen the growth estimate given in Corollary \ref{cor:gen}.
A crucial observation in this respect is the fact that the perturbation $L'$ is compact.

\begin{lemma}
\label{lem:L'compact}
 The operator $L': \mc{H} \to \mc{H}$ is compact.
\end{lemma}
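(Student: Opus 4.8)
The plan is to show that $L'$ is a composition of a bounded operator with a compact embedding, so that compactness follows from the standard fact that a bounded operator composed with a compact one is compact. Concretely, I would factor $L' = \iota \circ A$, where $A$ maps $\mb{u} \in \mc{H}$ to the function $\rho \mapsto (-V_1(\rho)\int_0^\rho \rho' u_2(\rho')d\rho', 0)$ viewed as an element of a \emph{stronger} space, and $\iota$ is the inclusion of that stronger space into $\mc{H}$. The natural choice is to observe that the first component $w(\rho) := -V_1(\rho)\int_0^\rho \rho' u_2(\rho')d\rho'$ is actually one degree smoother than a generic element of $H_1$: since $V_1$ is smooth (indeed real-analytic) on $[0,1]$ and $u_2 \in C[0,1]$ with $u_2(0)=0$ (by Lemma~\ref{lem:HinC}), the function $w$ lies in $C^1[0,1]$ with $w(0)=0$ and a uniform bound $\|w\|_{C^1[0,1]} \lesssim \|u_2\|_\infty \lesssim \|\mb{u}\|$; moreover one checks $w'(0) = 0$ as well, using that $\int_0^\rho \rho' u_2(\rho')d\rho' = O(\rho^2)$.

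The key step is then to exhibit a compact embedding. I would introduce the auxiliary space $\tilde{X} := \{u \in C^1[0,1] : u(0) = u'(0) = 0\}$ with the norm $\|u\|_{\tilde{X}} := \|u'\|_{C[0,1]}$ (or equivalently $\|u''\|_{L^2}$ on a suitable completion), and show that $\tilde{X} \hookrightarrow H_1$ is compact. This follows from the Arzel\`a--Ascoli theorem: if $(u_n)$ is bounded in $\tilde{X}$, then $(u_n')$ is bounded in $C[0,1]$, hence (after passing to a subsequence) $u_n' \to v$ uniformly on $[0,\rho_0]$ for the relevant estimates; more precisely, since the $\|\cdot\|_1$ norm measures $\|u'/\rho\|_{L^2(0,1)}$ and $u_n'(0)=0$ gives $|u_n'(\rho)| \le \rho \|u_n\|_{\tilde X}$, one gets equi-integrability of $|u_n'(\rho)/\rho|^2$ near $0$, and uniform convergence of $u_n'$ away from $0$, which together yield convergence in $\|\cdot\|_1$ by dominated convergence. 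Equivalently, one may invoke a Rellich-type compactness: the map $u \mapsto u'/\rho$ sends bounded sets in $\tilde X$ to sets bounded in $C[0,1]$ hence precompact in $L^2(0,1)$, which is exactly precompactness in $H_1$.

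Putting the pieces together: given a bounded sequence $(\mb{u}_n)$ in $\mc{H}$, by Lemma~\ref{lem:HinC} the components $u_{n,2}$ are uniformly bounded in $C[0,1]$; hence the first components $w_n$ of $L'\mb{u}_n$ are uniformly bounded in $\tilde X$; by the compact embedding $\tilde X \hookrightarrow H_1$ a subsequence of $(w_n)$ converges in $H_1$; since the second component of $L'\mb{u}_n$ is identically zero, this gives a convergent subsequence of $(L'\mb{u}_n)$ in $\mc{H}$, proving compactness. The main obstacle is the bookkeeping near $\rho = 0$: one must verify carefully that $w_n \in \tilde{H}_1$ (i.e.\ $w_n(0) = w_n'(0) = 0$ and the Hardy-type integral is finite) with a bound depending only on $\|\mb{u}_n\|$, using the vanishing $u_{n,2}(0)=0$ to control $\int_0^\rho \rho' u_{n,2}(\rho')d\rho' = O(\rho^2)$ and the smoothness of $V_1$; and one must make the compactness of the embedding genuinely rigorous rather than formal, i.e.\ handle the weight $\rho^{-2}$ in $\|\cdot\|_1$ correctly when passing to the limit. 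Once that is in place the argument is routine. Alternatively, and perhaps most cleanly, one can factor $L' = L' \circ P$ where $P$ is the bounded projection onto the second component and note $L'$ followed by the embedding $H_1 \hookrightarrow C[0,1]$ (Lemma~\ref{lem:HinC}) and back; but I expect the cleanest write-up to be the $\tilde X \hookrightarrow H_1$ compactness argument above.
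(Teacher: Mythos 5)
There is a genuine gap, and it sits exactly at the step you flagged as the ``key step'': the compact embedding you propose does not exist. With the norm $\|u\|_{\tilde X}:=\|u'\|_{C[0,1]}$ on $\tilde X=\{u\in C^1[0,1]:u(0)=u'(0)=0\}$, a bounded set need not even lie in $H_1$: the condition $u'(0)=0$ together with a sup bound on $u'$ does \emph{not} give $|u'(\rho)|\le \rho\|u\|_{\tilde X}$ (that would require control of $u''$); for $u'(\rho)=\sqrt{\rho}$ one has $\int_0^1|u'(\rho)/\rho|^2d\rho=\infty$. Worse, even after restricting to functions with $|u'(\rho)|\le\rho$, the embedding into $H_1$ is \emph{not} compact: take $u_n'(\rho)=\rho\sin(n\pi\rho)$, so $\|u_n\|_{\tilde X}\le 1$ and $u_n\in\tilde H_1$, but $u_n'/\rho=\sin(n\pi\rho)$ is an orthogonal family in $L^2(0,1)$, hence $\|u_n-u_m\|_1^2=1$ for $n\neq m$ and there is no $\|\cdot\|_1$--convergent subsequence. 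The underlying error is the appeal to Arzel\`a--Ascoli with only \emph{boundedness} of $(u_n')$ in $C[0,1]$: without equicontinuity you cannot extract a uniformly (or $L^2$-) convergent subsequence. In your application this matters because $w'(\rho)=-V_1'(\rho)\int_0^\rho\rho'u_2(\rho')d\rho'-V_1(\rho)\rho\,u_2(\rho)$ contains $u_2$ itself pointwise, so convergence of $w_n$ in $H_1$ essentially forces strong $L^2$-convergence of $u_{n,2}$; a bound on $\|u_{n,2}\|_{L^\infty}$ alone can never produce that, and your reduction to the sup bound discards precisely the derivative information $\|u_{n,2}'\|_{L^2(0,1)}$ from which compactness must come.

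The repair is to keep that derivative information. The paper factors $L'=MA$ with $(A\mb{u})_1=\int_0^\rho\rho'u_2(\rho')d\rho'$ and $M\mb{u}=(-V_1u_1,0)$: the multiplier $M$ is merely bounded (Hardy's inequality), while $\|A\mb{u}\|=\|u_2\|_{L^2(0,1)}$, and compactness of $A$ follows from Rellich--Kondrachov, since $\|u_2\|_{L^2(0,1)}+\|u_2'\|_{L^2(0,1)}\lesssim\|u_2\|_2$ (Lemma \ref{lem:HinC}) makes $(u_{n,2})$ bounded in $H^1(0,1)$, hence precompact in $L^2(0,1)$. If you prefer an Arzel\`a--Ascoli route in the spirit of your write-up, use the full $\mc{H}$-bound rather than the sup bound: $|u_{n,2}(\rho)-u_{n,2}(\sigma)|\le\|u_{n,2}'\|_{L^2}|\rho-\sigma|^{1/2}\lesssim\|\mb{u}_n\||\rho-\sigma|^{1/2}$ gives equicontinuity, so $(u_{n,2})$ has a uniformly convergent subsequence, and then $w_n'/\rho=-V_1'(\rho)\rho^{-1}\int_0^\rho\rho'u_{n,2}-V_1(\rho)u_{n,2}(\rho)$ converges in $L^2(0,1)$ along it, i.e.\ $w_n$ converges in $H_1$. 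Either way, the compactness input must come from the $H_2$-norm of $u_2$, not from continuity bounds alone.
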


\begin{proof}
We define auxiliary operators $K$ on $C^\infty_c(0,1]$
and $A$, $M$ on $C^\infty_c(0,1] \times C^\infty_c(0,1]$ by
$$ (Ku)(\rho):=\int_0^\rho \rho' u(\rho')d\rho', \quad A\mb{u}:=\left (\begin{array}{c}Ku_2 \\ 0 \end{array} \right ), \quad M\mb{u}:=\left (\begin{array}{c}-V_1 u_1 \\ 0 \end{array} \right ). $$
Then we have (cf.~Lemma \ref{lem:HinC})
\begin{equation}
\label{eq:proofL'compact}
 \|A\mb{u}\|^2=\|Ku_2\|_1^2=\int_0^1 \frac{|(Ku_2)'(\rho)|^2}{\rho^2}d\rho=\|u_2\|_{L^2(0,1)}^2 \leq \|u_2'\|_{L^2(0,1)}^2=\|u_2\|_2^2 \leq \|\mb{u}\|^2 
 \end{equation}
 and, by using Hardy's inequality,
\begin{align*} 
\|M\mb{u}\|^2&=\|V_1 u_1\|_1^2=\int_0^1 \frac{|V_1'(\rho)u_1(\rho)+V_1(\rho)u_1'(\rho)|^2}{\rho^2}d\rho \\
&\lesssim \|V_1'\|_{L^\infty(0,1)}^2 \int_0^1 \frac{|u_1(\rho)|^2}{\rho^2}d\rho+\|V_1\|_{L^\infty(0,1)}^2 \int_0^1 \frac{|u_1'(\rho)|^2}{\rho^2}d\rho \\
&\lesssim \int_0^1 |u_1'(\rho)|^2 d\rho + \|u_1\|_1^2 \\
&\lesssim \|\mb{u}\|^2
\end{align*}
for all $\mb{u} \in C^\infty_c(0,1] \times C^\infty_c(0,1]$.
Consequently, by Lemma \ref{lem:dense}, $K$, $A$ and $M$ extend to bounded operators 
$K: H_2 \to H_1$, $A: \mc{H} \to \mc{H}$ and $M: \mc{H} \to \mc{H}$.
By construction, we have $L'=MA$ and therefore, by an elementary result of functional analysis, it suffices to prove compactness of $A$.

Now recall from the proof of Lemma \ref{lem:HinC} that
$\|u\|_{H^1(0,1)}\lesssim \|u\|_2$ for all $u \in H_2$ and suppose that $(\mb{u}_j) \subset \mc{H}$ is a bounded sequence. 
This implies that $(u_{2j})$ is a bounded sequence in $H_2$ and it follows that $(u_{2j})$ is a bounded sequence in $H^1(0,1)$. By the compact embedding $H^1(0,1) \subset \subset L^2(0,1)$ we conclude that $(u_{2j})$ has a subsequence which converges in $L^2(0,1)$.
Consequently, Eq.~\eqref{eq:proofL'compact} implies that $(A \mb{u}_j)$ has a convergent subsequence in $\mc{H}$ which proves the compactness of $A$.
\end{proof}

Lemma \ref{lem:L'compact} implies that adding the perturbation $L'$ only affects the point spectrum as will be shown next.
From now on we write $L:=L_0+L'$.

\begin{lemma}
\label{lem:specL}
If $\lambda \in \sigma(L)\backslash \sigma(L_0)$ then $\lambda \in \sigma_p(L)$.
\end{lemma}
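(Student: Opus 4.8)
The statement says that if $\lambda$ lies in $\sigma(L)$ but not in $\sigma(L_0)$, then $\lambda$ is an eigenvalue of $L=L_0+L'$. The natural approach is to exploit that $L'$ is compact (Lemma \ref{lem:L'compact}) together with the analytic Fredholm theorem or, more elementarily, a direct resolvent factorization. Since $\lambda \notin \sigma(L_0)$, the operator $\lambda - L_0$ is boundedly invertible with resolvent $R_{L_0}(\lambda) \in \mc{B}(\mc{H})$. I would write
$$
\lambda - L = \lambda - L_0 - L' = (\lambda - L_0)\bigl(1 - R_{L_0}(\lambda) L'\bigr).
$$
Because $L'$ is compact and $R_{L_0}(\lambda)$ is bounded, the operator $R_{L_0}(\lambda)L'$ is compact. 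Hence $1 - R_{L_0}(\lambda)L'$ is of the form "identity minus compact", and the classical Fredholm alternative (Riesz--Schauder theory) applies: $1 - R_{L_0}(\lambda)L'$ is either boundedly invertible, or else it has nontrivial kernel.

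First I would rule out the invertible case under the hypothesis $\lambda \in \sigma(L)$: if $1 - R_{L_0}(\lambda)L'$ were invertible, then $\lambda - L$ would be invertible as a product of two bounded invertible operators, with inverse $\bigl(1 - R_{L_0}(\lambda)L'\bigr)^{-1}R_{L_0}(\lambda)$, contradicting $\lambda \in \sigma(L)$. Therefore $\ker\bigl(1 - R_{L_0}(\lambda)L'\bigr) \neq \{0\}$; pick a nonzero $\mb{u}$ in this kernel, so $\mb{u} = R_{L_0}(\lambda)L'\mb{u}$. Then $\mb{u} \in \mc{D}(L_0) = \mc{D}(L)$ (the range of the resolvent is the domain), and applying $\lambda - L_0$ to both sides gives $(\lambda - L_0)\mb{u} = L'\mb{u}$, i.e. $(\lambda - L)\mb{u} = 0$. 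Since $\mb{u}\neq 0$, this exhibits $\lambda$ as an eigenvalue of $L$, so $\lambda \in \sigma_p(L)$.

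The only genuine subtlety is making sure the Fredholm alternative is invoked in a legitimate form — i.e. that "$1$ minus a compact operator on a Banach (here Hilbert) space" is either invertible or has nonzero kernel with the kernel finite-dimensional; this is standard Riesz theory and can simply be cited (e.g. from the functional analysis reference already used in the paper). There is no heavy computation here: the proof is essentially the resolvent factorization identity plus one application of Riesz--Schauder. The main point to be careful about is the bookkeeping of domains — verifying that elements of $\operatorname{ran} R_{L_0}(\lambda)$ indeed lie in $\mc{D}(L_0)$ and that $L_0+L'$ acts on them as expected — but since $L'$ is bounded and everywhere defined, $\mc{D}(L) = \mc{D}(L_0)$ and this is immediate.
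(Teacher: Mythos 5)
Your proposal is correct and follows essentially the same route as the paper: the resolvent factorization of $\lambda-L$ combined with compactness of $L'$ and the dichotomy for ``identity minus compact'' (the paper phrases this via the spectral theorem for compact operators applied to $1-L'R_{L_0}(\lambda)$, i.e.\ with the factors in the other order, which is only a cosmetic difference). The domain bookkeeping you flag is handled the same way in the paper, by noting that the constructed kernel element lies in $\operatorname{ran}R_{L_0}(\lambda)\subset\mc{D}(L_0)=\mc{D}(L)$.
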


\begin{proof}
The proof is actually a standard argument but in order to keep things as self--contained as possible, we present it in full detail.
Let $\lambda \in \sigma(L)\backslash \sigma(L_0)$. 
The well--known identity 
$$\lambda-L=(1-L'R_{L_0}(\lambda))(\lambda-L_0)$$
 implies that $1 \in \sigma(L'R_{L_0}(\lambda))$ since otherwise the operator $\lambda-L$ would be bounded invertible which contradicts $\lambda \in \sigma(L)$.
However, as a composition of a compact (see Lemma \ref{lem:L'compact}) and a bounded operator, $L'R_{L_0}(\lambda)$ is compact and it follows that $1 \in \sigma_p(L'R_{L_0}(\lambda))$ by the spectral theorem for compact operators (see e.g.,~\cite{kato}, p.~185, Theorem 6.26).
As a consequence, there exists a nonzero $\mb{f} \in \mc{H}$ such that $L'R_{L_0}(\lambda)\mb{f}=\mb{f}$.
Set $\mb{u}:=R_{L_0}(\lambda)\mb{f}$.
Then $\mb{u} \in \mc{D}(L_0)=\mc{D}(L)$, $\mb{u} \not=0$, and 
$(\lambda-L)\mb{u}=0$ which shows that $\lambda \in \sigma_p(L)$.
\end{proof}

Before we can study the spectrum of $L$ in detail, we need one more technical lemma. 
The problem is that, strictly speaking, we do not know how the operator $L$ acts on general $\mb{u} \in \mc{D}(L)$.
We only know that its action on $\mb{u} \in \mc{D}(\tilde{L}_0)$ is given by $(\tilde{L}_0+L')\mb{u}$.
However, the operator $L_0$ (and consequently $L$) is given by the closure of $\tilde{L}_0$ which is a priori an abstract object.
The following result closes this gap.

\begin{lemma}
\label{lem:actionL}
Let $\mb{u} \in \mc{D}(L)$. Then $\mb{u} \in C^1(0,1) \times C^1(0,1) \cap C[0,1] \times C[0,1]$, $\mb{u}(0)=0$ and $(\lambda-L)\mb{u}=\mb{f}$ for $\mb{f} \in \mc{H}$ implies
$$ 
u_1(\rho)=\rho^2 u_2(\rho)+(\lambda-2)\int_0^\rho \rho' u_2(\rho')d\rho'-\int_0^\rho \rho' f_2(\rho')d\rho' $$
and
\begin{align}
\label{eq:actionL}
-(1-\rho^2)&u''(\rho)-2\frac{1-\rho^2}{\rho}u'(\rho)+2 \lambda \rho u'(\rho)+\lambda (\lambda+1)u(\rho)+\frac{V(\rho)}{\rho^2}u(\rho) \\ &=\frac{f_1(\rho)}{\rho^2}+f_2(\rho)+\frac{\lambda-1}{\rho^2}\int_0^\rho \rho' f_2(\rho')d\rho' \nonumber
\end{align}
for all $\rho \in (0,1)$ where $u \in C^2(0,1)$ is defined by
$$ u(\rho):=\frac{1}{\rho^2}\int_0^\rho \rho' u_2(\rho')d\rho'. $$
\end{lemma}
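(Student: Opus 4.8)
The plan is to work first on the dense core $\mc{D}(\tilde{L}_0)$, establish the two claimed identities there by direct computation, and then pass to the closure by a limiting argument. First I would take $\mb{u} \in \mc{D}(\tilde{L}_0)$ and suppose $(\lambda - \tilde{L}_0 - L')\mb{u} = \mb{f}$. Writing out the second component of this equation gives $\lambda u_2 - \tfrac{1}{\rho}u_1' + \rho u_2' = f_2$, i.e. $\tfrac{1}{\rho}u_1' = \lambda u_2 + \rho u_2' - f_2$. Multiplying by $\rho$ and integrating from $0$ to $\rho$ (the boundary term at $0$ vanishes since $u_1(0)=0$), and integrating $\int_0^\rho \rho' \cdot \rho' u_2'(\rho')\,d\rho'$ by parts, yields exactly
$$ u_1(\rho)=\rho^2 u_2(\rho)+(\lambda-2)\int_0^\rho \rho' u_2(\rho')\,d\rho'-\int_0^\rho \rho' f_2(\rho')\,d\rho'. $$
Then I would substitute this expression for $u_1$ (and hence for $u_1'$, $u_1''$) into the first component of $(\lambda - \tilde{L}_0 - L')\mb{u} = \mb{f}$, using the definition $u(\rho) = \rho^{-2}\int_0^\rho \rho' u_2(\rho')\,d\rho'$ so that $\rho^2 u(\rho) = \int_0^\rho \rho' u_2\,d\rho'$, $(\rho^2 u)' = \rho u_2$, hence $u_2 = \rho^{-1}(\rho^2 u)' = 2u + \rho u'$ and $u_2' = 3u' + \rho u''$. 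After collecting terms and using $V_1 = (V-2)/\rho^2$, this should reorganize into the claimed second-order equation \eqref{eq:actionL} for $u$; this is the one genuinely computational step, but it is routine bookkeeping once the substitutions $u_2 = 2u+\rho u'$, $u_2' = 3u'+\rho u''$ are in hand.

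Next I would handle a general $\mb{u} \in \mc{D}(L) = \mc{D}(L_0)$. By definition of the closure, there is a sequence $(\mb{u}_j) \subset \mc{D}(\tilde{L}_0)$ with $\mb{u}_j \to \mb{u}$ and $\tilde{L}_0\mb{u}_j \to L_0\mb{u}$ in $\mc{H}$; set $\mb{f}_j := (\lambda - \tilde{L}_0 - L')\mb{u}_j$, so $\mb{f}_j \to \mb{f} := (\lambda-L)\mb{u}$ in $\mc{H}$ (using boundedness of $L'$, Lemma \ref{lem:L'bounded}). By Lemma \ref{lem:HinC}, $\mc{H} \hookrightarrow C[0,1]\times C[0,1]$, so $\mb{u}_j \to \mb{u}$ and $\mb{f}_j \to \mb{f}$ uniformly on $[0,1]$, and in particular $u_{2j} \to u_2$, $f_{2j}\to f_2$ uniformly. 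Passing to the limit in the integral identity for $u_{1j}$ (uniform convergence makes $\int_0^\rho \rho' u_{2j}\,d\rho' \to \int_0^\rho \rho' u_2\,d\rho'$, etc.) gives the stated formula for $u_1$ on $[0,1]$. This formula, together with $u_2 \in C[0,1]$ and the fundamental theorem of calculus, shows $u_1 \in C^1(0,1)$, hence $\mb{u} \in C^1(0,1)\times C^1(0,1) \cap C[0,1]\times C[0,1]$; the condition $\mb{u}(0)=0$ is already contained in Lemma \ref{lem:HinC}. Likewise the formula for $u_1$ shows $u(\rho) = \rho^{-2}\int_0^\rho \rho' u_2\,d\rho' \in C^1(0,1)$, and since $\rho^2 u = \int_0^\rho \rho' u_2\,d\rho'$ has $(\rho^2 u)' = \rho u_2 \in C^1(0,1)$ (as $u_1 \in C^1$ forces $u_2 \in C^1(0,1)$ via the relation $u_2 = 2u + \rho u'$ rearranged appropriately), one gets $u \in C^2(0,1)$.

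Finally, to obtain \eqref{eq:actionL} for general $\mb{u} \in \mc{D}(L)$ I would argue that the identity \eqref{eq:actionL} holds for each $\mb{u}_j$ (by the core computation above), rewrite it in the equivalent integrated form — since the left side of \eqref{eq:actionL} is, after the substitution, a first derivative of an explicit expression in $u_j, u_j', $ and $\int f_{2j}$ — and then pass to the uniform limit. Concretely, the cleanest route is to observe that \eqref{eq:actionL} is equivalent to a \emph{first-order} ODE for $u'$ with coefficients and inhomogeneity that are continuous in the data, so that $u_j' \to u'$ and $u_j \to u$ locally uniformly on $(0,1)$ forces the limiting equation. The main obstacle I anticipate is purely organizational: carrying out the substitution of the explicit $u_1$ into the first-component equation without error, and making sure the regularity bootstrap ($u_2 \in C^1(0,1)$, then $u \in C^2(0,1)$) is extracted in the right order rather than assumed. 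There is no serious analytic difficulty — the boundary terms at $\rho=0$ are controlled by the built-in vanishing conditions, exactly as in the proof of Proposition \ref{prop:gen0}, and everything else is continuity of elementary integral operators under uniform convergence.
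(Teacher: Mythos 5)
Your overall plan---derive the two identities on the core $\mc{D}(\tilde{L}_0)$ and pass to the limit along an approximating sequence, using Lemma \ref{lem:HinC} to turn $\mc{H}$--convergence into uniform convergence---is viable and runs parallel to the paper's argument. However, the regularity bootstrap in your middle paragraph is circular, and this is exactly the delicate point of the lemma. The integral formula
$$ u_1(\rho)=\rho^2 u_2(\rho)+(\lambda-2)\int_0^\rho \rho' u_2(\rho')d\rho'-\int_0^\rho \rho' f_2(\rho')d\rho' $$
does \emph{not} give $u_1\in C^1(0,1)$ from $u_2\in C[0,1]$ and the fundamental theorem of calculus alone: the two integrals are $C^1$, but the term $\rho^2 u_2(\rho)$ is only as regular as $u_2$, so $u_1\in C^1(0,1)$ is equivalent to $u_2\in C^1(0,1)$---which you then propose to deduce \emph{from} $u_1\in C^1$. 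Similarly, $u\in C^2(0,1)$ requires $(\rho^2u)'=\rho u_2$ to be $C^1$, i.e.\ again $u_2\in C^1(0,1)$; for a general element of the closure's domain, $u_2$ is a priori only continuous, so at that stage nothing in your argument supplies this.

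The repair is to extract the $C^2$ regularity of $u$ (equivalently $C^1$ regularity of $u_2$) \emph{first}, from the differential equation, and only afterwards read off the regularity of $u_2$ and $u_1$. Your last paragraph contains the right germ, but the decisive point must be made explicit: since each $u_j$ satisfies \eqref{eq:actionL} with data $\mb{f}_j$, solve for the highest derivative---the leading coefficient $1-\rho^2$ (or $\rho(1-\rho^2)$ in the equivalent first--order form in $u_{2j}$) is bounded away from zero on every $[a,b]\subset(0,1)$---so the locally uniform convergence of $u_j$, $u_j'$ (which follows from the explicit formulas $u_j(\rho)=\rho^{-2}\int_0^\rho\rho'u_{2j}$, $u_j'(\rho)=\rho^{-1}u_{2j}(\rho)-2\rho^{-3}\int_0^\rho\rho'u_{2j}$ together with the uniform convergence of $u_{2j}$ and $\mb{f}_j$) forces locally uniform convergence of $u_j''$; the standard theorem on $C^1$ limits then yields $u\in C^2(0,1)$ and the limiting equation \eqref{eq:actionL}, whence $u_2=2u+\rho u'\in C^1(0,1)$ and finally $u_1\in C^1(0,1)$ from the integral formula. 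This is precisely the mechanism of the paper's proof, which combines the two components of $(1-\tilde{L}_0)\mb{u}_j=\mb{f}_j$ to eliminate $u_{1j}'$, solves for $u_{2j}'$, and concludes $u_{2j}'\to u_2'$ in $L^\infty(a,b)$ for every $[a,b]\subset(0,1)$ before any identity involving $\lambda$ and $L'$ is written down. With the steps reordered in this way your proposal goes through; as written, the middle paragraph assumes the regularity it is supposed to prove.
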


\begin{proof}
 Let $\mb{u} \in \mc{D}(L)=\mc{D}(L_0)$. Since $\mb{u} \in \mc{H}$, Lemma \ref{lem:HinC} implies $\mb{u} \in C[0,1] \times C[0,1]$ and $\mb{u}(0)=0$.
 By definition of the closure, there exists a sequence $(\mb{u}_j) \subset \mc{D}(\tilde{L}_0) \subset C^1[0,1] \times C^1[0,1]$ such that $\mb{u}_j \to \mb{u}$ and $\tilde{L}_0 \mb{u}_j \to L_0 \mb{u}$ in $\mc{H}$.
 This implies $(1-\tilde{L}_0)\mb{u}_j \to (1-L_0)\mb{u}$ in $\mc{H}$ and we set $\mb{f}_j:=(1-\tilde{L}_0)\mb{u}_j$ and $\mb{f}:=(1-L_0)\mb{u}$.
 Then $\mb{f} \in \mc{H}$ and $\mb{f}_j \to \mb{f}$.
 Explicitly, $(1-\tilde{L}_0)\mb{u}_j=\mb{f}_j$ reads
 \begin{equation}
 \label{eq:proofactionL}
\left \{ \begin{array}{l}
\rho u_{1j}'(\rho)-\rho u_{2j}'(\rho)+u_{2j}(\rho)=f_{1j}(\rho)\\
-\frac{1}{\rho}u_{1j}'(\rho)+\rho u_{2j}'(\rho)+u_{2j}(\rho)=f_{2j}(\rho)
            \end{array} \right . 
\end{equation}
for all $\rho \in [0,1]$.
Multiplying the second equation in \eqref{eq:proofactionL} by $\rho^2$ and adding the result to the first equation we obtain
$$ -\rho (1-\rho^2)u_{2j}'(\rho)+(1+\rho^2)u_{2j}(\rho)=f_{1j}(\rho)+\rho^2 f_{2j}(\rho). $$
According to Lemma \ref{lem:HinC}, convergence in $\mc{H}$ implies uniform convergence of the individual components. Therefore, $u_{2j} \to u_2$, $f_{1j} \to f_1$ and $f_{2j} \to f_2$ in $L^\infty(0,1)$ and thus, the above equation implies that $u_{2j}'$ converges in $L^\infty(a,b)$ for any $[a,b] \subset (0,1)$.
Note that we cannot include the endpoints due to the singular factors in front of $u_{2j}'$.
Consequently, both $u_{2j}$ and $u_{2j}'$ converge in $L^\infty(a,b)$ and by an elementary result of analysis, this implies that the limiting function $u_2$ belongs to $C^1[a,b]$ and $u_{2j}' \to u_2'$.
Since $[a,b] \subset (0,1)$ was arbitrary, we obtain $u_2 \in C^1(0,1)$ and
the first equation in \eqref{eq:proofactionL} immediately implies $u_1 \in C^1(0,1)$ as claimed.

As a consequence, the operator $L$ acts as a classical differential operator on $\mb{u} \in \mc{D}(L)$ and $(\lambda-L)\mb{u}=\mb{f}$ for an $\mb{f} \in \mc{H}$ implies
\begin{equation}
\label{eq:proofactionL2}
\left \{ \begin{array}{l}
\rho u_1'(\rho)+(\lambda-1)u_1(\rho)-\rho u_2'(\rho)+u_2(\rho)+V_1(\rho)\int_0^\rho \rho' u_2(\rho')d\rho' 
=f_1(\rho)\\
-\frac{1}{\rho}u_1'(\rho)+\rho u_2'(\rho)+\lambda u_2(\rho)=f_2(\rho)
            \end{array} \right . 
\end{equation}
for all $\rho \in (0,1)$.
Thanks to the boundary condition $u_1(0)=0$, the second equation in \eqref{eq:proofactionL2} necessarily implies
$$
u_1(\rho)=\rho^2 u_2(\rho)+(\lambda-2)\int_0^\rho \rho' u_2'(\rho')d\rho'-\int_0^\rho \rho' f_2(\rho')d\rho' $$
and inserting this into the first equation in \eqref{eq:proofactionL2} we obtain
\begin{align*} 
-\rho (1-\rho^2)u_2'(\rho)&+u_2(\rho)+(2\lambda-1)\rho^2 u_2(\rho)+(\lambda-1)(\lambda-2)\int_0^\rho \rho' u_2(\rho')d\rho' \\
&+V_1(\rho)\int_0^\rho \rho' u_2(\rho')d\rho'=f_1(\rho)+\rho^2 f_2(\rho)+(\lambda-1)\int_0^\rho \rho' f_2(\rho')d\rho'. \nonumber
\end{align*}
Defining $u \in C^2(0,1)$ by
$$ u(\rho):=\frac{1}{\rho^2}\int_0^\rho \rho' u_2(\rho')d\rho' $$ we see that
$u$ satisfies Eq.~\eqref{eq:actionL} where we use
$V(\rho)=\rho^2 V_1(\rho)+2$, cf.~Eq.~\eqref{eq:V1}.
\end{proof}

Now we are ready to establish the connection between the spectrum of the generator $L$ and the mode stability analysis from Sec.~\ref{sec:mode}.

\begin{proposition}
\label{prop:specL}
 Suppose $\lambda \in \sigma_p(L)$ and $\mathrm{Re}\lambda \geq -\frac{1}{2}$. Then there exists a nonzero function $u(\cdot,\lambda) \in C^\infty[0,1]$ that solves Eq.~\eqref{eq:evode}.
\end{proposition}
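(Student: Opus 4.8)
The plan is to read off from an eigenfunction of $L$ a classical solution of Eq.~\eqref{eq:evode} on $(0,1)$ via Lemma~\ref{lem:actionL}, and then to upgrade its regularity at the two endpoints $\rho=0$ and $\rho=1$ by Frobenius' method; the endpoint $\rho=1$ is the delicate one and it is the only place where the hypothesis $\Re\lambda\geq-\frac{1}{2}$ is used.

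\emph{Extracting the equation.} Since $\lambda\in\sigma_p(L)$, pick a nonzero $\mb{u}=(u_1,u_2)\in\mc{D}(L)$ with $(\lambda-L)\mb{u}=0$ and apply Lemma~\ref{lem:actionL} with $\mb{f}=0$. Then $u(\rho):=\rho^{-2}\int_0^\rho\rho'u_2(\rho')\,d\rho'$ belongs to $C^2(0,1)$, solves Eq.~\eqref{eq:evode} on $(0,1)$, and $u_1(\rho)=\rho^2u_2(\rho)+(\lambda-2)\int_0^\rho\rho'u_2(\rho')\,d\rho'$. The function $u$ is nontrivial: if $u\equiv 0$ then $\int_0^\rho\rho'u_2=0$ for all $\rho$, hence $u_2\equiv 0$, hence $u_1\equiv 0$ by the above identity, contradicting $\mb{u}\neq 0$. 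Since on $(0,1)$ the coefficients of Eq.~\eqref{eq:evode} are real-analytic and the leading coefficient $-(1-\rho^2)$ is nonvanishing, $u$ is real-analytic on $(0,1)$, and it remains only to show that $u$ extends smoothly across $\rho=0$ and $\rho=1$.

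\emph{Regularity at $\rho=0$.} By Lemma~\ref{lem:HinC}, $u_2\in C[0,1]$, so $|\int_0^\rho\rho'u_2|\leq C\rho^2$ and $u$ is bounded near $\rho=0$. Equation~\eqref{eq:evode} has a regular singular point at $\rho=0$ with indices $\{1,-2\}$, so a fundamental system near $\rho=0$ consists of an analytic solution behaving like $\rho$ and a second solution whose leading behaviour is $\rho^{-2}$ (possibly with an added logarithmic term proportional to the first). A solution bounded near $0$ cannot have a nonzero component along the second solution, hence $u$ is a constant multiple of the analytic one and is therefore analytic in a neighbourhood of $\rho=0$.

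\emph{Regularity at $\rho=1$ --- the main point.} Differentiating $\rho^2u(\rho)=\int_0^\rho\rho'u_2(\rho')\,d\rho'$ yields the identity $u_2(\rho)=\rho u'(\rho)+2u(\rho)$ on $(0,1)$, so $u_2'=3u'+\rho u''$ (classical on $(0,1)$, cf.~Lemma~\ref{lem:actionL}). Since $\mb{u}\in\mc{H}=H_1\times H_2$ we have $u_2\in H_2$, i.e. $u_2'\in L^2(0,1)$; in particular $3u'+\rho u''\in L^2$ on a left neighbourhood of $\rho=1$. At $\rho=1$ Eq.~\eqref{eq:evode} has a regular singular point with indices $\{0,1-\lambda\}$, and a fundamental system near $\rho=1$ is given by an analytic solution $u_a(\rho)=1+O(1-\rho)$ together with a second solution $u_b$ that is not smooth at $\rho=1$: its asymptotic expansion carries a term $(1-\rho)^{1-\lambda}$ when $\lambda\notin\mathbb{Z}$, and in the resonant integer cases a negative integer power of $1-\rho$ or a term $(1-\rho)^k\log(1-\rho)$. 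Inserting these behaviours into $3u'+\rho u''$ one finds, in every case, a leading singularity --- $(1-\rho)^{-1-\lambda}$ in the generic case --- whose square is not integrable near $\rho=1$ precisely because $\Re\lambda\geq-\frac{1}{2}$; for the analytic branch $u_a$ the expression $3u_a'+\rho u_a''$ is bounded near $\rho=1$. Writing $u=c_au_a+c_bu_b$ near $\rho=1$, the membership $3u'+\rho u''\in L^2$ forces $c_b=0$, so $u=c_au_a$ extends analytically across $\rho=1$.

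Combining the three steps, $u$ is analytic on all of $[0,1]$; in particular $u\in C^\infty[0,1]$, it is nontrivial, and it solves Eq.~\eqref{eq:evode}, which is the claim. The step requiring the most care is the analysis at $\rho=1$: one must carry out Frobenius' method there cleanly --- in particular deal with the resonant exponents that generate logarithmic terms and negative integer powers --- and turn the abstract statement $u_2\in H_2$ into the quantitative fact that $u_2'$ fails to be square-integrable near $\rho=1$ for the non-analytic solution exactly when $\Re\lambda\geq-\frac{1}{2}$.
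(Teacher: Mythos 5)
Your proposal is correct and follows essentially the same route as the paper: extract the scalar ODE from the eigenfunction via Lemma~\ref{lem:actionL}, use Frobenius' method at both endpoints, and exclude the non-analytic branch at $\rho=1$ through the square-integrability of $u_2'=3u'+\rho u''$ (which is exactly the paper's condition $\int|(\tfrac{1}{p}(p^2u)')'|^2<\infty$), with $\Re\lambda\geq-\tfrac12$ entering only there. The only cosmetic differences are that the paper rules out the singular branch at $\rho=0$ via de l'H\^opital ($u(0+)=0$) rather than boundedness, and it phrases the $\rho=1$ step as a contradiction (``suppose $u$ is not smooth''), which sidesteps having to assert that the second Frobenius solution is genuinely non-smooth in the resonant case $\lambda=0$ --- a harmless point, since if it were smooth the conclusion would be immediate.
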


\begin{proof}
 Let $\lambda \in \sigma_p(L)$ with $\Re \lambda\geq -\frac{1}{2}$ and suppose $\mb{u} \in \mc{D}(L_0)$ is an associated eigenfunction.
 According to Lemma \ref{lem:actionL}, $(\lambda-L)\mb{u}=0$ implies
\begin{equation} 
\label{eq:proofL'sigmap3}
u_1(\rho)=\rho^2 u_2(\rho)+(\lambda-2)\int_0^\rho \rho' u_2(\rho')d\rho'  
\end{equation}
and 
$$ -(1-\rho^2)u''(\rho)-2\frac{1-\rho^2}{\rho}u'(\rho)+2 \lambda \rho u'(\rho)+\lambda (\lambda+1)u(\rho)+\frac{V(\rho)}{\rho^2}u(\rho)=0 $$
for 
$$ u(\rho):=\frac{1}{\rho^2}\int_0^\rho \rho' u_2(\rho')d\rho'. $$
Consequently, $u \in C^2(0,1)$ satisfies Eq.~\eqref{eq:evode} for $\rho \in (0,1)$.
Note that Eq.~\eqref{eq:proofL'sigmap3} also implies that $u_2$ (and therefore $u$) is nonzero because otherwise, we would have $\mb{u}=0$, a contradiction to the assumption that $\mb{u}$ is an eigenfunction.
It remains to show that in fact $u \in C^\infty[0,1]$. 
By standard ODE theory it follows immediately that $u \in C^\infty(0,1)$ since the coefficients of the equation are smooth on $(0,1)$.
Furthermore, by Frobenius' method (see e.g., \cite{miller}, \S 6.2) we know the asymptotic behavior of solutions to Eq.~\eqref{eq:evode}.
Around $\rho=0$, the analytic solution behaves like $\rho$ and the nonanalytic one like $\rho^{-2}$, see Sec.~\ref{sec:mode}.
However, by de l'H\^ opital's rule we obtain
$$ \lim_{\rho \to 0+}u(\rho)=\lim_{\rho \to 0+}\frac{\int_0^\rho \rho' u_2(\rho')d\rho'}{\rho^2}
=\lim_{\rho \to 0+}\frac{u_2(\rho)}{2}=0 $$
due to the boundary condition $u_2(0)=0$, see Lemma \ref{lem:HinC}, and 
this already implies that $u$ is analytic around $\rho=0$ which shows $u \in C^\infty[0,1)$.
Furthermore, around $\rho=1$, the analytic solution behaves like $1$ and the nonanalytic one like $(1-\rho)^{1-\lambda}$, at least if $\lambda \notin \{1, 0, -1, -2, \dots\}$ which we shall assume for the moment.
Now suppose $u$ is not smooth at $\rho=1$. This implies that 
$u$ can be written as 
$$ u(\rho)=(1-\rho)^{1-\lambda}\sum_{j=0}^\infty a_j (1-\rho)^j+h(\rho) $$
for suitable constants $a_j \in \mathbb{C}$, $j \in \mathbb{N}_0$, and a suitable function $h$ which is analytic around $\rho=1$. Furthermore, $a_0 \not=0$ and the series has a positive convergence radius (see \cite{miller}, \S 6.2).
This shows in particular that $u''(\rho)\simeq (1-\rho)^{-1-\lambda}$ as $\rho \to 1-$.
Note that $u$ has to satisfy the integrability condition
\begin{equation} 
\label{eq:proofL'sigmap4}
\int_{1-\varepsilon}^1 \left |\left (\frac{1}{p}(p^2 u)' \right )'(\rho) \right |^2 d\rho=\int_{1-\varepsilon}^1 |u_2'(\rho)|^2 d\rho \leq \|u_2\|_2^2 < \infty 
\end{equation}
for any $\varepsilon \in (0,1)$ where $p(\rho):=\rho$.
However, we have $(\frac{1}{p}(p^2 u)')'(\rho)\simeq (1-\rho)^{-1-\lambda}$ and this implies
$$ \int_{1-\varepsilon}^1 \left |\left (\frac{1}{p}(p^2 u)' \right )'(\rho) \right |^2 d\rho \simeq \int_{1-\varepsilon}^1 |1-\rho|^{-2-2\Re \lambda} d\rho=\infty $$
since we assume $\Re \lambda \geq -\frac{1}{2}$.
This is a contradiction to \eqref{eq:proofL'sigmap4} and therefore, we must have $u \in C^\infty[0,1]$ provided that $\lambda \notin \{0, 1\}$.
If $\lambda \in \{0,1\}$ and $u$ is not smooth at $\rho=1$, Frobenius' method implies that $u$ is of the form
$$ u(\rho)=\sum_{j=0}^\infty a_j (1-\rho)^j+c(1-\rho)^{1-\lambda}\log(1-\rho)\sum_{j=0}^\infty b_j (1-\rho)^j $$
for suitable constants $a_j, b_j, c \in \mathbb{C}$, $j \in \mathbb{N}_0$, where $a_0 \not=0$, $b_0 \not=0$ and the series have positive convergence radii (cf.~\cite{miller}, \S 6.2).
Since we assume that $u$ is not smooth at $\rho=1$, we must have $c\not= 0$ and this implies $u''(\rho)\simeq (1-\rho)^{-1-\lambda}$ as $\rho \to 1-$ which, as before, contradicts \eqref{eq:proofL'sigmap4}.
Therefore, in any case, we conclude that $u\in C^\infty[0,1]$ which finishes the proof.
\end{proof}

Now we are ready to obtain a sufficiently accurate description of the spectrum of $L$.
To this end, we define the \emph{spectral bound} for the mode stability problem 
Eq.~\eqref{eq:evodedual} as follows.
\begin{definition}
\label{def:sb}
The \emph{spectral bound} $s_0$ is defined as 
$$ s_0:=\sup\{\mathrm{Re}\lambda: \mbox{Eq.~\eqref{eq:evodedual} has a nontrivial solution $v(\cdot,\lambda) \in C^\infty[0,1]$}\}. $$
We say that the fundamental self--similar solution $\psi^T$ is \emph{mode stable} iff $s_0<0$.
\end{definition}
Note that Proposition \ref{prop:modstab} implies that $s_0<\frac{1}{2}$.

\begin{lemma}
 \label{lem:specL2}
 For the spectrum $\sigma(L)$ of $L$ we have
 $$ \sigma(L) \subset \left \{\lambda \in \mathbb{C}: \mathrm{Re}\lambda \leq \max \left \{-\tfrac{1}{2},s_0 \right \} \right \} \cup \{1\} $$
 where $s_0$ is the spectral bound, see Definition \ref{def:sb}.
 Furthermore, $1 \in \sigma_p(L)$ and the associated (geometric) eigenspace is one--dimensional and spanned by 
 $$ \mb{g}(\rho):=\frac{1}{(1+\rho^2)^2}\left ( \begin{array}{c}
2 \rho^3  \\ \rho (3+\rho^2) 
                          \end{array} \right ). $$
\end{lemma}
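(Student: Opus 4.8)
The plan is to read off $\sigma(L)$ from the machinery already in place: the bound $\sigma(L_0)\subset\{\Re\lambda\le-\tfrac12\}$ of Corollary~\ref{cor:specL0}, the compactness reduction of Lemma~\ref{lem:specL}, the translation to the ODE~\eqref{eq:evode} of Proposition~\ref{prop:specL}, the dual-equation dichotomy of Lemma~\ref{lem:dual}, and the reconstruction formulas of Lemma~\ref{lem:actionL}. For the spectral inclusion I would argue by contradiction: suppose $\lambda\in\sigma(L)$ with $\Re\lambda>\max\{-\tfrac12,s_0\}$ and $\lambda\ne1$. Since $\Re\lambda>-\tfrac12$, Corollary~\ref{cor:specL0} gives $\lambda\in\rho(L_0)$, so Lemma~\ref{lem:specL} forces $\lambda\in\sigma_p(L)$; Proposition~\ref{prop:specL} then produces a nontrivial $u(\cdot,\lambda)\in C^\infty[0,1]$ solving Eq.~\eqref{eq:evode}, and since $\lambda\ne1$, Lemma~\ref{lem:dual} yields a nontrivial $v(\cdot,\lambda)\in C^\infty[0,1]$ solving the dual equation~\eqref{eq:evodedual}. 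By the definition of $s_0$ this forces $\Re\lambda\le s_0$, a contradiction. Hence $\sigma(L)$ is contained in the claimed set, and the extra point $1$ is genuinely needed since $s_0<\tfrac12<1$ by Proposition~\ref{prop:modstab}.

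To see $1\in\sigma_p(L)$ I would exhibit the eigenvector by running the derivation of Lemma~\ref{lem:actionL} backwards. Starting from the gauge mode $g(\rho)=\frac{\rho}{1+\rho^2}$, a solution of Eq.~\eqref{eq:evode} with $\lambda=1$, I set $u_2(\rho):=\frac{1}{\rho}\bigl(\rho^2 g(\rho)\bigr)'$ and $u_1(\rho):=\rho^2 u_2(\rho)-\int_0^\rho\rho' u_2(\rho')\,d\rho'$; a short simplification gives exactly $\mb{g}(\rho)=(1+\rho^2)^{-2}\bigl(2\rho^3,\ \rho(3+\rho^2)\bigr)$. Both components are real-analytic on $[0,1]$ and behave like $2\rho^3$ and $3\rho$ near the origin, so $\mb{g}\in\mc{D}(\tilde{L}_0)\subset\mc{D}(L_0)=\mc{D}(L)$, and since $g$ solves Eq.~\eqref{eq:evode} with $\lambda=1$ a direct computation --- precisely the algebra in the proof of Lemma~\ref{lem:actionL} read in reverse --- shows $(1-L)\mb{g}=0$.

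For the one-dimensionality, let $\mb{u}\in\mc{D}(L)$ with $(1-L)\mb{u}=0$. By Lemma~\ref{lem:actionL} and the regularity argument of Proposition~\ref{prop:specL}, the function $u(\rho):=\rho^{-2}\int_0^\rho\rho' u_2(\rho')\,d\rho'$ lies in $C^\infty[0,1]$ and solves Eq.~\eqref{eq:evode} with $\lambda=1$; Proposition~\ref{prop:modstab}, which applies since $1\ge\tfrac12$, then forces $u(\rho)=c\,\frac{\rho}{1+\rho^2}$ for some $c\in\mathbb{C}$. Since $\rho^2 u(\rho)=\int_0^\rho\rho' u_2(\rho')\,d\rho'$ recovers $u_2$ by differentiation and then $u_1=\rho^2 u_2-\int_0^\rho\rho' u_2$, the assignment $\mb{u}\mapsto u$ is injective; hence the eigenspace is at most one-dimensional, and with the previous paragraph it is exactly the line spanned by $\mb{g}$.

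I do not expect a genuine obstacle, since every analytic difficulty has already been handled in the preceding lemmas. The points requiring care are checking that the reconstructed $\mb{g}$ actually satisfies the boundary conditions $u_1(0)=u_1'(0)=u_1''(0)=0$ and $u_2(0)=0$ defining $\mc{D}(\tilde{L}_0)$, and the injectivity of the correspondence between eigenfunctions of $L$ and smooth solutions of Eq.~\eqref{eq:evode} --- the latter, together with the mode-stability input of Proposition~\ref{prop:modstab}, being exactly what collapses the a priori possibly two-dimensional geometric eigenspace at $\lambda=1$ to a single line.
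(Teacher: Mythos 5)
Your proposal is correct and follows essentially the same route as the paper's proof: the spectral inclusion via Corollary \ref{cor:specL0}, Lemma \ref{lem:specL}, Proposition \ref{prop:specL} and Lemma \ref{lem:dual}, the direct verification that $\mb{g}\in\mc{D}(\tilde{L}_0)$ is an eigenfunction, and the reduction of any $\lambda=1$ eigenfunction to a smooth solution of Eq.~\eqref{eq:evode} via Lemma \ref{lem:actionL}. The only (harmless) deviation is that you invoke Proposition \ref{prop:modstab} to conclude that smooth solutions of Eq.~\eqref{eq:evode} at $\lambda=1$ are multiples of the gauge mode, whereas the paper argues this point directly by Frobenius' method (uniqueness, up to scalars, of the solution regular at $\rho=0$); both are valid.
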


\begin{proof}
 Set $M:=\{\lambda \in \mathbb{C}: \mathrm{Re}\lambda \leq \max \{-\frac{1}{2},s_0 \} \} \cup \{1\}$ and let $\lambda \in \sigma(L)$.
 If $\Re \lambda \leq -\frac{1}{2}$ then $\Re \lambda \leq \max\{-\frac{1}{2},s_0\}$ and we have $\lambda \in M$.
 If $\Re \lambda > -\frac{1}{2}$ then Corollary \ref{cor:specL0} implies that $\lambda \notin \sigma(L_0)$. Thus, according to Lemma \ref{lem:specL}, $\lambda \in \sigma_p(L)$.
 Consequently, Proposition \ref{prop:specL} shows that there exists a nontrivial $u(\cdot,\lambda) \in C^\infty[0,1]$ that solves Eq.~\eqref{eq:evode}.
 If $\lambda=1$ then trivially $\lambda \in M$.
 Thus, we assume $\lambda \not= 1$.
 Then Lemma \ref{lem:dual} implies that Eq.~\eqref{eq:evodedual} has a nontrivial smooth solution and by definition of $s_0$ we conclude that $s_0 \geq \Re \lambda > -\frac{1}{2}$.
This shows that $\Re\lambda \leq s_0=\max\{-\frac{1}{2},s_0\}$ and therefore, $\lambda \in M$.

We have $\mb{g} \in \mc{D}(\tilde{L}_0)$ which implies $L \mb{g}=(\tilde{L}_0+L')\mb{g}$ and by straightforward differentiation one obtains $(1-\tilde{L}_0-L')\mb{g}=0$. This shows that $1 \in \sigma_p(L)$ and $\mb{g}$ is an associated eigenfunction.
Now suppose $\tilde{\mb{g}}$ is another eigenfunction of $L$ with eigenvalue $1$.
Invoking Lemma \ref{lem:actionL}, we see that
\begin{equation}
\label{eq:prooflemspecL22}
\tilde{g}_1(\rho)=\rho^2 \tilde{g}_2(\rho)-\int_0^\rho \rho' \tilde{g}_2(\rho')d\rho' 
\end{equation}
and 
\begin{equation}
 \label{eq:prooflemspecL2}
-(1-\rho^2)\tilde{g}''(\rho)-2\frac{1-\rho^2}{\rho}\tilde{g}'(\rho)+2 \lambda \rho \tilde{g}'(\rho)+\lambda (\lambda+1)\tilde{g}(\rho)+\frac{V(\rho)}{\rho^2}\tilde{g}(\rho)=0 
\end{equation}
where $\tilde{g}(\rho):=\frac{1}{\rho^2}\int_0^\rho \rho' \tilde{g}_2(\rho')d\rho'$.
Furthermore, $\tilde{g} \in C^\infty[0,1]$.
However, by Frobenius' method we know that the only nontrivial solution to Eq.~\eqref{eq:prooflemspecL2} in $C^\infty[0,1]$ is a multiple of
$$ g(\rho):=\frac{1}{\rho^2}\int_0^\rho \rho' g_2(\rho')d\rho'=\frac{\rho}{1+\rho^2} $$ and consequently, there exists a $c \in \mathbb{C}$ such that
$$ \frac{1}{\rho^2}\int_0^\rho \rho' \tilde{g}_2(\rho')=\tilde{g}(\rho)=c g(\rho)=\frac{c}{\rho^2}\int_0^\rho \rho' g_2(\rho') $$
which implies $\tilde{g}_2(\rho)=c g_2(\rho)$.
Inserting this in Eq.~\eqref{eq:prooflemspecL22} we obtain $\tilde{g}_1(\rho)=c g_1(\rho)$ and therefore, $\tilde{\mb{g}}=c\mb{g}$. This proves that the geometric multiplicity of the eigenvalue $1 \in \sigma_p(L)$ is $1$.
\end{proof}

\subsection{Projection on the stable subspace}
\label{sec:gauge}
As already discussed in Sec.~\ref{sec:mode}, the unstable eigenvalue $1 \in \sigma_p(L)$ is a direct consequence of the time translation symmetry of the original problem. 
In this sense, the instability is artificial and we are actually interested in the time evolution with this gauge instability removed.
By Lemma \ref{lem:specL2}, the eigenspace associated to the eigenvalue $1 \in \sigma_p(L)$ is spanned by a single vector (the \emph{gauge mode}) which henceforth will be denoted by $\mb{g} \in \mc{D}(L)$.
We intend to construct a projection that removes the gauge instability.
However, we cannot simply use an orthogonal projection because this is not compatible with the time evolution (the operator $L$ is not self--adjoint).
Instead, we need to construct a projection that commutes with the semigroup $S(\tau)$.
This can be done in a completely abstract way.
To this end, let us recall some well known facts from spectral theory, see e.g., \cite{kato}.
At this point we have to warn the reader that our definition of the resolvent $R_L(\lambda)$ differs from \cite{kato} by a sign, i.e., we have $R_L(\lambda)=(\lambda-L)^{-1}$ whereas in \cite{kato} the 
convention $R_L(\lambda)=(L-\lambda)^{-1}$ is used.
This has to be taken into account in the following.

First of all, the resolvent $R_L$ is a holomorphic function \footnote{Of course, this function has values in the Banach space $\mc{B}(\mc{H})$ of bounded linear operators on $\mc{H}$. However, it turns out that this is no obstacle to developing complex analysis just along the same lines as for functions taking values in $\mathbb{C}$, see \cite{kato}.} on $\rho(L)$.
This follows from the expansion of $R_L(\lambda)$ in a Neumann series, see \cite{kato}, p.~174, Theorem 6.7.
According to Lemma \ref{lem:specL2}, the gauge eigenvalue $1$ is an isolated point in the spectrum of $L$ which implies that $R_L$ has an isolated singularity at $1$.
Therefore, we can find a curve $\Gamma$ which lies completely in $\rho(L)$ and encloses the point $1$. For instance, we may simply take a circle with radius $\frac{1}{3}$ and center $1$ in the complex plane, see Fig.~\ref{fig:spec}.
\begin{figure}[h]
 \includegraphics[totalheight=5cm]{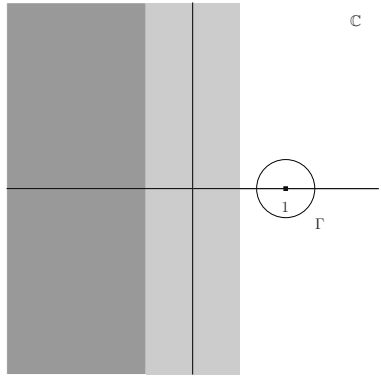}
 \caption{The structure of the spectrum $\sigma(L)$. According to numerics, the lighter shaded region belongs to the resolvent set $\rho(L)$. However, this has not been proved rigorously so far.
On the contrary, the darker shaded region contains the spectrum of the free generator $L_0$.}
  \label{fig:spec}
\end{figure}
Consequently, by a well--known result from complex analysis, $R_L$ can be expanded in a Laurent series
$$ R_L(\lambda)=\sum_{k=-\infty}^\infty (\lambda-1)^k A_k $$
around $1$ where the ``coefficients'' $A_k$ (which are actually operators here) are given by the formula
$$ A_k=\frac{1}{2\pi i}\int_\Gamma (\lambda-1)^{-k-1}R_L(\lambda)d\lambda. $$
This expression makes perfect sense since the integral is taken over a continuous function (with values in a Banach space, though) and
it may be defined simply via Riemann sums, just like in the scalar case.
Consequently, each $A_k$ is a bounded linear operator on $\mc{H}$. 
From this formula and the resolvent equation 
$R_L(\lambda)-R_L(\mu)=(\mu-\lambda)R_L(\lambda)R_L(\mu)$, valid for all $\lambda, \mu \in \rho(L)$, it follows that $A_{-1}^2=A_{-1}$, see \cite{kato}, p.~38--39.
In other words, $P:=A_{-1}$ is a projection. 
Explicitly, we have
$$ P=\frac{1}{2\pi i}\int_\Gamma R_L(\lambda) d\lambda. $$
Furthermore, from the resolvent equation above and this formula, it follows immediately that $P$ commutes with $R_L(\lambda)$ for any $\lambda \in \rho(L)$.
This implies $PL \subset LP$ via \cite{kato}, p.~173, Theorem 6.5.
We define the subspaces $\mc{M}:=P\mc{H}$ and $\mc{N}:=(1-P)\mc{H}$.
Note that both $\mc{M}$ and $\mc{N}$ are closed since $\mc{M}=\ker(1-P)$ and $\mc{N}=\ker P$ and $P$ is bounded.
This shows that $\mc{M}$ and $\mc{N}$ equipped with $(\cdot|\cdot)$ are Hilbert spaces.
However, the most important feature of the projection $P$ is the fact that it decomposes the operator $L$ in the following way.
Let $L_\mc{N}$ be defined by $\mc{D}(L_\mc{N}):=\mc{D}(L)\cap \mc{N}$ and $L_\mc{N}\mb{u}:=L \mb{u}$.
Since $P$ is idempotent, we have for $\mb{u} \in \mc{D}(L_\mc{N})$ that $\mb{u}=(1-P)\mb{u}$ and thus, $L\mb{u}=L(1-P)\mb{u}=(1-P)L\mb{u} \in \mc{N}$.
Hence, $L_\mc{N}$ may be regarded as a linear operator on the Hilbert space $\mc{N}$.
Of course, an analogous statement is true for $L_\mc{M}$ on $\mc{M}$ and from \cite{kato}, p.~178, Theorem 6.17, we have $\sigma(L_\mc{M})=\{1\}$ and $\sigma(L_\mc{N})=\sigma(L)\backslash \{1\}$.
Loosely speaking, $L_\mc{N}$ is ``the operator $L$ with the gauge instability removed''. Consequently, we call $\mc{N}$ the \emph{stable subspace}.

\begin{lemma}
\label{lem:genSN}
 The operator $L_\mc{N}: \mc{D}(L_\mc{N})\subset \mc{N} \to \mc{N}$ generates a strongly continuous one--parameter semigroup $S_\mc{N}: [0, \infty) \to \mc{B}(\mc{N})$ satisfying
 $$ \|S_\mc{N}(\tau)\|\leq e^{(-\frac{1}{2}+\|L'\|)\tau} $$
 for all $\tau \geq 0$.
\end{lemma}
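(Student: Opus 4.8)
The plan is to deduce the statement entirely from what has already been established about the full semigroup $S(\tau)$ generated by $L=L_0+L'$ (Corollary \ref{cor:gen}) together with the properties of the spectral projection $P$ constructed above. The crucial observation is that $P$ commutes with $S(\tau)$ for every $\tau\geq 0$. Indeed, $P$ is a contour integral of the resolvent $R_L(\lambda)$ over the curve $\Gamma\subset\rho(L)$, and the resolvent of the generator of a $C_0$-semigroup commutes with that semigroup — a standard fact from semigroup theory (one way to see it: $R_L(\lambda)=\int_0^\infty e^{-\lambda\tau}S(\tau)\,d\tau$ for $\Re\lambda$ large, and $S(t)$ commutes with every $S(\tau)$, hence with the integral, and one then extends to all of $\rho(L)$ by analyticity). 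Once $S(\tau)P=PS(\tau)$ is in hand, $S(\tau)$ leaves the closed subspace $\mc{N}=\ker P$ invariant, and the whole proof reduces to restriction.

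First I would record that $S(\tau)$ maps $\mc{N}$ into $\mc{N}$ and set $S_\mc{N}(\tau):=S(\tau)|_\mc{N}$. Since $\mc{N}$ is a closed $S(\tau)$-invariant subspace of $\mc{H}$, the family $S_\mc{N}(\tau)$ is automatically a strongly continuous one-parameter semigroup on the Hilbert space $\mc{N}$: the semigroup law and the strong continuity are inherited verbatim from $S$. Next I would identify its generator. By the general theory of restrictions of $C_0$-semigroups to invariant subspaces, the generator of $S_\mc{N}$ is the part of $L$ in $\mc{N}$, i.e.\ the operator with domain $\{\mb{u}\in\mc{D}(L)\cap\mc{N}:L\mb{u}\in\mc{N}\}$ acting as $L$. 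But the discussion preceding the lemma already shows that for $\mb{u}\in\mc{D}(L)\cap\mc{N}$ one automatically has $L\mb{u}=L(1-P)\mb{u}=(1-P)L\mb{u}\in\mc{N}$, since $P$ commutes with $L$ on its domain; hence the domain above is exactly $\mc{D}(L_\mc{N})=\mc{D}(L)\cap\mc{N}$ and the generator is precisely $L_\mc{N}$.

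Finally, the growth bound requires no new computation: for $\mb{u}\in\mc{N}$ the norm of $\mb{u}$ in $\mc{N}$ coincides with $\|\mb{u}\|$, so $\|S_\mc{N}(\tau)\mb{u}\|=\|S(\tau)\mb{u}\|\leq e^{(-\frac{1}{2}+\|L'\|)\tau}\|\mb{u}\|$ by Corollary \ref{cor:gen}, which yields $\|S_\mc{N}(\tau)\|\leq e^{(-\frac{1}{2}+\|L'\|)\tau}$ for all $\tau\geq 0$. I do not anticipate a genuine obstacle here; the only points that warrant a careful citation rather than a one-line dismissal are the commutation $S(\tau)P=PS(\tau)$ and the precise assertion that the generator of the restricted semigroup is the part of $L$ in $\mc{N}$ — both are standard (the relevant statements are in \cite{engel}), and everything substantive has already been done in constructing $P$ and analysing $\sigma(L)$.
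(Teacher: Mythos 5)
Your proposal is correct, but it follows a genuinely different route from the paper. The paper does not restrict the semigroup $S(\tau)$; instead it verifies the Lumer--Phillips hypotheses for $L_\mc{N}$ directly on the Hilbert space $\mc{N}$: density of $\mc{D}(L_\mc{N})$ in $\mc{N}$ (using $P\mc{D}(L)\subset\mc{D}(L)$ and boundedness of $P$), closedness of $L_\mc{N}$, the dissipativity-type estimate $\Re(L_\mc{N}\mb{u}|\mb{u})\leq\left(-\tfrac{1}{2}+\|L'\|\right)\|\mb{u}\|^2$ (obtained by extending the estimate from $\mc{D}(\tilde{L}_0)$ to $\mc{D}(L)$ by approximation), and surjectivity of $\lambda-L_\mc{N}$ for $\lambda>1$ via the invariance of $\mc{N}$ under $R_L(\lambda)$. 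Only afterwards, in Corollary \ref{cor:SNS}, does the paper identify $S_\mc{N}(\tau)$ with $S(\tau)|_\mc{N}$, and it does so by the Post--Widder inversion formula. You go the other way: you first establish $S(\tau)P=PS(\tau)$ (which is indeed standard, since $R_L(\lambda)$ commutes with $S(\tau)$ for every $\lambda\in\rho(L)$ --- your Laplace-transform-plus-continuation argument also works here because the relevant part of $\rho(L)$ is connected, though the direct resolvent argument is cleaner), conclude that the closed subspace $\mc{N}=\ker P$ is $S(\tau)$-invariant, define $S_\mc{N}:=S|_\mc{N}$, invoke the standard subspace-semigroup result that the generator of the restriction is the part of $L$ in $\mc{N}$ (which equals $L_\mc{N}$ because $PL\subset LP$ gives $L\mb{u}=(1-P)L\mb{u}\in\mc{N}$ for $\mb{u}\in\mc{D}(L)\cap\mc{N}$, exactly as noted in Sec.~\ref{sec:gauge}), and inherit the growth bound from Corollary \ref{cor:gen}. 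Your approach is more economical: it makes Corollary \ref{cor:SNS} immediate and avoids redoing any dissipativity or range computations, at the price of citing two abstract facts (commutation of $P$ with $S(\tau)$ and the identification of the generator of a restricted semigroup). The paper's route is more self-contained at the operator level and incidentally records the dissipativity estimate for $L$ on all of $\mc{D}(L)$, but it then needs the separate Post--Widder argument to relate $S_\mc{N}$ to $S$.
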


\begin{proof}
Note first that $L_\mc{N}$ is densely defined.
To see this, let $\mb{f} \in \mc{N} \subset \mc{H}$ be arbitrary.
Since $\mc{D}(L)$ is dense in $\mc{H}$, 
it follows that there exists a sequence $(\mb{u}_j)\subset \mc{D}(L)$ such that $\mb{u}_j \to \mb{f}$ in $\mc{H}$ as $j \to \infty$.
Furthermore, we have $PL \subset LP$ which in particular implies that $P\mc{D}(L) \subset \mc{D}(L)$ and thus, $(P\mb{u}_j) \subset \mc{N} \cap \mc{D}(L)=\mc{D}(L_\mc{N})$.
By the boundedness of $P$, we obtain $\|P\mb{u}_j-P\mb{f}\| \lesssim \|\mb{u}_j-\mb{f}\| \to 0$, i.e., $P\mb{u}_j \to P\mb{f}=\mb{f}$ in $\mc{H}$ and hence, $P\mb{u}_j \to \mb{f}$ in $\mc{N}$
as $j \to \infty$.
Thus, $\mc{D}(L_\mc{N})$ is dense in $\mc{N}$ as claimed.

Moreover, let $(\mb{u}_j) \subset \mc{D}(L_\mc{N})$ be such that $\mb{u}_j \to \mb{u}$ in $\mc{N}$ and $L_\mc{N}\mb{u}_j \to \mb{f}$ as $j \to \infty$ for some $\mb{f} \in \mc{N}$.
This implies $\mb{u}_j \to \mb{u}$, $L\mb{u}_j \to \mb{f}$ as $j \to \infty$ in $\mc{H}$ and by the closedness of $L$, we obtain $\mb{u} \in \mc{D}(L)$ and $L\mb{u}=\mb{f}$.
Consequently, we have $\mb{u} \in \mc{D}(L)\cap \mc{N}=\mc{D}(L_\mc{N})$ and $L_\mc{N}\mb{u}=L\mb{u}=\mb{f}$ which shows that $L_\mc{N}$ is closed.

Now recall the estimate from the proof of Proposition \ref{prop:gen0} which readily implies that
\begin{align} 
\label{eq:proofgenSN}
\Re (L\mb{u}|\mb{u})&=\Re (L_0\mb{u}|\mb{u})+\Re (L'\mb{u}|\mb{u})=\Re(\tilde{L}_0 \mb{u}|\mb{u})+\Re (L' \mb{u}|\mb{u})\leq -\frac{1}{2}\|\mb{u}\|^2+\|L'\mb{u}\|\|\mb{u}\| \\
&\leq \left (-\frac{1}{2}+\|L'\| \right )\|\mb{u}\|^2 \nonumber
 \end{align}
for all $\mb{u} \in \mc{D}(\tilde{L}_0)$.
Let $\mb{u} \in \mc{D}(L)$. By the definition of the closure, there exists a sequence $(\mb{u}_j) \subset \mc{D}(\tilde{L}_0)$ such that $\mb{u}_j \to \mb{u}$ and $L\mb{u}_j \to L\mb{u}$ as $j \to \infty$ in $\mc{H}$. 
Consequently, 
\begin{align*} |(L\mb{u}_j|\mb{u}_j)-(L\mb{u}|\mb{u})|&=|(L\mb{u}_j|\mb{u}_j-\mb{u})+(L\mb{u}_j-L\mb{u}|\mb{u})| \\
&\leq \|L\mb{u}_j\|\|\mb{u}_j-\mb{u}\|+\|L\mb{u}_j-L\mb{u}\|\|\mb{u}\| \to 0
\end{align*}
as $j \to \infty$ which shows that 
$$(L\mb{u}|\mb{u})=\lim_{j \to \infty}(L\mb{u}_j|\mb{u}_j)\leq \lim_{j \to \infty}\left (-\frac{1}{2}+\|L'\| \right )\|\mb{u}_j\|^2=\left (-\frac{1}{2}+\|L'\| \right)\|\mb{u}\|^2 $$ and thus, the estimate \eqref{eq:proofgenSN} holds in fact for all $\mb{u} \in \mc{D}(L)$.
In particular, we have 
$$ \Re (L_\mc{N} \mb{u}|\mb{u})=\Re(L\mb{u}|\mb{u})\leq \left (-\frac{1}{2}+\|L'\| \right )\|\mb{u}\|^2 $$
for all $\mb{u} \in \mc{D}(L_\mc{N})$.
Finally, for any fixed $\lambda>1$ there exists the resolvent $R_L(\lambda)$ (Lemma \ref{lem:specL2}) and thus, if $\mb{f} \in \mc{N}$, we can define $\mb{u}:=R_L(\lambda)\mb{f}$. 
By definition of the resolvent, $\mb{u} \in \mc{D}(L)$ and, since $\mc{N}$ is invariant under $R_L(\lambda)$ ($P$ commutes with $R_L(\lambda)$, see above), we have in fact $\mb{u}=\mc{D}(L)\cap \mc{N}=\mc{D}(L_\mc{N})$ and 
$$ (\lambda-L_\mc{N})\mb{u}=(\lambda-L)R_L(\lambda)\mb{f}=\mb{f}. $$
Consequently, the equation $(\lambda-L_\mc{N})\mb{u}=\mb{f}$ has a solution for any $\mb{f} \in \mc{N}$ (provided that $\lambda>1$) and thus, $\lambda-L_\mc{N}$ is surjective.
Invoking the Lumer--Philips Theorem (\cite{engel}, p.~83, Theorem 3.15) yields the existence of the semigroup $S_\mc{N}$ with the stated properties.
\end{proof}

Next, we relate the semigroup $S_\mc{N}$ to $S$.

\begin{corollary}
\label{cor:SNS}
The stable subspace $\mc{N}$ is invariant under $S(\tau)$ and $S_\mc{N}(\tau)$ is the restriction of $S(\tau)$ to $\mc{N}$, i.e., $S_\mc{N}(\tau)=S(\tau)|_\mc{N}$ for all $\tau \geq 0$.
\end{corollary}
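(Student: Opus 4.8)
The plan is to proceed in three steps: (i) show the spectral projection $P$ commutes with the semigroup $S(\tau)$; (ii) deduce that $\mc{N}=\ker P$ is invariant under $S(\tau)$, so that the restriction $\tilde S(\tau):=S(\tau)|_\mc{N}$ is a strongly continuous semigroup on $\mc{N}$; and (iii) identify the generator of $\tilde S$ with $L_\mc{N}$ and invoke uniqueness of the semigroup generated by a given operator to conclude $\tilde S=S_\mc{N}$.

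For (i) I would use what was already recorded in Sec.~\ref{sec:gauge}: $P$ commutes with $R_L(\lambda)$ for every $\lambda\in\rho(L)$, and $PL\subset LP$; in particular $P\mc{D}(L)\subset\mc{D}(L)$. Fix $\mb{u}\in\mc{D}(L)$ and consider $v(\tau):=PS(\tau)\mb{u}$ and $w(\tau):=S(\tau)P\mb{u}$. Both curves lie in $\mc{D}(L)$ (for $v$ because $S(\tau)$ leaves $\mc{D}(L)$ invariant and $P\mc{D}(L)\subset\mc{D}(L)$; for $w$ because $P\mb{u}\in\mc{D}(L)$), are differentiable, and satisfy $v'(\tau)=PLS(\tau)\mb{u}=LPS(\tau)\mb{u}=Lv(\tau)$ and $w'(\tau)=LS(\tau)P\mb{u}=Lw(\tau)$, with the common initial value $v(0)=P\mb{u}=w(0)$. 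By the uniqueness statement in Corollary~\ref{cor:gen}, $v\equiv w$; since $\mc{D}(L)$ is dense and $P$, $S(\tau)$ are bounded, this extends to $PS(\tau)=S(\tau)P$ on all of $\mc{H}$. (Equivalently, one could start from the Laplace representation $R_L(\lambda)=\int_0^\infty e^{-\lambda\tau}S(\tau)\,d\tau$ for $\Re\lambda$ large and use injectivity of the Laplace transform together with strong continuity.)

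Step (ii) is then immediate: if $\mb{u}\in\mc{N}$ then $P\mb{u}=0$, hence $PS(\tau)\mb{u}=S(\tau)P\mb{u}=0$, so $S(\tau)\mb{u}\in\ker P=\mc{N}$. Thus $\tilde S(\tau):=S(\tau)|_\mc{N}$ is a well-defined strongly continuous one-parameter semigroup on the Hilbert space $\mc{N}$; let $B$ denote its generator. For $\mb{u}\in\mc{D}(L_\mc{N})=\mc{D}(L)\cap\mc{N}$, the difference quotient $\tau^{-1}(S(\tau)\mb{u}-\mb{u})$ converges in $\mc{H}$ (hence in $\mc{N}$) to $L\mb{u}=L_\mc{N}\mb{u}$, so $\mb{u}\in\mc{D}(B)$ and $B\mb{u}=L_\mc{N}\mb{u}$; that is, $L_\mc{N}\subset B$. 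But $L_\mc{N}$ is itself the generator of a $C_0$-semigroup by Lemma~\ref{lem:genSN}, and a generator admits no proper generator extension: choosing $\lambda$ with $\Re\lambda$ large enough that $\lambda\in\rho(B)\cap\rho(L_\mc{N})$, the map $\lambda-L_\mc{N}$ is already a bijection of $\mc{D}(L_\mc{N})$ onto $\mc{N}$ and coincides there with the injective map $\lambda-B$, forcing $\mc{D}(B)=\mc{D}(L_\mc{N})$ and hence $B=L_\mc{N}$. Since a strongly continuous semigroup is uniquely determined by its generator, $\tilde S(\tau)=S_\mc{N}(\tau)$ for all $\tau\ge 0$, which is the assertion.

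I expect the commutation in step (i) to be the only point requiring genuine care — specifically the domain bookkeeping $PL\subset LP$ and $P\mc{D}(L)\subset\mc{D}(L)$ — after which everything reduces to uniqueness facts already established in Corollary~\ref{cor:gen} and Lemma~\ref{lem:genSN}.
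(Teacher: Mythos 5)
Your argument is correct, but it follows a genuinely different route from the paper. You first establish the commutation $PS(\tau)=S(\tau)P$ (via uniqueness of classical solutions of the abstract Cauchy problem from Corollary~\ref{cor:gen}, using $PL\subset LP$ and $P\mc{D}(L)\subset\mc{D}(L)$ for the domain bookkeeping), deduce invariance of $\mc{N}=\ker P$, and then identify the generator $B$ of the restricted semigroup $S(\tau)|_{\mc{N}}$ with $L_\mc{N}$ by showing $L_\mc{N}\subset B$ and using that a generator admits no proper generator extension, concluding with uniqueness of the semigroup generated by a given operator. The paper argues more directly at the level of resolvents: since $\mc{N}$ is invariant under $R_L(\lambda)$, one has $R_{L_\mc{N}}(\tfrac{n}{\tau})=R_L(\tfrac{n}{\tau})|_{\mc{N}}$ for $\tfrac{n}{\tau}>1$, and the Post--Widder inversion formula applied to both $S_\mc{N}$ and $S$ then yields $S_\mc{N}(\tau)\mb{f}=S(\tau)\mb{f}$ for $\mb{f}\in\mc{N}$, with the invariance of $\mc{N}$ under $S(\tau)$ falling out as a byproduct rather than being proved first. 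Your approach is longer but buys the explicit semigroup-level commutation $PS(\tau)=S(\tau)P$ as an intermediate result (a fact the paper uses later in Theorem~\ref{thm:linear} while attributing it to Sec.~\ref{sec:gauge}, where only commutation with the resolvent is spelled out); the paper's approach is shorter, avoiding both the generator identification and the appeal to uniqueness for the Cauchy problem, at the price of invoking the Post--Widder formula.
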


\begin{proof}
 Fix $\tau_0>0$ and let $\mb{f} \in \mc{N}$ be arbitrary.
 According to Lemma \ref{lem:specL2}, the resolvent $R_L(\frac{n}{\tau})$ exists for all $\tau \in [0,\tau_0]$ and all $n \in \mathbb{N}$ provided that $n>\tau_0$ because in this case we have $\frac{n}{\tau}>\frac{\tau_0}{\tau}\geq 1$.
 Note further that the unique solution $\mb{u} \in \mc{D}(L_\mc{N})$ of 
$(\frac{n}{\tau}-L_\mc{N})\mb{u}=(\frac{n}{\tau}-L)\mb{u}=\mb{f}$ is given by 
$R_L(\frac{n}{\tau})\mb{f}$ (recall that $\mc{N}$ is invariant under $R_L(\frac{n}{\tau})$) and thus, $R_{L_\mc{N}}(\frac{n}{\tau})=R_L(\frac{n}{\tau})|_\mc{N}$, the restriction of the full resolvent to the subspace $\mc{N}$.
Consequently, the Post--Widder inversion formula for the Laplace transform (see \cite{engel}, p.~223, Corollary 5.5) yields
$$ S_\mc{N}(\tau)\mb{f}=\lim_{n \to \infty}\left [\frac{n}{\tau}R_{L_\mc{N}}\left (\frac{n}{\tau}\right ) \right ]^n \mb{f}=
\lim_{n \to \infty}\left [\frac{n}{\tau}R_L\left (\frac{n}{\tau}\right ) \right ]^n \mb{f}=S(\tau)\mb{f} $$
for all $\tau \in [0,\tau_0]$.
Since $\tau_0>0$ and $\mb{f} \in \mc{N}$ were arbitrary, this implies the claim.
\end{proof}

\subsection{Projection on the unstable subspace}
Heuristically speaking, the growth behavior of the semigroup $S_\mc{N}$ on the stable subspace should be dictated by the spectral bound $s_0$.
Before turning to this, however, we have to deal with the following subtlety: we do not yet know how many dimensions the \emph{unstable subspace} $\mc{M}$ has.
Ideally, we would like to have $\mc{M}=\langle \mb{g} \rangle$, i.e., the unstable subspace should be spanned by the gauge mode.
However, this does not follow from the discussion above.
In fact, since the operator $L$ is not self--adjoint, this is not true in general.
A priori, it may well be the case that $\dim \mc{M}>1$ or even $\dim \mc{M}=\infty$. 
The number $\dim \mc{M}$ is called the \emph{algebraic multiplicity} of the eigenvalue $1 \in \sigma_p(L)$ (as opposed to the \emph{geometric multiplicity}, the dimension of $\ker(1-L)$, which is $1$ by Lemma \ref{lem:specL2}).
First, we rule out the pathological case $\dim \mc{M}=\infty$.

\begin{lemma}
\label{lem:dimMfinite}
 The algebraic multiplicity of the gauge eigenvalue $1 \in \sigma_p(L)$ is finite.
\end{lemma}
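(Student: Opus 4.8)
The plan is to use the fact, already established, that $1$ is an \emph{isolated} point of $\sigma(L)$ (Lemma~\ref{lem:specL2}) which lies outside $\sigma(L_0)$ (Corollary~\ref{cor:specL0}, since $\Re 1 = 1 > -\tfrac12$), together with the compactness of $L'$ (Lemma~\ref{lem:L'compact}), in order to show that the Riesz projection $P$ has finite rank. Since the algebraic multiplicity of the gauge eigenvalue is by definition $\dim\mc{M}=\dim P\mc{H}$, this gives the claim.

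First I would recall the factorization
$$ \lambda-L=\bigl(1-L'R_{L_0}(\lambda)\bigr)(\lambda-L_0), $$
valid for all $\lambda\in\rho(L_0)$, which was already exploited in the proof of Lemma~\ref{lem:specL}. Because a punctured neighborhood of $1$ lies in $\rho(L)\cap\rho(L_0)$, on that punctured neighborhood $1-L'R_{L_0}(\lambda)$ is boundedly invertible and $R_L(\lambda)=R_{L_0}(\lambda)\bigl(1-L'R_{L_0}(\lambda)\bigr)^{-1}$. Next, the map $\lambda\mapsto L'R_{L_0}(\lambda)$ is holomorphic on a full neighborhood $U$ of $1$ (since $R_{L_0}$ is holomorphic on $\rho(L_0)\supset U$) and takes values in the compact operators, the compact operators being a closed ideal in $\mc{B}(\mc{H})$. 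By the analytic Fredholm theorem for meromorphic families of the form ``identity plus compact'' (cf.~the standard Riesz--Schauder/analytic Fredholm theory, see e.g.~\cite{kato}), the operator function $\bigl(1-L'R_{L_0}(\lambda)\bigr)^{-1}$ extends meromorphically to all of $U$, and at its only possible singularity $\lambda=1$ it has a pole of some finite order $m$ whose principal--part coefficients $B_{-1},\dots,B_{-m}$ are operators of finite rank.

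Then $P$ can simply be read off from the Laurent expansion. Writing $R_{L_0}(\lambda)=\sum_{j\geq 0}(\lambda-1)^j C_j$ (a convergent power series, as $1\in\rho(L_0)$) and $\bigl(1-L'R_{L_0}(\lambda)\bigr)^{-1}=\sum_{k\geq -m}(\lambda-1)^k B_k$, the product $R_L(\lambda)=\sum_k(\lambda-1)^k A_k$ has
$$ A_{-1}=\sum_{j=0}^{m-1} C_j\,B_{-1-j}, $$
while on the other hand $A_{-1}=\frac{1}{2\pi i}\int_\Gamma R_L(\lambda)\,d\lambda=P$. Each summand $C_j B_{-1-j}$ is the composition of a bounded operator with a finite--rank operator, hence finite--rank, and the sum is finite; therefore $P$ has finite rank, i.e.~$\dim\mc{M}<\infty$. (That $P\neq 0$, so that $1$ is genuinely a pole, follows as well from $\mb{g}\in\ker(1-L)\subset\mc{M}$, but is not needed for the assertion.)

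The only genuinely delicate ingredient is the appeal to the analytic Fredholm theorem in the sharp form that yields a \emph{finite--order} pole with \emph{finite--rank} principal part; everything else is bookkeeping with Laurent series. An alternative, essentially equivalent, route is to note that the essential spectrum is stable under relatively compact perturbations, so $\sigma_{\mathrm{ess}}(L)=\sigma_{\mathrm{ess}}(L_0)\subset\{\lambda:\Re\lambda\leq-\tfrac12\}$; then $1$, lying in $\sigma(L)$ but not in $\sigma_{\mathrm{ess}}(L)$, automatically belongs to the discrete spectrum and hence is an isolated eigenvalue of finite algebraic multiplicity.
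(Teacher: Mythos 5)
Your proposal is correct, but your main argument is not the one the paper uses; interestingly, the route you dismiss in your last sentence as an ``alternative'' is exactly the paper's proof. The paper argues by contradiction in two lines: if $\dim\mc{M}=\infty$, then by \cite{kato} (p.~239, Theorem 5.28) the point $1$ would lie in the essential spectrum $\sigma_e(L)$; since $L=L_0+L'$ with $L'$ compact, $\sigma_e(L)=\sigma_e(L_0)$ (\cite{kato}, p.~244, Theorem 5.35), and $1\notin\sigma(L_0)\supset\sigma_e(L_0)$ by Corollary \ref{cor:specL0} --- a contradiction. Your primary argument instead expands $R_L(\lambda)=R_{L_0}(\lambda)\bigl(1-L'R_{L_0}(\lambda)\bigr)^{-1}$ in a Laurent series at the isolated point $1$ and identifies $P=A_{-1}$ as a finite sum of compositions of bounded with finite--rank operators; the bookkeeping (in particular $A_{-1}=\sum_{j=0}^{m-1}C_jB_{-1-j}$, and the bounded invertibility of $1-L'R_{L_0}(\lambda)$ on a punctured neighborhood via the factorization $\lambda-L=(1-L'R_{L_0}(\lambda))(\lambda-L_0)$) is all correct. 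The trade--off: the paper's argument is essentially immediate given the two cited theorems of Kato, while yours is longer but yields more structure --- a pole of finite order with finite--rank principal part, i.e.~that $1$ is a discrete eigenvalue --- at the price of invoking the analytic Fredholm theorem in its sharp form. You correctly flag this as the delicate point; do note that the most commonly quoted version (e.g.~Reed--Simon) asserts finite rank only for the \emph{residue}, whereas your computation of $A_{-1}$ needs all principal--part coefficients $B_{-1},\dots,B_{-m}$ to be finite rank, so you should cite (or prove) the refined statement, or else fall back on the essential--spectrum argument, which avoids the issue entirely.
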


\begin{proof}
 Suppose that $\dim \mc{M}=\infty$. According to \cite{kato}, p.~239, Theorem 5.28, this implies that $1$ belongs to the essential spectrum $\sigma_e(L)$ of $L$ \footnote{There are several nonequivalent definitions for the essential spectrum of a non self--adjoint operator. We use the one given in \cite{kato}.}.
 However, by \cite{kato}, p.~244, Theorem 5.35, the essential spectrum does not change under a compact perturbation and $L=L_0+L'$ where $L'$ is compact (Lemma \ref{lem:L'compact}).
 Consequently, $L$ and $L_0$ have the same essential spectrum but, according to Corollary \ref{cor:specL0}, $1$ does not belong to the spectrum of $L_0$ and, since $\sigma_e(L_0)\subset \sigma(L_0)$, it cannot belong to $\sigma_e(L_0)$ either, a contradiction. Therefore, the assumption $\dim \mc{M}=\infty$ must be false. 
\end{proof}

As a consequence of Lemma \ref{lem:dimMfinite}, $L_\mc{M}$ is in fact a finite--dimensional operator and we are left with a linear algebra problem.
Since $\sigma(L_\mc{M})=\{1\}$, the spectrum of the finite--dimensional linear operator $1-L_\mc{M}$ consists of $0$ only.
By linear algebra, this implies that $1-L_\mc{M}$ is nilpotent, i.e., there exists an $m \in \mathbb{N}$ such that $(1-L_\mc{M})^m=0$.
The following result is the key to proving $\mc{M}=\langle \mb{g}\rangle$.

\begin{lemma}
 \label{lem:gg*}
 Let $c \in \mathbb{C}\backslash \{0\}$. Then the equation $(1-L)\mb{u}=c\mb{g}$ does not have a solution $\mb{u} \in \mc{D}(L)$. 
\end{lemma}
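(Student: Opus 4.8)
The plan is to argue by contradiction via a Wronskian (solvability) identity. Suppose $\mb{u}\in\mc{D}(L)$ solves $(1-L)\mb{u}=c\mb{g}$ with $c\neq 0$; after rescaling we may take $c=1$. Applying Lemma~\ref{lem:actionL} with $\lambda=1$ and $\mb{f}=\mb{g}$ (note that the term $\frac{\lambda-1}{\rho^2}\int_0^\rho\rho' f_2$ then vanishes), the function
\[ u(\rho):=\frac{1}{\rho^2}\int_0^\rho\rho' u_2(\rho')\,d\rho' \]
lies in $C^2(0,1)$ and satisfies
\[ -(1-\rho^2)u''-2\tfrac{1-\rho^2}{\rho}u'+2\rho u'+2u+\tfrac{V}{\rho^2}u=\tfrac{g_1}{\rho^2}+g_2=:F, \]
whereas a direct computation shows that the gauge mode $g(\rho)=\frac{\rho}{1+\rho^2}$ solves Eq.~\eqref{eq:evode} with $\lambda=1$, i.e.\ the same equation with right--hand side $0$.

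Next I would pass to Sturm--Liouville form by multiplying both equations by $\rho^2$. The left--hand side becomes $-(pu')'+qu$ with $p(\rho)=\rho^2(1-\rho^2)$ and $q(\rho)=V(\rho)+2\rho^2$, and the inhomogeneity becomes $\rho^2F=g_1+\rho^2g_2=\frac{\rho^3(5+\rho^2)}{(1+\rho^2)^2}$. Multiplying the equation for $u$ by $g$, the equation for $g$ by $u$, subtracting, and integrating over $(\varepsilon,1-\varepsilon)$ gives the Lagrange identity
\[ \Big[p(\rho)\big(g(\rho)u'(\rho)-u(\rho)g'(\rho)\big)\Big]_{\varepsilon}^{1-\varepsilon}=-\int_{\varepsilon}^{1-\varepsilon}\rho^2 F(\rho)\,g(\rho)\,d\rho. \]
I then have to check that the boundary term vanishes as $\varepsilon\to0+$. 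Here I use $\mb{u}\in\mc{D}(L)\subset\mc{H}\hookrightarrow C[0,1]\times C[0,1]$ with $\mb{u}(0)=0$ from Lemma~\ref{lem:HinC}: since $u_2$ is continuous on $[0,1]$ with $u_2(0)=0$, the identity $u'=-\frac{2}{\rho}u+\frac{u_2}{\rho}$ shows that $u$ and $u'$ stay bounded near $\rho=1$, so the factor $p(\rho)\sim 2(1-\rho)$ annihilates the boundary term there; near $\rho=0$ one has $\rho^2u(\rho)=\int_0^\rho\rho' u_2\to0$ and $\rho^3u'(\rho)=-2\rho^2u+\rho^2u_2\to0$, which together with $g(\rho)\sim\rho$, $g'$ bounded and $p(\rho)\sim\rho^2$ forces the boundary term to vanish at $0$ as well. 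Letting $\varepsilon\to0+$ therefore yields $\int_0^1\rho^2F(\rho)g(\rho)\,d\rho=0$.

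Finally I would observe that this is impossible: $\rho^2F(\rho)g(\rho)=\frac{\rho^4(5+\rho^2)}{(1+\rho^2)^3}>0$ on $(0,1)$, so the integral is strictly positive, a contradiction. The only delicate point — and the one I expect to require the most care — is the vanishing of the Wronskian boundary terms at the two singular endpoints $\rho=0$ and $\rho=1$; everything else is the routine Fredholm--alternative computation. It is worth stressing that the representation of $u$ through $u_2$ and the mere continuity of $u_1,u_2$ up to the endpoints (rather than membership in the more restrictive $\mc{D}(\tilde{L}_0)$) are precisely what Lemma~\ref{lem:actionL} and Lemma~\ref{lem:HinC} supply, so no additional regularity input is needed.
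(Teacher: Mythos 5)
Your proposal is correct, and it reaches the paper's conclusion by a somewhat different route. The paper also reduces, via Lemma~\ref{lem:actionL} with $\lambda=1$ and $\mb{f}=\mb{g}$, to the inhomogeneous ODE for $u(\rho)=\rho^{-2}\int_0^\rho\rho'u_2$, but it then writes down \emph{both} homogeneous solutions explicitly --- $h_0(\rho)=\frac{\rho}{1+\rho^2}$ and a second solution $h_1$ containing $\rho^{-2}$ and $\log\frac{1-\rho}{1+\rho}$ --- computes their Wronskian, and uses the variation of constants formula; the conditions $u(0)=0$ and $u\in C[0,1]$ (no logarithmic blow up at $\rho=1$) then force the orthogonality relation $\int_0^1\rho^2 h_0(\rho)F(\rho)\,d\rho=0$, which is impossible since the integrand is strictly positive. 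You obtain exactly the same orthogonality relation, but without ever introducing $h_1$: you exploit the fact that at $\lambda=1$ multiplication by $\rho^2$ puts the equation directly into Sturm--Liouville form with $p(\rho)=\rho^2(1-\rho^2)$ (a coincidence special to $\lambda=1$, which is the only case needed here), and you extract the relation from the Lagrange/Green identity against the single known homogeneous solution $g=h_0$, with the boundary terms at the two singular endpoints killed by the vanishing of $p$ there together with the representation $u'=-\frac{2}{\rho}u+\frac{u_2}{\rho}$ and the information $u_2\in C[0,1]$, $u_2(0)=0$ supplied by Lemmas~\ref{lem:actionL} and~\ref{lem:HinC}; I checked these endpoint estimates and they are sound. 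The trade--off: the paper's argument is completely explicit and reads the obstruction off the asymptotics of $h_1$, while yours is more structural (a Fredholm--alternative computation needing only one homogeneous solution and the self--adjoint form), at the price of the careful boundary--term analysis, which you carried out correctly. Both proofs end with the same positivity contradiction, so your argument is a valid alternative proof of the lemma.
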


\begin{proof}
We argue by contradiction. Without loss of generality we may set $c=1$. 
Now suppose there exists an $\mb{u} \in \mc{D}(L)$ such that
$$ (1-L)\mb{u}=\mb{g}. $$
According to Lemma \ref{lem:actionL}, this implies
\begin{equation} 
\label{eq:proofgg*} 
-(1-\rho^2)u''(\rho)-2\frac{1-\rho^2}{\rho}u'(\rho)+2 \rho u'(\rho)+2 u(\rho)+\frac{V(\rho)}{\rho^2}u(\rho)=\frac{g_1(\rho)}{\rho^2}+g_2(\rho)=:g(\rho)
\end{equation}
for all $\rho \in (0,1)$ where 
$$ u(\rho):=\frac{1}{\rho^2}\int_0^\rho \rho' u_2(\rho')d\rho'. $$
As before, de l'H\^opital's rule implies that $u(0)=0$, see the proof of Proposition \ref{prop:specL}, and we also have $u \in C[0,1]$ since $u_2 \in C[0,1]$ (Lemma \ref{lem:HinC}).
Explicitly, the right--hand side of Eq.~\eqref{eq:proofgg*} is given by
$$ g(\rho)=\frac{g_1(\rho)}{\rho^2}+g_2(\rho)=\frac{\rho(5+\rho^2)}{(1+\rho^2)^2}, $$
see Lemma \ref{lem:specL2}.
In particular, we have $g(\rho)>0$ for all $\rho \in (0,1)$.
A direct computation shows that $h_0(\rho):=\frac{\rho}{1+\rho^2}$ and 
$$ h_1(\rho)=\frac{1+9\rho^2+6 \rho^3 \log \frac{1-\rho}{1+\rho}}{3 \rho^2(1+\rho^2)} $$
are solutions of the homogeneous version of Eq.~\eqref{eq:proofgg*}.
Their Wronskian is 
$$ W(h_0, h_1)(\rho)=-\frac{1}{\rho^2(1-\rho^2)} $$
and therefore, 
the variation of constants formula shows that $u$ must be of the form
$$ u(\rho)=c_0 h_0(\rho)+c_1 h_1(\rho)-h_0(\rho)\int_{\rho_0}^\rho \rho'^2 h_1(\rho')g(\rho')d\rho'
+h_1(\rho)\int_{\rho_1}^\rho \rho'^2 h_0(\rho')g(\rho')d\rho' $$
where $c_0, c_1 \in \mathbb{C}$ and $\rho_0, \rho_1 \in [0,1]$ are constants.
The boundary condition $u(0)=0$ implies
$$ c_1=-\int_{\rho_1}^0 \rho'^2 h_0(\rho')g(\rho')d\rho', $$
i.e., the singular terms have to cancel in the limit $\rho \to 0+$ and we are left with
$$  u(\rho)=c_0 h_0(\rho)-h_0(\rho)\int_{\rho_0}^\rho \rho'^2 h_1(\rho')g(\rho')d\rho'
+h_1(\rho)\int_0^\rho \rho'^2 h_0(\rho')g(\rho')d\rho'. $$
Similarly, for $\rho \to 1-$ we have $h_1(\rho) \simeq \log (1-\rho)$ and thus, $h_1 \in L^1(\frac{1}{2},1)$ and we must have 
$$ \int_0^1 \rho'^2 h_0(\rho')g(\rho')d\rho'=0 $$
since $u \in C[0,1]$.
This, however, is impossible since the integrand satisfies $\rho^2 h_0(\rho)g(\rho)>0$ for all $\rho \in (0,1)$.
\end{proof}

Now we can prove the desired result, namely that the unstable subspace is indeed spanned by the gauge mode.

\begin{lemma}
\label{lem:M}
For any $m \in \mathbb{N}$ we have $\ker(1-L_\mc{M})^m=\ker(1-L_\mc{M})$.
As a consequence, the unstable subspace $\mc{M}$ is spanned by the gauge mode, i.e., $\mc{M}=\langle \mb{g} \rangle$, and the algebraic multiplicity of the gauge eigenvalue $1 \in \sigma_p(L)$ is $1$.
\end{lemma}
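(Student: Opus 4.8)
The plan is to establish the identity $\ker(1-L_\mc{M})^m=\ker(1-L_\mc{M})$ by induction on $m$, the case $m=1$ being trivial, and then read off the remaining assertions from finite--dimensional linear algebra together with the spectral theory recalled in Section~\ref{sec:gauge}. For the inductive step, assume the identity for some $m\geq 1$ and let $\mb{u}\in\ker(1-L_\mc{M})^{m+1}$. Since $\mc{M}=P\mc{H}\subset\mc{D}(L)$ (the spectral projection $P$ maps $\mc{H}$ into $\mc{D}(L)$, being an average of resolvents over $\Gamma$, and $PL\subset LP$), the operator $L_\mc{M}$ is a genuine linear map on $\mc{M}$ agreeing with $L$ there, so $\mb{u}\in\mc{D}(L)$ and $(1-L_\mc{M})\mb{u}=(1-L)\mb{u}$. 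By construction $(1-L_\mc{M})\mb{u}\in\ker(1-L_\mc{M})^m=\ker(1-L_\mc{M})\subset\ker(1-L)$, and by Lemma~\ref{lem:specL2} the latter kernel is spanned by $\mb{g}$. Hence there is a constant $c\in\mathbb{C}$ with $(1-L)\mb{u}=c\mb{g}$.

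At this point Lemma~\ref{lem:gg*} applies verbatim: it asserts precisely that $(1-L)\mb{u}=c\mb{g}$ has no solution $\mb{u}\in\mc{D}(L)$ when $c\neq 0$. Therefore $c=0$, i.e.\ $(1-L_\mc{M})\mb{u}=0$, so $\mb{u}\in\ker(1-L_\mc{M})$. This closes the induction and proves $\ker(1-L_\mc{M})^m=\ker(1-L_\mc{M})$ for every $m\in\mathbb{N}$. To conclude, recall from Lemma~\ref{lem:dimMfinite} that $\mc{M}$ is finite--dimensional and that $\sigma(L_\mc{M})=\{1\}$; hence $1-L_\mc{M}$ is nilpotent, $(1-L_\mc{M})^m=0$ for some $m$, and consequently $\mc{M}=\ker(1-L_\mc{M})^m=\ker(1-L_\mc{M})$ by the identity just proved. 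Since $\mb{g}$ is an eigenvector of $L$ for the eigenvalue $1$ we have $P\mb{g}=\mb{g}$ (the spectral projection associated with an isolated eigenvalue fixes its eigenvectors), so $\mb{g}\in\mc{M}$; combining this with $\ker(1-L_\mc{M})\subset\ker(1-L)=\langle\mb{g}\rangle$ from Lemma~\ref{lem:specL2} gives $\mc{M}=\ker(1-L_\mc{M})=\langle\mb{g}\rangle$. In particular $\dim\mc{M}=1$, which is by definition the algebraic multiplicity of the gauge eigenvalue.

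I do not expect a serious obstacle here, since the essential analytic work has already been done in Lemma~\ref{lem:gg*}; the argument is a short bootstrap. The only points that require a little care are bookkeeping: verifying that each $\mb{u}\in\ker(1-L_\mc{M})^{m+1}$ genuinely lies in $\mc{D}(L)$ so that Lemma~\ref{lem:gg*} may be invoked (this is where the inclusion $P\mc{H}\subset\mc{D}(L)$, a consequence of $PL\subset LP$ and the integral representation of $P$, is used), and the identification $P\mb{g}=\mb{g}$, which follows from the Laurent expansion of $R_L$ around $1$ exactly as in the general theory of isolated eigenvalues.
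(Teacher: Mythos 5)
Your proposal is correct and follows essentially the same route as the paper: induction on $m$ with Lemma~\ref{lem:gg*} as the decisive input, nilpotency of $1-L_\mc{M}$ from Lemma~\ref{lem:dimMfinite} and $\sigma(L_\mc{M})=\{1\}$, and then identification of $\mc{M}$ with $\langle\mb{g}\rangle$ via Lemma~\ref{lem:specL2}. The only cosmetic deviations are that you justify $\mc{M}\subset\mc{D}(L)$ through the contour-integral representation of $P$ (the paper uses boundedness of $L_\mc{M}$ together with closedness of $L$) and you conclude $\dim\mc{M}\geq 1$ from $P\mb{g}=\mb{g}$ rather than from $\sigma(L_\mc{M})\neq\emptyset$; both variants are sound.
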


\begin{proof}
 Note first that $\mc{M} \subset \mc{D}(L)$.
 To see this, observe that
 from \cite{kato}, p.~178, Theorem 6.17 it follows that $L_\mc{M} \in \mc{B}(\mc{M})$, i.e., $L_\mc{M}$ is bounded. In our case, this can also be concluded from the fact that $\dim \mc{M}<\infty$.
 Now let $\mb{u} \in \mc{M}$.
 By the density of $\mc{D}(L) \cap \mc{M}$ in $\mc{M}$ (cf.~the proof of Lemma \ref{lem:genSN}), it follows that there exists a sequence $(\mb{u}_j)\subset \mc{D}(L) \cap \mc{M}$ such that $\mb{u}_j \to \mb{u}$ in $\mc{M}$ as $j \to \infty$.
 Since $L_\mc{M}$ is bounded, we conclude that 
 $L\mb{u}_j=L_\mc{M}\mb{u}_j \to L_\mc{M} \mb{u}$
 as $j \to \infty$ and the closedness of $L$ implies that $\mb{u} \in \mc{D}(L)$.
 This proves $\mc{M} \subset \mc{D}(L)$ as claimed.
 
 Next, we prove the assertion $\ker(1-L_\mc{M})^m=\ker(1-L_\mc{M})$ for all $m \in \mathbb{N}$.
 Obviously, this is true for $m=1$ and we proceed by induction.
 Thus, we show that $\ker(1-L_\mc{M})^m=\ker(1-L_\mc{M})$ implies 
$\ker(1-L_\mc{M})^{m+1}=\ker(1-L_\mc{M})$ for an arbitrary $m \in \mathbb{N}$.
Suppose $\mb{u} \in \ker(1-L_\mc{M})^{m+1}$, i.e., $(1-L_\mc{M})^{m+1}\mb{u}=0$.
Setting $\mb{v}:=(1-L_\mc{M})^{m-1} \mb{u}$ we conclude that $(1-L_\mc{M})^2 \mb{v}=0$ and thus, 
$(1-L_\mc{M})\mb{v}$ must be an element of $\ker(1-L_\mc{M})$.
Furthermore, since $\mc{M} \subset \mc{D}(L)$, $\mb{v} \in \mc{D}(L)$ and
by Lemma \ref{lem:specL2} we obtain
$$ (1-L)\mb{v}=(1-L_\mc{M})\mb{v}=c\mb{g} $$
for a $c \in \mathbb{C}$.
However, according to Lemma \ref{lem:gg*}, this is only possible if $c=0$ and therefore, $\mb{v}$ itself belongs to $\ker(1-L_\mc{M})$.
By the definition of $\mb{v}$, this implies $(1-L_\mc{M})^m \mb{u}=0$, i.e., $\mb{u} \in \ker(1-L_\mc{M})^m$.
Invoking the induction hypothesis, we obtain $\mb{u} \in \ker(1-L_\mc{M})$ and thus, we have shown that $\ker (1-L_\mc{M})^{m+1} \subset \ker(1-L_\mc{M})$.
The converse inclusion $\ker(1-L_\mc{M}) \subset \ker(1-L_\mc{M})^{m+1}$ is trivial and we arrive at the claim $\ker(1-L_\mc{M})^m=\ker(1-L_\mc{M})$ for all $m \in \mathbb{N}$.

Finally, we show that $\mc{M}=\langle \mb{g} \rangle$.
To this end, let $\mb{u} \in \mc{M}$ be arbitrary.
Since $(1-L_\mc{M})$ is nilpotent, there exists an $m \in \mathbb{N}$ such that $(1-L_\mc{M})^m \mb{u}=0$, i.e., $\mb{u} \in \ker(1-L_\mc{M})^m$.
By the above, this implies $\mb{u} \in \ker(1-L_\mc{M})$ and invoking Lemma \ref{lem:specL2}, we infer the existence of a $c \in \mathbb{C}$ such that $\mb{u}=c\mb{g}$.
Thus, we have shown that $\mc{M} \subset \langle \mb{g} \rangle$.
Consequently, $\mc{M}$ is at most one--dimensional.
However, $\mc{M}$ cannot be zero--dimensional because this would contradict $\sigma(L_\mc{M})=\{1\}$.
This shows that $\dim \mc{M}=1$, $\mc{M}=\langle \mb{g} \rangle$ and the algebraic multiplicity (which equals $\dim \mc{M}$ by definition) is one.
\end{proof}

\subsection{The time evolution of linear perturbations}
In the previous two sections, we have decomposed the Hilbert space 
$\mc{H}$ into the direct sum $\mc{H}=\mc{M} \oplus \mc{N}$ where
the closed subspaces $\mc{M}$ and $\mc{N}$ are invariant under the time evolution given by the semigroup $S(\tau)$.
The operator $L_\mc{N}$, which acts on the stable subspace $\mc{N}$, has the same spectrum as $L$ except that the unstable gauge eigenvalue $1 \in \sigma_p(L)$ is missing.
Explicitly, we have
$\sigma(L_\mc{N}) \subset \{\lambda \in \mathbb{C}: \Re \lambda \leq \max\{-\frac{1}{2}, s_0\} \}$, cf.~Lemma \ref{lem:specL2}, and the unstable subspace $\mc{M}$ is one--dimensional and spanned by the gauge mode $\mb{g}$.
This is a very satisfactory situation which complies with heuristic expectations.
Our aim in this section is to obtain estimates on the semigroup $S_\mc{N}$ which describes the time evolution on the stable subspace $\mc{N}$.
In other words, we want to translate the spectral bounds we know for $L_\mc{N}$ into semigroup bounds for the associated time evolution.
Since $L_\mc{N}$ is not self--adjoint, this is nontrivial.
Luckily, there exists an abstract machinery which does most of the work for us, the Gearhardt--Pr\"uss--Hwang--Greiner Theorem, see e.g., \cite{engel}, p.~302, Theorem 1.11, or \cite{helffer}.
This theorem asserts that, if we can find an $\omega \in \mathbb{R}$ such that the resolvent $R_{L_\mc{N}}(\lambda)$ is uniformly bounded for all $\lambda \in \mathbb{C}$ with $\Re \lambda \geq \omega$, then the semigroup $S_\mc{N}$ satisfies the growth estimate $\|S_\mc{N}(\tau)\|\lesssim e^{\omega \tau}$ for all $\tau \geq 0$.
Naively, one might expect the estimate $\|S_\mc{N}(\tau)\|\lesssim e^{\max\{-\frac{1}{2}, s_0\}\tau}$ since $\sigma(L_\mc{N}) \subset \{\lambda \in \mathbb{C}: \Re \lambda \leq \max\{-\frac{1}{2}, s_0\} \}$.
Based on the 
Gearhardt--Pr\"uss--Hwang--Greiner Theorem, we will prove that, up to an $\varepsilon$--loss, this is indeed the case.

First, we obtain a bound for the resolvent $R_{L_0}(\lambda)$ of the free operator $L_0$ which is particularly useful far away from the real axis. 
In what follows, we denote by $[R_{L_0}(\lambda)\mb{f}]_j$, $j=1,2$, the individual components of $R_{L_0}(\lambda)\mb{f}$.
Furthermore, recall the definition of the bounded operator $K: H_2 \to H_1$ from the proof of Lemma \ref{lem:L'compact} which is given by
$$ (K u)(\rho)=\int_0^\rho \rho' u(\rho')d\rho'. $$

\begin{lemma}
 \label{lem:res0}
 Let $\varepsilon>0$ be fixed but arbitrary. Then the second component of the 
free resolvent satisfies the bound
 $$ \|K[R_{L_0}(\lambda)\mb{f}]_2\|_1 \lesssim \frac{\|\mb{f}\|}{|\lambda-2|} $$
 for all $\mb{f} \in \mc{H}$ and all $\lambda \in \mathbb{C}$ with $\Re \lambda \geq -\frac{1}{2}+\varepsilon$, $\lambda \not= 2$.
\end{lemma}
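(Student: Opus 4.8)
The plan is to sidestep any analysis of the underlying ordinary differential equation and instead exploit the purely algebraic relation between the two components of $R_{L_0}(\lambda)\mb f$ that is already contained in Lemma \ref{lem:actionL}. Write $\mb u := R_{L_0}(\lambda)\mb f = (u_1,u_2)$, which is well defined for $\Re\lambda > -\tfrac12$ by Corollary \ref{cor:specL0}, and $\mb f = (f_1,f_2)$. Since $\mc D(L_0) = \mc D(L)$ and $L'\mb u = (-V_1 Ku_2,\,0)$, the identity $(\lambda-L_0)\mb u = \mb f$ is equivalent to $(\lambda-L)\mb u = (f_1+V_1 Ku_2,\,f_2)$, so the first formula of Lemma \ref{lem:actionL} --- whose right-hand side involves only the \emph{second} component of the inhomogeneity --- yields
$$ u_1 = \rho^2 u_2 + (\lambda-2)\,Ku_2 - Kf_2 . $$
(One obtains the same relation directly by integrating the second line of the first-order system $(\lambda-L_0)\mb u = \mb f$ and using $u_1(0)=u_2(0)=0$.) Since $\lambda\neq 2$ we may solve for $Ku_2$, and it remains only to bound $\|u_1\|_1$, $\|\rho^2 u_2\|_1$ and $\|Kf_2\|_1$ each by a constant (depending on $\varepsilon$) times $\|\mb f\|$.

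Two of these are immediate. By Corollary \ref{cor:specL0}, $\|u_1\|_1 \le \|\mb u\| = \|R_{L_0}(\lambda)\mb f\| \le (\Re\lambda+\tfrac12)^{-1}\|\mb f\| \le \varepsilon^{-1}\|\mb f\|$; and since $(Kg)'(\rho) = \rho g(\rho)$ one has the isometry $\|Kg\|_1 = \|g\|_{L^2(0,1)}$ for any $g\in L^2(0,1)$ (cf.\ the proof of Lemma \ref{lem:L'compact}), so by \eqref{eq:proofL'bound}, $\|Kf_2\|_1 = \|f_2\|_{L^2(0,1)} \le \|f_2\|_2 \le \|\mb f\|$. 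For the remaining term I would note that $u_2\in H_2$ has an $L^2$ weak derivative and satisfies $u_2(0)=0$, so that $\rho^2 u_2 = K(2u_2+\rho u_2')\in H_1$, and the same isometry together with \eqref{eq:proofL'bound} gives
$$ \|\rho^2 u_2\|_1 = \|2u_2+\rho u_2'\|_{L^2(0,1)} \le 2\|u_2\|_{L^2(0,1)} + \|u_2\|_2 \le 3\|u_2\|_2 \le 3\|\mb u\| \le 3\varepsilon^{-1}\|\mb f\| , $$
where I used $\|\rho u_2'\|_{L^2(0,1)} \le \|u_2'\|_{L^2(0,1)} = \|u_2\|_2$ and $\|u_2\|_{L^2(0,1)} \le \|u_2\|_2$. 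Combining these estimates yields $|\lambda-2|\,\|Ku_2\|_1 = \|(\lambda-2)Ku_2\|_1 \lesssim \|\mb f\|$, which is exactly the claim.

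There is no genuine obstacle here; the only points that require care are (i) reading off the correct $L_0$-version of the component identity from Lemma \ref{lem:actionL}, observing that the compact perturbation $L'$ alters only the first component of the data, which the formula for $u_1$ does not involve, and (ii) the routine check that $\rho^2 u_2$ lies in $H_1$ with the asserted norm bound, which follows from exhibiting it as $K$ applied to an $L^2$ function. The gain of the factor $|\lambda-2|^{-1}$ over the generic resolvent bound $(\Re\lambda+\tfrac12)^{-1}$ of Corollary \ref{cor:specL0} comes entirely from having isolated $(\lambda-2)$ in front of $Ku_2$; in particular the estimate is strong precisely when $|\lambda|$ is large, which is where it will be used.
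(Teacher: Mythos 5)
Your argument is correct and follows essentially the same route as the paper: it uses the component identity $u_1=\rho^2u_2+(\lambda-2)Ku_2-Kf_2$ from Lemma \ref{lem:actionL} (the paper obtains it by ``setting $V_1=0$'', you by absorbing $L'$ into the data, which is the same content), solves for $(\lambda-2)Ku_2$, and bounds $\|u_1\|_1$, $\|\rho^2u_2\|_1$ and $\|Kf_2\|_1$ by $C_\varepsilon\|\mb{f}\|$ exactly as in the paper's proof, with the $\rho^2u_2$ term handled by the same computation $\int_0^1|2u_2+\rho u_2'|^2\,d\rho\lesssim\|u_2\|_2^2$, merely phrased through the isometry of $K$.
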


\begin{proof}
Let $\Re \lambda \geq -\frac{1}{2}+\varepsilon$.
According to Corollary \ref{cor:specL0}, $R_{L_0}(\lambda)$ exists and we have the estimate
$$ \|R_{L_0}(\lambda)\|\leq \frac{1}{\Re \lambda+\frac{1}{2}} $$
which shows that
\begin{equation} 
\label{eq:proofres0}
\|[R_{L_0}(\lambda)\mb{f}]_j\|_j \leq \|R_{L_0}(\lambda)\mb{f}\| \leq \frac{\|\mb{f}\|}{\Re \lambda+\frac{1}{2}} 
\end{equation}
for $j=1,2$ and all $\mb{f} \in \mc{H}$.
 By setting $V_1=0$ in Lemma \ref{lem:actionL}, we see that $(\lambda-L_0)\mb{u}=\mb{f}$ implies
$$ u_1(\rho)=\rho^2 u_2(\rho)+(\lambda-2)\int_0^\rho \rho' u_2(\rho')d\rho'-\int_0^\rho \rho'  f_2(\rho')d\rho' $$
and, in terms of resolvent components, this reads
$$ [R_{L_0}(\lambda)\mb{f}]_1(\rho)=\rho^2 [R_{L_0}(\lambda)\mb{f}]_2(\rho)+(\lambda-2)K[R_{L_0}(\lambda)\mb{f}]_2(\rho)-K f_2(\rho). $$
Consequently, we have
\begin{align*}
 |\lambda-2|^2\|K[R_{L_0}(\lambda)\mb{f}]_2\|_1^2 &\lesssim 
\|[R_{L_0}(\lambda)\mb{f}]_1\|_1^2+\int_0^1 \left |\rho^2 u_2'(\rho)+2\rho u_2(\rho) \right |^2 \frac{d\rho}{\rho^2}+\|Kf_2\|_1^2 \\
&\lesssim \frac{\|\mb{f}\|^2}{|\Re \lambda+\frac{1}{2}|^2}+\int_0^1 \rho^2 |u_2'(\rho)|^2 d\rho+\int_0^1 |u_2(\rho)|^2 d\rho+\|f_2\|_2^2 \\
&\lesssim \|\mb{f}\|^2+\|[R_{L_0}(\lambda)\mb{f}]_2\|_2^2+\|\mb{f}\|^2 \\
&\lesssim \|\mb{f}\|^2
\end{align*}
by Lemma \ref{lem:HinC} and the estimate \eqref{eq:proofres0}.
\end{proof}

Lemma \ref{lem:res0} is crucial in order to obtain estimates for the full resolvent $R_L(\lambda)$ far away from the real axis.

\begin{lemma}
 \label{lem:faraway}
 Let $\varepsilon>0$ be fixed but arbitrary. Assume that $\Re\lambda \geq -\frac{1}{2}+\varepsilon$ and $|\lambda|$ is sufficiently large.
 Then the resolvent $R_L(\lambda)$ exists and it is uniformly bounded as $|\lambda| \to \infty$.
 In particular, the operator $L$ does not have spectrum far away from the real axis in the right half--plane $\{\lambda \in \mathbb{C}: \Re \lambda \geq -\frac{1}{2}+\varepsilon\}$.
\end{lemma}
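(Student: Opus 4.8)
The plan is to exploit the factorization $L=L_0+L'$ together with the identity $\lambda-L=\bigl(1-L'R_{L_0}(\lambda)\bigr)(\lambda-L_0)$, the same one used in the proof of Lemma~\ref{lem:specL}, which is valid whenever $\lambda\in\rho(L_0)$. Since $\Re\lambda\geq-\tfrac12+\varepsilon$ forces $\lambda\in\rho(L_0)$ by Corollary~\ref{cor:specL0}, it suffices to show that $\|L'R_{L_0}(\lambda)\|<1$ once $|\lambda|$ is large enough. Then $1-L'R_{L_0}(\lambda)$ is boundedly invertible via a Neumann series, $\lambda-L$ is a product of two boundedly invertible operators, hence $\lambda\in\rho(L)$, and $R_L(\lambda)=R_{L_0}(\lambda)\bigl(1-L'R_{L_0}(\lambda)\bigr)^{-1}$.

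The decay of $\|L'R_{L_0}(\lambda)\|$ is where Lemma~\ref{lem:res0} enters. Recall from the proof of Lemma~\ref{lem:L'compact} the factorization $L'=MA$ with $A\mb{u}=(Ku_2,0)$ and $M\mb{u}=(-V_1u_1,0)$, where $M$ is bounded on $\mc{H}$. Applying this to $\mb{f}\in\mc{H}$ gives $L'R_{L_0}(\lambda)\mb{f}=M\bigl(K[R_{L_0}(\lambda)\mb{f}]_2,0\bigr)$, so that $\|L'R_{L_0}(\lambda)\mb{f}\|\lesssim\|K[R_{L_0}(\lambda)\mb{f}]_2\|_1$. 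By Lemma~\ref{lem:res0} the right--hand side is $\lesssim\|\mb{f}\|/|\lambda-2|$ uniformly for $\Re\lambda\geq-\tfrac12+\varepsilon$, $\lambda\neq2$. Hence $\|L'R_{L_0}(\lambda)\|\lesssim|\lambda-2|^{-1}\to0$ as $|\lambda|\to\infty$ in the half--plane, and there is a radius $\Lambda=\Lambda(\varepsilon)>0$ with $\|L'R_{L_0}(\lambda)\|\leq\tfrac12$ whenever $\Re\lambda\geq-\tfrac12+\varepsilon$ and $|\lambda|\geq\Lambda$.

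For such $\lambda$ the Neumann argument yields $\lambda\in\rho(L)$ together with
$$ \|R_L(\lambda)\|\leq\|R_{L_0}(\lambda)\|\cdot\bigl(1-\|L'R_{L_0}(\lambda)\|\bigr)^{-1}\leq\frac{2}{\Re\lambda+\frac12}\leq\frac{2}{\varepsilon}, $$
using the resolvent bound from Corollary~\ref{cor:specL0}; this gives both the existence and the claimed uniform boundedness, and in particular the absence of spectrum in $\{\Re\lambda\geq-\tfrac12+\varepsilon,\ |\lambda|\geq\Lambda\}$. Since all the analytic content has been isolated into Lemma~\ref{lem:res0}, the only point requiring care is the bookkeeping of which resolvent component feeds into $L'$: the operator $L'$ sees $R_{L_0}(\lambda)\mb{f}$ only through $K$ applied to its \emph{second} component, which is precisely the quantity controlled in Lemma~\ref{lem:res0}. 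Beyond that identification there is no genuine obstacle — the gain of $|\lambda-2|^{-1}$ does all the work.
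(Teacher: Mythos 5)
Your proposal is correct and follows essentially the same route as the paper: the identity $\lambda-L=(1-L'R_{L_0}(\lambda))(\lambda-L_0)$, the observation that $L'$ acts on $R_{L_0}(\lambda)\mb{f}$ only through $K$ applied to the second component so that Lemma \ref{lem:res0} gives the decay $\lesssim|\lambda-2|^{-1}$, a Neumann series inversion for large $|\lambda|$, and the uniform bound on $R_{L_0}(\lambda)$ from Corollary \ref{cor:specL0}. The only cosmetic difference is that you invoke the factorization $L'=MA$ explicitly where the paper absorbs the boundedness of multiplication by $V_1$ into its estimate; the argument is the same.
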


\begin{proof}
Let $\Re \lambda \geq -\frac{1}{2}+\varepsilon$.
By Corollary \ref{cor:specL0}, we see that $\lambda \in \rho(L_0)$ and therefore, we have the identity
$$ \lambda-L=(1-L'R_{L_0}(\lambda))(\lambda-L_0). $$
Now note that 
$$ L'R_{L_0}(\lambda) \mb{f}=\left ( \begin{array}{c} -V_1 K [R_{L_0}(\lambda)\mb{f}]_2 \\ 0 \end{array} \right ) $$
and thus, Lemma \ref{lem:res0} implies
$$ \|L' R_{L_0}(\lambda)\mb{f}\|=\|V_1 K [R_{L_0}(\lambda) \mb{f}]_2 \|_1 \lesssim \frac{\|\mb{f}\|}{|\lambda-2|} $$
for all $\mb{f} \in \mc{H}$.
This estimate implies that the Neumann series
$$ (1-L'R_{L_0}(\lambda))^{-1}=\sum_{k=0}^\infty (L'R_{L_0}(\lambda))^k $$
converges in the operator norm provided that $|\lambda|$ is sufficiently large.
As a consequence, 
$$ R_L(\lambda)=R_{L_0}(\lambda)(1-L'R_{L_0}(\lambda))^{-1} $$
exists for large $|\lambda|$ and is uniformly bounded as $|\lambda| \to \infty$ since
$$ \|R_{L_0}(\lambda)\|\leq \frac{1}{\mathrm{Re}\lambda+\frac{1}{2}} $$
by Corollary \ref{cor:specL0}.
In particular, there is no spectrum of $L$ far away from the real axis in $\{\lambda \in \mathbb{C}: \mathrm{Re}\lambda \geq -\frac{1}{2}+\varepsilon\}$.
\end{proof}

With these preparations we can prove the crucial estimate for the semigroup on the stable subspace which shows that mode stability of the fundamental self--similar solution implies its linear stability. This result concludes the linear perturbation theory.

\begin{theorem}[Mode stability implies linear stability]
\label{thm:linear}
Fix $\varepsilon>0$.
There exists a projection $P \in \mc{B}(\mc{H})$ onto $\langle \mb{g} \rangle$
which commutes with the semigroup $S(\tau)$ such that
$$ \|S(\tau)P\mb{f}\|=e^\tau \|P\mb{f}\| $$ as well as
$$ \|S(\tau)(1-P)\mb{f}\| \lesssim e^{(\max\{-
\frac{1}{2},s_0\}+\varepsilon)\tau}\|(1-P)\mb{f}\| $$
for all $\tau \geq 0$ and all $\mb{f} \in \mc{H}$
where $s_0$ is the spectral bound, see Definition \ref{def:sb}.
\end{theorem}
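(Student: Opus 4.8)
The plan is to work with the spectral projection $P=\frac{1}{2\pi i}\int_\Gamma R_L(\lambda)\,d\lambda$ constructed in Section~\ref{sec:gauge}, for which Lemma~\ref{lem:M} shows $P\mc{H}=\mc{M}=\langle\mb{g}\rangle$; thus $P$ is a projection onto $\langle\mb{g}\rangle$. Since the resolvents of $L$ commute with one another, $P$ commutes with $R_L(\mu)$ for every $\mu\in\rho(L)$, and hence, by the Post--Widder inversion formula $S(\tau)\mb{f}=\lim_{n\to\infty}[\tfrac{n}{\tau}R_L(\tfrac{n}{\tau})]^n\mb{f}$ already used in the proof of Corollary~\ref{cor:SNS}, $P$ commutes with $S(\tau)$ for all $\tau\ge0$. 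On the one--dimensional subspace $\mc{M}$ the evolution is explicit: $\mb{g}\in\mc{D}(L)$ with $L\mb{g}=\mb{g}$, so $\tau\mapsto S(\tau)\mb{g}$ solves $\tfrac{d}{d\tau}S(\tau)\mb{g}=S(\tau)\mb{g}$ and therefore $S(\tau)\mb{g}=e^\tau\mb{g}$; since $P\mb{f}\in\langle\mb{g}\rangle$ this yields $\|S(\tau)P\mb{f}\|=e^\tau\|P\mb{f}\|$.

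The essential part is the estimate on the stable subspace $\mc{N}=(1-P)\mc{H}$. By Corollary~\ref{cor:SNS}, $S(\tau)|_\mc{N}=S_\mc{N}(\tau)$, so it suffices to prove $\|S_\mc{N}(\tau)\|\lesssim e^{\omega\tau}$ with $\omega:=\max\{-\tfrac12,s_0\}+\varepsilon$. For this I would invoke the Gearhardt--Pr\"uss--Hwang--Greiner Theorem (\cite{engel}, p.~302, Theorem~1.11), which reduces the claim to producing a uniform bound $\sup\{\|R_{L_\mc{N}}(\lambda)\|:\Re\lambda\ge\omega\}<\infty$. This uniform bound is assembled from three facts. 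First, $\sigma(L_\mc{N})=\sigma(L)\setminus\{1\}$ (via \cite{kato}, p.~178, Theorem~6.17) is contained in $\{\Re\lambda\le\max\{-\tfrac12,s_0\}\}$ by Lemma~\ref{lem:specL2}; hence the closed half--plane $\{\Re\lambda\ge\omega\}$ lies in $\rho(L_\mc{N})$ and $\lambda\mapsto R_{L_\mc{N}}(\lambda)$ is holomorphic, in particular norm--continuous, there. Second, on the compact set $\{\Re\lambda\ge\omega,\ |\lambda|\le R_0\}$ this norm--continuity gives a bound for any fixed $R_0$. Third, for $|\lambda|\ge R_0$ with $R_0$ large, note $\omega\ge-\tfrac12+\varepsilon$, so Lemma~\ref{lem:faraway} guarantees that $R_L(\lambda)$ exists and is uniformly bounded as $|\lambda|\to\infty$ on $\{\Re\lambda\ge-\tfrac12+\varepsilon\}$; since $\mc{N}$ is $R_L(\lambda)$--invariant ($P$ commutes with $R_L(\lambda)$), for $\mb{f}\in\mc{N}$ the element $\mb{u}=R_L(\lambda)\mb{f}$ lies in $\mc{D}(L)\cap\mc{N}=\mc{D}(L_\mc{N})$ and satisfies $(\lambda-L_\mc{N})\mb{u}=\mb{f}$, whence $R_{L_\mc{N}}(\lambda)=R_L(\lambda)|_\mc{N}$ and the same uniform bound is inherited. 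Together the compact part and the far--field part give the required bound on all of $\{\Re\lambda\ge\omega\}$.

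The Gearhardt--Pr\"uss--Hwang--Greiner Theorem then delivers $\|S_\mc{N}(\tau)\|\lesssim e^{\omega\tau}$ for all $\tau\ge0$, and consequently $\|S(\tau)(1-P)\mb{f}\|=\|S_\mc{N}(\tau)(1-P)\mb{f}\|\lesssim e^{(\max\{-\frac12,s_0\}+\varepsilon)\tau}\|(1-P)\mb{f}\|$, which is the asserted decay estimate. The step I expect to be the crux is establishing the uniform resolvent bound on the closed half--plane $\{\Re\lambda\ge\omega\}$: the analytic content has already been distilled into Lemmas~\ref{lem:specL2}, \ref{lem:res0} and \ref{lem:faraway}, so the remaining task is the careful patching of the far--field estimate to the bounded region via holomorphy of the resolvent, making sure throughout that one works with $L_\mc{N}$ rather than $L$, so that the artificial gauge eigenvalue $\lambda=1$ is removed from the spectrum and does not obstruct the bound.
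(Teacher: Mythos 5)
Your proposal is correct and follows essentially the same route as the paper: the eigenvalue relation $L\mb{g}=\mb{g}$ gives the exact growth $e^\tau$ on $\mc{M}=\langle\mb{g}\rangle$, and the stable-subspace bound is obtained from the Gearhardt--Pr\"uss--Hwang--Greiner Theorem via the spectral inclusion of Lemma \ref{lem:specL2} together with the far-field resolvent bound of Lemma \ref{lem:faraway} applied to $R_{L_\mc{N}}(\lambda)=R_L(\lambda)|_\mc{N}$. The only difference is that you spell out explicitly the patching of the far-field estimate with norm-continuity of the resolvent on the remaining compact region (and the Post--Widder argument for $PS(\tau)=S(\tau)P$), details the paper leaves implicit.
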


\begin{proof}
The existence of $P$ and the fact that $P$ commutes with $S(\tau)$ have already been discussed in Sec.~\ref{sec:gauge}.
Furthermore, by Lemma \ref{lem:M}, the unstable subspace $\mc{M}=P\mc{H}$ is spanned by the gauge mode $\mb{g}$.
Thus, for any $\mb{f} \in \mc{H}$, there exists a constant $c(\mb{f}) \in \mathbb{C}$, depending on $\mb{f}$, such that $P\mb{f}=c(\mb{f})\mb{g}$.
Consequently, $|c(\mb{f})|=\frac{\|P\mb{f}\|}{\|\mb{g}\|}$ and applying the time evolution yields
$$ \|S(\tau)P\mb{f}\|=|c(\mb{f})|\|S(\tau)\mb{g}\|=
e^{\tau}|c(\mb{f})|\|\mb{g}\|=e^{\tau}\|P\mb{f}\| $$
since $\mb{g}$ is an eigenfunction of the generator $L$ with eigenvalue $1$, see Lemma \ref{lem:specL2}.

In order to prove the estimate on the stable subspace, note that the statement from Lemma \ref{lem:faraway} also holds true for the operator $L_\mc{N}$ since $R_{L_\mc{N}}(\lambda)$ is the restriction of $R_L(\lambda)$ to the stable subspace $\mc{N}=(1-P)\mc{H}$.
As already mentioned, the spectrum of $L_\mc{N}$ satisfies $\sigma(L_\mc{N})\subset \{\lambda \in \mathbb{C}: \mathrm{Re}\lambda\leq \max\{-\frac{1}{2},s_0\}\}$.
This and Lemma \ref{lem:faraway} imply that the resolvent $R_{L_\mc{N}}(\lambda)$ is uniformly bounded for $\Re \lambda \geq \max\{-\frac{1}{2},s_0\}+\varepsilon$, i.e., there exists a constant $C>0$ such that
$$ \|R_{L_\mc{N}}(\lambda)\|\leq C $$
for all $\lambda \in \mathbb{C}$ with $\Re \lambda \geq \max\{-\frac{1}{2},s_0\}+\varepsilon$.
Thus, the Gearhardt--Pr\"uss--Hwang--Greiner Theorem (\cite{engel}, p.~302, Theorem 1.11, see also the recent \cite{helffer}) implies the claim.
\end{proof}

\bibliography{wmlin}{}
\bibliographystyle{plain}

\end{document}